\newtheorem{thm}{Theorem}[section]
\newtheorem{cor}[thm]{Corollary}
\newtheorem{lem}[thm]{Lemma}
\newtheorem{prop}[thm]{Proposition}
\newtheorem{conj}[thm]{Conjecture}
\theoremstyle{definition}
\newtheorem{df}{Definition}
\newtheorem{rem}{Remark}
\newtheorem{ex}{Example}
\newtheorem{cex}{Counterexample}
          \newcommand\im{\mathrm{Im}}
          \newcommand\Pic{\mathrm{Pic}}
          \newcommand\W{\mathcal W}
          \newcommand\oo{\mathcal O}
          \newcommand\Ext{\mathrm{Ext}}
          \newcommand\Hom{\mathrm{Hom}}
          \newcommand\Cliff{\mathrm{Cliff}}
          \newcommand\rk{\mathrm{rk}}
\newcounter{appendice}
\def\theappendice{{\normalsize\Alph{appendice}}}
\newcommand{\appendice}[1]
{
\refstepcounter{appendice}
\def\theequation{\theappendice.\arabic{equation}}
\def\thesection{{\normalsize\Alph{appendice}}}
\bigskip\bigskip\noindent
\begin{center}
{\Large\textbf{Appendix.\ #1}}
\end{center}
\par\bigskip\nopagebreak
}
\begin{document}

\title[Generalized Lazarsfeld-Mukai bundles and a conjecture of Donagi and Morrison]{Generalized Lazarsfeld-Mukai bundles and a conjecture of Donagi and Morrison}
\author{Margherita Lelli--Chiesa}
\dedicatory{ (with an Appendix joint with \textsc{Andreas Leopold Knutsen})}
\address{Max Planck Institut for Mathematics, 53111 Bonn}
\email{lelli@mpim-bonn.mpg.de}
\begin{abstract}
Let $S$ be a $K3$ surface and assume for simplicity that it does not contain any ($-2$)-curve. Using coherent systems, we express every non-simple Lazarsfeld-Mukai bundle on $S$ as an extension of two sheaves of some special type, that we refer to as {\em generalized Lazarsfeld-Mukai bundles}. This has interesting consequences concerning the Brill-Noether theory of curves $C$ lying on $S$. From now on, let $g$ denote the genus of $C$ and $A$ be a complete linear series of type $g^r_d$ on $C$ such that $d\leq g-1$ and the corresponding  Brill-Noether number is negative. First, we focus on the cases where $A$ computes the Clifford index; if $r>1$ and with only some completely classified exceptions, we show that $A$ coincides with the restriction to $C$ of a line bundle on $S$. This is a refinement of Green and Lazarsfeld's result on the constancy of the Clifford index of curves moving in the same linear system. Then, we study a conjecture of Donagi and Morrison predicting that, under no hypothesis on its Clifford index, $A$ is contained in a $g^s_e$ which is cut out from a line bundle on $S$ and satisfies $e\leq g-1$. We provide counterexamples to the last inequality already for $r=2$. A slight modification of the conjecture, which holds for $r=1,2$, is proved under some hypotheses on the pair $(C,A)$ and its deformations. We show that the result is optimal (in the sense that our hypotheses cannot be avoided) by exhibiting, in the Appendix, some counterexamples obtained jointly with Andreas Leopold Knutsen.
\end{abstract}

\maketitle
\section{Introduction}
The use of Lazarsfeld-Mukai bundles (LM bundles for short) in the study of the Brill-Noether theory of curves $C$ lying on a $K3$ surface $S$ has brought several achievements, such as a new proof of the Gieseker-Petri Theorem \cite{lazarsfeld}, the classification of prime Fano manifolds of coindex $3$ \cite{mukai}, Green's Conjecture for a general curve of any given genus $g$ \cite{voisin1,voisin2}. More recently, there have been applications to higher rank Brill-Noether theory (cf. \cite{angela}) and hyperk\"ahler manifolds (cf. \cite{ciro}). The key observation is that, if $C$ is general in its linear system, non-trivial endomorphisms of the rank $r+1$ LM bundle $E_ {C,(A,V)}$ associated with a linear series $(A,V)\in G^r_d(C)$ measure the failure of injectivity of the Petri map
$$
\mu_{0,(A,V)}:V\otimes H^0(C,\omega_C\otimes A^\vee)\to H^0(C,\omega_C);
$$
if $\rho(g,r,d)<0$ with $g$ equal to the genus of $C$, then the hypothesis on the genericity of $C$ is not necessary. For later use, we recall that in the case of a complete $g^r_d$ on $C$, that is, when $V=H^0(C,A)$, the corresponding LM bundle is denoted by $E_{C,A}$. 

The study of non-simple LM bundles turns out to be crucial. If $r=1$, these arise as extensions of two torsion free sheaves of rank $1$ on $S$ which are the image and the kernel of an endomorphism dropping the rank everywhere (\cite{donagi}). Non-simple LM bundles of rank $3$ (i.e., satisfying $r=2$) were investigated in \cite{ciotola} by using the fact that   they cannot be stable and looking at the corresponding Harder-Narasimhan and Jordan-H\"older filtrations. 

Our idea in order to treat non-simple LM bundles $E_{C,A}$ of arbitrary rank is to consider the pair $(E_{C,A},H^0(S,E_{C,A}))$ as a coherent system à la Le Potier \cite{lepotier}; if $E_{C,A }$ is non-simple, then the same holds for the pair $(E_{C,A},H^0(S,E_{C,A}))$. The notion of stability for coherent systems depends on the choice of a polynomial $\alpha\in \mathbb{Q}[t]$ with positive leading coefficient, and non-simplicity implies the non-stability with respect to every $\alpha$. Having fixed $\alpha$ equal to the Hilbert polynomial $p_S$ of $\oo_S$, we consider the maximal destabilizing sequence of $(E_{C,A},H^0(S,E_{C,A}))$.

A first application of our methods concerns complete linear series $A$ of type $g^r_d$ on a smooth curve $C\subset S$, whenever $A$ computes the Clifford index of $C$ and $\rho(g,r,d)<0$. Set $L:=\oo_S(C)$; in order to state our result, we recall that a line bundle $M\in\Pic(S)$ is \emph{adapted to the linear system $\vert L\vert$} exactly when:
\begin{enumerate}
\item $h^0(S,M)\geq 2$ and $h^0(S,L\otimes M^\vee)\geq 2$;
\item $\Cliff(M\otimes\oo_C)$ is independent of the curve $C\in\vert L\vert$.
\end{enumerate}
Condition (1) makes sure that $M\otimes \oo_C$ contributes to the Clifford index, while (2) is satisfied if either $h^1(S,M)=0$, or $h^1(S,L\otimes M^\vee)=0$.
 
We prove the following:
\begin{thm}\label{mannaggia}
Let $A$ be a complete $g^r_d$ on a non-hyperelliptic and non-trigonal curve $C\subset S$ such that $d\leq g-1$, $\rho(g,r,d)<0$ and $\Cliff(A)=\Cliff(C)$. Assume $L:=\oo_S(C)$ is ample and the following condition is satisfied:
\begin{itemize}
\item[(*)] there is no irreducible elliptic curve $\Sigma\subset S$ such that $\Sigma\cdot C=4$ and no irreducible genus $2$ curve $B\subset S$ such that $B\cdot C=6$.
\end{itemize}
Then, one of the following occurs:
\begin{itemize}
\item[(i)] There exists a line bundle $M\in\Pic(S)$ adapted to $\vert L\vert$ such that $A\simeq M\otimes\oo_C$.
\item[(ii)] The line bundle $A$ satisfies $h^0(C,A)=2$ (i.e., $r=1$); furthermore, there exists a line bundle $M\in\Pic(S)$ adapted to $\vert L\vert$ such that $\vert A\vert$ is contained in the restriction of $\vert M\vert$ to $C$.
\end{itemize}
\end{thm}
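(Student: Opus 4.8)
The plan is to run the coherent-systems machinery on the rank $r+1$ Lazarsfeld-Mukai bundle $E:=E_{C,A}$ and then read off the conclusion on $C$ by adjunction. First I would record that the hypothesis $\rho(g,r,d)<0$ forces $E$ to be non-simple. Writing $v(E)=(r+1,\,L,\,r+1+(g-1)-d)$ for the Mukai vector and using the Mukai pairing together with Serre duality on the $K3$ surface (so that $\dim\Ext^2(E,E)=\dim\Hom(E,E)$), one computes $\chi(E,E)=2-2\rho(g,r,d)$; since $\dim\Ext^1(E,E)\geq 0$, this gives $\dim\Hom(E,E)\geq 1-\rho(g,r,d)>1$, so $E$ is non-simple and the pair $(E,H^0(S,E))$ is an unstable coherent system with respect to every stability parameter, in particular $\alpha=p_S$.

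Next I would invoke the structural result of the paper: taking the maximal destabilizing subsystem $(\F,\W)$ of $(E,H^0(S,E))$ produces a short exact sequence $0\to\F\to E\to\G\to 0$ in which $\F$ and the torsion-free sheaf attached to $\G$ are generalized Lazarsfeld-Mukai bundles, with $\W$ generating $\F$ away from a finite set. Set $M:=\det\F$. Since $\det E=\det\F\otimes\det\G$ and $\det E=L$, we have $\det\G=L\otimes M^\vee$; the generalized-LM structure of $\F$ and $\G$ is then exactly what supplies $h^0(S,M)\geq 2$ and $h^0(S,L\otimes M^\vee)\geq 2$, i.e. condition (1) of adaptedness, while the cohomological vanishing built into these bundles delivers $h^1(S,M)=0$ or $h^1(S,L\otimes M^\vee)=0$, i.e. condition (2). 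Thus $M$ will be adapted to $\vert L\vert$ once the generalized-LM bookkeeping is carried out.

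The heart of the argument is to transfer this sequence to $C$. Because $C\in\vert L\vert$ on a $K3$, adjunction gives $L\otimes\oo_C=\omega_C$, so $M\otimes\oo_C$ and $(L\otimes M^\vee)\otimes\oo_C=\omega_C\otimes(M\otimes\oo_C)^\vee$ are Serre-dual partners on $C$. Restricting $0\to\F\to E\to\G\to 0$ to $C$ and using the standard identification of $E_{C,A}\otimes\oo_C$ with an extension involving $A$ and $\omega_C\otimes A^\vee$, I would match the sub- and quotient pieces to obtain a sub-line-bundle, or a subseries, of $A$ arising from $M\otimes\oo_C$. The Clifford-index inequality $\Cliff(M\otimes\oo_C)\geq\Cliff(C)=\Cliff(A)$, combined with the reverse inequality forced by the minimality of $(\F,\W)$, should then collapse to an equality that identifies $A$ with $M\otimes\oo_C$ when $r>1$, giving case (i); when the destabilizing data only produces a pencil one lands in $r=1$ and obtains the weaker containment of $\vert A\vert$ in the restriction of $\vert M\vert$, which is case (ii).

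The main obstacle I anticipate is precisely this Clifford-index comparison together with the rigidity needed to upgrade ``$M\otimes\oo_C$ computes the Clifford index'' to ``$A\simeq M\otimes\oo_C$'': the destabilizing quotient could a priori restrict to a series of strictly smaller slope, and the numerical invariants ($\rk$, $c_2$, and $h^0$ of $\F$ and $\G$) must be pinned down to exclude this. Exactly here the hypothesis $(*)$ enters: the borderline configurations in which the equality fails to yield (i) or (ii) correspond to a generalized-LM subsheaf whose determinant restricts to a low Clifford-index series arising from an irreducible elliptic curve $\Sigma$ with $\Sigma\cdot C=4$ or from an irreducible genus $2$ curve $B$ with $B\cdot C=6$; ruling these out by $(*)$, together with the non-hyperelliptic and non-trigonal assumptions that discard the lowest Clifford indices, leaves only the two stated alternatives.
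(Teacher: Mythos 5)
Your outline does follow the paper's strategy (destabilize the coherent system $(E,H^0(E))$, recognize the two pieces as generalized LM bundles, compare Clifford indices, and let (*) kill the exceptional configurations), but two points are genuinely wrong or missing as written. The first is the identification of $M$: it is backwards. If $N\hookrightarrow E_{C,A}$ is a subsheaf with $h^0(N)\geq 2$, the composition $N\to E_{C,A}\to\omega_C\otimes A^\vee$ is nonzero (a zero composite would force a nonzero map $N\to\oo_S$), and adjunction $\omega_C=L\otimes\oo_C$ turns this into $h^0(A^\vee\otimes(L\otimes N^\vee)\otimes\oo_C)>0$: it is the \emph{quotient} determinant $\det\G=L\otimes\det\F^\vee$ whose restriction contains $\vert A\vert$, not $\det\F$ (this is Remark \ref{ice}). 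Likewise, case (i) comes from Lemma \ref{estate}: $A\simeq\det E_2\otimes\oo_C$ holds precisely when the \emph{quotient} is $E_{C_2,\omega_{C_2}}$ and the sub is a line bundle. The sub can never be the big piece, because $h^0(E_{C_1,\omega_{C_1}})/\rk=(g_1+1)/g_1<2$ while a destabilizing sub must satisfy $h^0/\rk\geq h^0(E)/(r+1)\geq 2$ --- and this last inequality is exactly where the hypothesis $d\leq g-1$ enters, which your sketch never uses. So with $M:=\det\F$ the asserted conclusion $A\simeq M\otimes\oo_C$ is false; you must take $M:=\det\G$.

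The second, larger gap: adaptedness of $M$ and the ``collapse'' of the Clifford inequality are not formal consequences of the g.LM structure or of maximality; they constitute the bulk of the paper's proof. The g.LM property gives $h^2=0$, not $h^1(\det)=0$: when the determinant of either piece has self-intersection zero one has $h^1(\oo_S(k\Sigma))=k-1$, and ruling out $k\geq 2$ (Steps I--III of the paper) requires delicate comparisons with $\Cliff(\oo_C(\Sigma))$ that use the non-hyperelliptic/non-trigonal hypotheses through $\Sigma\cdot C\geq 4$. The reverse Clifford inequality is not ``forced by minimality'': it is extracted from a Riemann--Roch computation on the torsion sheaf $T$ measuring the difference between the destabilizing sub and its g.LM modification, which first forces $T=0$ (Step IV) and then $\Cliff(E_1)=\Cliff(E_2)=0$; after that it is the classification of Clifford-index-zero g.LM bundles (Corollary \ref{index}) --- not slope or degree bookkeeping --- that identifies $E_2=E_{C_2,\omega_{C_2}}$ (case (i)) or both pieces as line bundles (case (ii)), and locates the exceptions exactly at $E_1=E_{C_1,(r_1-1)g^1_2}$ with $C_1$ hyperelliptic, which condition (*) excludes via Saint-Donat's classification of hyperelliptic linear systems. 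Finally, to run that classification one needs the quotient system $(E_2,V_2)$ to be \emph{stable}, i.e. a minimal destabilizing sequence as in Remark \ref{minimo} rather than the maximal destabilizing subsystem you chose: stability of $(E_2,V_2)$ is what rules out the hyperelliptic-type and split possibilities for $E_2$.
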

In particular, as soon as $d\leq g-1$ and $r>1$, the line bundle $A$ coincides with the restriction to $C$ of a line bundle $M\in\Pic(S)$. In case (ii), the condition that $\vert A\vert$ is contained in the restriction of $\vert M\vert$ to $C$ means that for every $A_0\in\vert A\vert$ there is a divisor $M_0\in \vert M\vert$ such that $A_0\subset M_0\cap C$; if $H^0(S,M)\simeq H^0(C,M\otimes\oo_C)$, this is equivalent to the requirement $h^0(C,A^\vee\otimes M\otimes \oo_C)>0$. 

If condition (*) is violated, there do exist exceptions where neither (i) nor (ii) happen; we actually prove a stronger version of Theorem \ref{mannaggia} (cf. Theorem \ref{duro}), which also covers and completely classifies such exceptional cases. Note that our result makes no assumption on the Clifford index of the curve $C$. As soon as $L:=\oo_S(C)$ is ample, Theorem \ref{mannaggia} can be seen as a refinement of Green and Lazarsfeld's result that all smooth curves in $\vert L\vert$ have the same Clifford index (cf. \cite{green}).

 The exceptional cases mentioned above provide counterexamples to the following conjecture of Donagi and Morrison:
\begin{conj}[\cite{donagi} Conjecture 1.2]\label{falsa}
Let $C$ be a smooth curve of genus $g\geq 2$ lying on a $K3$ surface $S$ and let $A$ be a complete, base point free $g^r_d$ on $C$ such that $d\leq g-1$ and $\rho(g,r,d)<0$. Then, there exists a line bundle $M\in\Pic(S)$ such that $\vert A\vert$ is contained in the restriction of $\vert M\vert$ to $C$ and the following inequalities are satisfied:
$$
c_1(M)\cdot C\leq g-1,\,\,\,\,\,\,\Cliff(M\otimes\oo_C)\leq \Cliff(A).
$$
\end{conj}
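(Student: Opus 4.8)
The plan is to run the Lazarsfeld--Mukai formalism developed above directly on the complete, base point free series $A$. First I would form the rank $r+1$ bundle $E:=E_{C,A}$, defined by the exact sequence $0\to E^\vee\to H^0(C,A)\otimes\oo_S\to \iota_*A\to 0$, where $\iota\colon C\hookrightarrow S$ is the inclusion and base point freeness guarantees surjectivity of the evaluation map. A Riemann--Roch computation gives $\det E=L:=\oo_S(C)$ and $c_2(E)=d$, while the Mukai pairing yields $\chi(E,E)=2-2\rho(g,r,d)$. Since $S$ is a $K3$ surface, Serre duality gives $\Ext^2(E,E)\cong\Hom(E,E)^\vee$, so $2\dim\Hom(E,E)-\dim\Ext^1(E,E)=2-2\rho(g,r,d)$; the hypothesis $\rho(g,r,d)<0$ then forces $\dim\Hom(E,E)\geq 2$, i.e. $E$ is non-simple. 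This non-simplicity is precisely the input that activates the coherent-system method.

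Next I would regard $(E,H^0(S,E))$ as a coherent system and, taking $\alpha=p_S$, pass to its maximal destabilizing subsystem $(\F,\Gamma)$; non-simplicity guarantees that this subsystem is proper, and $\F$ is a generalized Lazarsfeld--Mukai sheaf globally generated by $\Gamma$, fitting into $0\to\F\to E\to\G\to 0$. From $\F$ (or dually from $\G$) I would extract the candidate $M\in\Pic(S)$: in the classical rank-two case $r=1$ this is just the saturation of the image of a rank-dropping endomorphism, while in general $M$ is built from the determinant of the destabilizing piece. The sections in $\Gamma$ should restrict to $C$ so as to exhibit $|A|$ inside the restriction of $|M|$, equivalently $h^0(C,A^\vee\otimes M\otimes\oo_C)>0$, which is the first required conclusion.

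The inequality $\Cliff(M\otimes\oo_C)\leq\Cliff(A)$ I would expect to follow from the standard comparison of Clifford indices along the destabilizing sequence, since $M\otimes\oo_C$ refines $A$ and inherits controlled $h^0$ and $h^1$. The hard part will be the degree inequality $c_1(M)\cdot C\leq g-1$. The $\alpha$-destabilization only bounds the reduced Hilbert polynomial, hence the ratio $(c_1(\F)\cdot C)/\rk\F$, and passing to the single line bundle $M$ discards the rank factor, so there is no structural reason the resulting $e:=c_1(M)\cdot C$ should stay below $g-1$. Indeed the exceptional configurations ruled out by condition (*) of Theorem~\ref{mannaggia} --- an irreducible elliptic curve $\Sigma$ with $\Sigma\cdot C=4$, or an irreducible genus-$2$ curve $B$ with $B\cdot C=6$ --- are exactly the geometries in which $M$ is forced to be so positive that $e$ overshoots $g-1$, already for $r=2$. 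I therefore expect the degree bound, as worded, to be the step that resists proof --- in fact to be false --- while the containment and the Clifford inequality survive; the realistic target is the same statement with the inequality $e\leq g-1$ suitably relaxed.
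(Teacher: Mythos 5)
You have reached the paper's own verdict: this statement is never proved in the paper, it is \emph{refuted}, and your closing paragraph is exactly the diagnosis the paper makes. The paper's treatment of Conjecture \ref{falsa} is Counterexample \ref{fortuna}, which realizes precisely the geometry you flagged via condition (*) of Theorem \ref{mannaggia}: take $S$ a double cover of $\mathbb{P}^2$ with $\Pic(S)=\mathbb{Z}[B]$, $B^2=2$, so that $B$ is a genus-$2$ class with $B\cdot C=6$ for $C\in\vert 3B\vert$, and set $E:=\oo_S(B)^{\oplus 3}$. Since $E$ is globally generated with $h^1(E)=h^2(E)=0$, it is the LM bundle $E_{C,A}$ of a primitive $g^2_6$ on some smooth $C\in\vert 3B\vert$ of genus $g=10$, with $d=6\leq g-1=9$ and $\rho(10,2,6)<0$. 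As $\Pic(S)=\mathbb{Z}[B]$, the only candidates for $M$ are the multiples $kB$; by Remark \ref{ice} one has $\vert A\vert\subset\vert\oo_C(2B)\vert$, but $2B\cdot C=12>g-1$, and $\vert A\vert\not\subset\vert\oo_C(B)\vert$ because $A\simeq\oo_C(B)$ would force, via Lemma \ref{estate}, a nonzero map $\oo_S(2B)\to E$, which does not exist. (Concretely: $C$ double-covers an elliptic curve $\Gamma$ and $A$ is the pullback of $\oo_\Gamma(P_1+P_2+P_3)$ with the $P_i$ non-collinear.) So where you predicted that the degree bound "resists proof --- in fact is false," the paper supplies the witness, and more generally shows that all the exceptional cases (iii)--(ix) of Theorem \ref{duro} yield counterexamples with $c_1(M)\cdot C>g-1$.

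Two corrections to the rest of your proposal. First, your claim that the containment and the Clifford inequality "survive" is too optimistic as an unconditional statement: they survive exactly as the modified Conjecture \ref{dm}, which the paper proves only under hypotheses (for series computing the Clifford index in Theorem \ref{duro}, and under the no-unexpected-secant-varieties assumption in Theorem \ref{chiana}), and which the Appendix shows to \emph{fail} in general --- there, with $\Pic(S)=\mathbb{Z}[H]$ and $E_{C,A}=E\oplus E$ for $C\in\vert 2H\vert$, one computes $h^0(A^\vee\otimes\oo_C(H))=2h^0(E(-H))=0$, so even part (i), the containment itself, has no valid $M$. Second, a technical point in your destabilization step: the maximal destabilizing piece $\F=E_1$ of $(E,H^0(E))$ is in general only \emph{generically} generated by its sections (Lemma \ref{pizza}), not globally generated as you assert; controlling the one-dimensional locus where generation fails --- via Proposition \ref{generated}, elementary modifications of g.LM bundles of type (I), and the ($-2$)-curve dichotomy --- is where much of the actual work in Theorems \ref{duro} and \ref{chiana} lies, so any positive argument built on your outline would need that machinery before $\det$ of the destabilizing piece can produce the line bundle $M$.
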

\noindent We describe one counterexample explicitly (cf. Counterexample \ref{fortuna}). Let $\pi:S\to \mathbb{P}^2$ be a $2$:$1$ cover branched along a smooth sextic and assume that $B:=\pi^*\oo_{\mathbb{P}^2}(1)$ generates the Picard group of $S$. A curve $C\in\pi^*\vert \oo_{\mathbb{P}^2}(3)\vert\subset \vert 3B\vert$ is a $2$:$1$ cover of an elliptic curve $\Gamma$; let $A$ be the pullback to $C$ of $\oo_\Gamma(P_1+P_2+P_3)$, where $P_1$, $P_2$, $P_3$ are three non-collinear points. One may show that $A$ is a complete $g^2_6$ on $C$ and the linear system $\vert A\vert$ is contained in $\vert \oo_C(2B)\vert$ but not in $\vert\oo_C(B)\vert$; since $2B\cdot C>g-1$, Conjecture \ref{falsa} fails. 

Notice that the above counterexample does not contradict the existence of a line bundle $M\in\Pic(S)$ such that the linear system $\vert A\vert$ is contained in the restriction of $\vert M\vert$ to $C$, but only the inequality $c_1(M)\cdot C\leq g-1$. In fact, one might still believe the following modification of the conjecture:   
\begin{conj}\label{dm}
Let $C$ be a smooth irreducible curve of genus $g\geq 2$ lying on a $K3$ surface $S$ and $A$ be a complete, base point free $g^r_d$ on $C$ such that $d\leq g-1$ and $\rho(g,r,d)<0$. Then, there exists a line bundle $M\in\Pic(S)$, adapted to $\vert L\vert$, such that:
 \begin{itemize}
 \item[\em (i)] $\vert A\vert$ is contained in the restriction of $\vert M\vert$ to $C$;
 \item[\em (ii)] $\Cliff(M\otimes\oo_C)\leq \Cliff(A)$. 
 \end{itemize}
\end{conj}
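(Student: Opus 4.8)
The plan is to attach to $A$ its generalized Lazarsfeld--Mukai bundle and to read off $M$ from the canonical destabilization of the associated coherent system. As a preliminary remark, in the sub-case where $A$ computes $\Cliff(C)$ (and condition (*) of Theorem \ref{mannaggia} holds) an adapted $M$ with $A\simeq M\otimes\oo_C$, hence with equality in (ii), is already provided by Theorem \ref{mannaggia}; the real content is therefore the general case, which I would approach as follows. First I would form the rank-$(r+1)$ bundle $E:=E_{C,A}$, recording $\det E=L$, $c_2(E)=d$ and $h^0(S,E)=g+2r+1-d$. The hypothesis $\rho(g,r,d)<0$ is, after a dimension count, precisely the statement that the domain $H^0(C,A)\otimes H^0(C,\omega_C\otimes A^\vee)$ of the Petri map $\mu_{0,A}$, of dimension $(r+1)(g-d+r)$, is larger than its target $H^0(C,\omega_C)$, of dimension $g$; hence $\mu_{0,A}$ is not injective and, as recalled in the introduction, $E$ is non-simple. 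I would then pass to the coherent system $(E,H^0(S,E))$, which inherits non-stability with respect to every $\alpha$, fix $\alpha=p_S$, take the maximal destabilizing subsystem $(E_1,V_1)$, and obtain an exact sequence $0\to E_1\to E\to E_2\to 0$ in which $E_1$ and $E_2$ are generalized Lazarsfeld--Mukai bundles.

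Next I would extract a line bundle from $E_1$. For $r=1$ the bundle $E$ has rank $2$ and the destabilization is the Donagi--Morrison sequence $0\to M\to E\to N\otimes\mathcal{I}_Z\to 0$ with $M,N\in\Pic(S)$ and $Z$ a finite subscheme; here $M:=\det E_1$ is already the desired line bundle. For $r=2$ the destabilizing sub has rank $1$ or $2$: in the rank-$1$ case one proceeds as above with $M:=\det E_1$, while in the rank-$2$ case I would apply the construction once more to the rank-$2$ generalized bundle $E_1$ (or dually to $E_2$) to peel off a line bundle, reducing to the line-bundle situation. I would then verify that $M$ is adapted to $\vert L\vert$: the inequality $h^0(S,M)\geq 2$ holds because the destabilizing subsystem carries at least two sections, and $h^0(S,L\otimes M^\vee)\geq 2$ because the complementary sections survive in the quotient; condition (2) in the definition of adaptedness holds because, $S$ having no $(-2)$-curve and $L$ being ample, the relevant line bundles are big and nef and so have vanishing $h^1$.

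It then remains to establish (i) and (ii). For (i), dualizing the inclusion $E_1\hookrightarrow E$ and invoking the defining sequence of $E$ converts the presence of $V_1$ into the inequality $h^0(C,A^\vee\otimes M\otimes\oo_C)>0$, which is exactly the statement that $\vert A\vert$ is contained in the restriction of $\vert M\vert$ to $C$. For (ii), comparing the reduced Hilbert polynomials (equivalently the pairs $(c_1,c_2)$) across the destabilizing sequence, together with the identity $\Cliff(A)=d-2r$, should yield $\Cliff(M\otimes\oo_C)=M\cdot C-2h^0(M\otimes\oo_C)+2\leq\Cliff(A)$, where the hypothesis $d\leq g-1$ keeps the estimate on the correct side.

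The hard part is step (ii) together with the control of the quotient $E_2$ (its torsion, and the finite scheme $Z$). Proving the Clifford inequality forces one to exclude the degenerate sub-configurations singled out in condition (*) of Theorem \ref{mannaggia}, namely the irreducible elliptic curve with $\Sigma\cdot C=4$ and the genus-$2$ curve with $B\cdot C=6$, and this is exactly where the hypotheses on $(C,A)$ and on its deformations must enter; the counterexamples in the Appendix show they cannot be dropped. Moreover, while for $r=1,2$ the destabilizing sub can always be reduced to a single line bundle with the right numerics, for $r\geq 3$ a rank-$2$ generalized Lazarsfeld--Mukai quotient need not reduce cleanly, and the inductive control of the Clifford index breaks down; for this reason I expect the argument to establish the conjecture only for $r=1$ and $r=2$.
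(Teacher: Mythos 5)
The statement you set out to prove is not a theorem of the paper: it is the unconditional Conjecture \ref{dm}, which the paper in fact \emph{disproves}. The Appendix constructs a counterexample: on a $K3$ surface with $\Pic(S)=\mathbb{Z}[H]$, $H^2=2p-2$, $p\geq 4$ even, the bundle $E\oplus E$ (with $E$ the stable rank-$2$ LM bundle of a gonality pencil on a curve in $\vert H\vert$) is the LM bundle of a complete, base point free $g^3_{3p}$ on a smooth $C\in\vert 2H\vert$ with $3p\leq g-1$ and $\rho(g,3,3p)=-3$; the only line bundle on $S$ that could serve as $M$ is $\oo_S(H)$, and $h^0(A^\vee\otimes\oo_C(H))=2h^0(E(-H))=0$, so part (i) fails. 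The paper's positive result is Theorem \ref{chiana}, which proves the conjecture only under the extra hypothesis that $(C,A)$ has no unexpected secant varieties up to deformation (and that $S$ has no $(-2)$-curves). Consequently no blind proof of the statement as given can close, and yours breaks exactly where you suspected: when the maximal destabilizing subsystem $(E_1,H^0(E_1))$ has rank $\geq 2$. Your plan to ``apply the construction once more to peel off a line bundle'' from a rank-$2$ sub is impossible in general; in the Appendix example the destabilizing sub is the stable bundle $E$ itself, which admits no nonzero map from any effective line bundle. The paper's handling of $r_1\geq 2$ in Theorem \ref{chiana} is entirely different: it does not reduce the rank, but shows that a stable destabilizing sub of rank $\geq 2$ forces a deformation $(X,B)$ of $(C,A)$ carrying a secant variety $V^{e-f}_e(B)$ of negative expected dimension, which is a contradiction only against the added secant-variety hypothesis. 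Likewise, the unconditional $r=2$ case is not obtained by peeling: in \cite{ciotola} one shows via $\mu_L$-stability that $\rho(g,2,d)<0$ \emph{forces} the destabilizing subsheaf to have rank $1$.

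Two further concrete errors would remain even in the rank-$1$ case. First, your dictionary is reversed: if $N$ is a line bundle with $N\hookrightarrow E_{C,A}$ and $h^0(N)\geq 2$, then what follows (Remark \ref{ice}) is $h^0(A^\vee\otimes(L\otimes N^\vee)\otimes\oo_C)>0$, i.e.\ $\vert A\vert$ is contained in the restriction of $\vert L\otimes N^\vee\vert$ to $C$; so the conjectural $M$ is $L\otimes(\det E_1)^\vee$ (the quotient side, essentially $\det E_2$), not $\det E_1$ as you wrote. Accordingly, inequality (ii) is proved in Proposition \ref{carino} from the identity $\Cliff(A)=c_1(N)\cdot c_1(E/N)+\Cliff(E/N)-2$ together with $\Cliff(E/N)\geq 0$ (Corollary \ref{index}), with separate arguments in the elliptic-pencil cases $h^1(\det E/N)\neq 0$ or $h^1(N)>0$ --- not by the Hilbert-polynomial comparison you sketch. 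Second, your verification that $M$ is adapted to $\vert L\vert$ invokes ampleness of $L$ and the absence of $(-2)$-curves, neither of which is a hypothesis of the conjecture; the no-$(-2)$-curve assumption is an extra hypothesis of Theorem \ref{chiana}, needed precisely so that Proposition \ref{generated} produces a nef line bundle inside $E_1$ when $E_1$ is only an elementary modification of a g.LM bundle of type (I).
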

The condition $c_1(M)\cdot C\leq g-1$ is here replaced with the requirement that $M$ is adapted to $\vert L\vert$. Theorem (5.1') in \cite{donagi} proves both Conjecture \ref{falsa} and Conjecture \ref{dm} for $r=1$. The refined Conjecture \ref{dm} for $r=2$ was obtained in \cite[Theorem 1.1]{ciotola} under some mild hypotheses on the line bundle $L$; note that, for $r=2$, the inequality $c_1(M)\cdot C\leq (4g-4)/3$ was also obtained and Counterexample \ref{fortuna} shows that this bound is optimal.

By using coherent systems and a generalization of the notion of LM bundles, we reduce Conjecture \ref{dm} to a question of secant varieties. Our main result is the following:
\begin{thm}\label{chiana}
Let $S$ be a smooth projective $K3$ surface containing no ($-2$)-curves. Let $A$ be a complete, base point free $g^r_d$ on a smooth genus $g$ curve $C\subset S$ such that $d\leq g-1$ and $\rho(g,r,d)<0$, and assume that the pair $(C,A)$ has no unexpected secant varieties up to deformation. Then, Conjecture \ref{dm} holds for $A$.
\end{thm}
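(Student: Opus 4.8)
The plan is to encode the Brill--Noether data of $(C,A)$ on the surface via the rank $r+1$ Lazarsfeld--Mukai bundle $E:=E_{C,A}$, and then to read off the line bundle $M$ demanded by Conjecture \ref{dm} from a destabilizing subobject of the associated coherent system. First I would recall the defining sequence of $E$: its dual is the kernel of the evaluation $H^0(C,A)\otimes\oo_S\to A$, so that $\rk E=r+1$, $\det E\simeq L$, $c_2(E)=d$, and $h^0(S,E)=h^0(C,A)+h^1(C,A)$. A Mukai-vector computation gives $\chi(E,E)=2-2\rho(g,r,d)$, whence the hypothesis $\rho(g,r,d)<0$ together with Serre duality on the $K3$ forces $\dim\Hom(E,E)\geq 1-\rho(g,r,d)>1$; thus $E$ is non-simple, and this conclusion requires no genericity of $C$ precisely because $\rho<0$. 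Consequently the coherent system $(E,H^0(S,E))$ is non-stable for the choice $\alpha=p_S$, and I would pass to its maximal destabilizing subsystem, producing a short exact sequence of coherent systems whose underlying sheaves fit into
\[
0\to\F\to E\to\G\to 0,
\]
with $\F$ and $\G$ generalized Lazarsfeld--Mukai bundles in the sense classified earlier in the paper.

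From this filtration I would extract the candidate $M$ as the determinant of the saturation of the destabilizing piece $\F$, so that $c_1(\F)+c_1(\G)=L$. The absence of $(-2)$-curves on $S$ is exactly what keeps the intersection numbers under control: it guarantees that the two Chern classes produced are represented by effective nef divisors, hence that $h^0(S,M)\geq 2$ and $h^0(S,L\otimes M^\vee)\geq 2$, which is condition (1) in the definition of an adapted line bundle. Condition (2), the constancy of $\Cliff(M\otimes\oo_C)$, then follows because the destabilizing sequence forces either $h^1(S,M)=0$ or $h^1(S,L\otimes M^\vee)=0$. The Clifford inequality (ii) of Conjecture \ref{dm} should drop out of the slope comparison intrinsic to $\alpha$-stability: the inequality between the reduced Hilbert polynomial of the destabilizing subsystem and that of $(E,H^0(S,E))$ translates, upon restriction to $C$, into precisely $\Cliff(M\otimes\oo_C)\leq\Cliff(A)$.

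The heart of the matter, and the step I expect to be the main obstacle, is containment (i): that $\vert A\vert$ lies in the restriction of $\vert M\vert$ to $C$, equivalently (when $H^0(S,M)\simeq H^0(C,M\otimes\oo_C)$) that $h^0(C,A^\vee\otimes M\otimes\oo_C)>0$. Restricting the sequence above to $C$ exhibits $A$ as a subsheaf of $M\otimes\oo_C$ only up to a correction supported on a zero-dimensional subscheme $Z\subset C$ coming from the torsion of $\G\otimes\oo_C$; the position of $Z$ relative to $A$ is a secant condition, and whether it forces the required nonvanishing $h^0(C,A^\vee\otimes M\otimes\oo_C)>0$ is governed by the dimensions of the secant varieties of $(C,A)$. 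This is where the hypothesis enters: assuming $(C,A)$ has no unexpected secant varieties up to deformation, I would deform $C$ in $\vert L\vert$ together with $A$ in the associated family of $g^r_d$'s and invoke semicontinuity to reduce the containment to the non-emptiness of a secant locus of the expected dimension. The no-$(-2)$-curve hypothesis is used a second time here, to ensure that the family of Lazarsfeld--Mukai bundles behaves uniformly and that no rational curve of negative self-intersection distorts the restricted linear system, so that the secant divisor $Z$ does not obstruct the inclusion and $A$ genuinely embeds as a subseries of $M\otimes\oo_C$.
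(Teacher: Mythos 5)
Your opening moves (non-simplicity of $E_{C,A}$ from $\rho<0$, passage to the coherent system $(E,H^0(E))$ with $\alpha=p_S$, and its maximal destabilizing sequence $0\to\F\to E\to\G\to 0$) coincide with the paper's, but from that point the proposal has a genuine gap, in fact two. First, your candidate $M$ points the wrong way, and with it the only available mechanism for containment (i) disappears. The containment of $\vert A\vert$ in a restricted linear system comes from Remark \ref{ice}: an injection $N\hookrightarrow E_{C,A}$ of a line bundle with $h^0(N)\geq 2$, composed with $E\to\omega_C\otimes A^\vee$, gives by adjunction a nonzero section of $A^\vee\otimes(L\otimes N^\vee)\otimes\oo_C$. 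Hence the adapted line bundle must be $M=L\otimes N^\vee$, i.e.\ the determinant of the \emph{quotient}, not the determinant of (the saturation of) the destabilizing subsheaf $\F$ as you propose. More importantly, this mechanism requires a \emph{line bundle} inside $E$, i.e.\ it requires the destabilizing piece to have rank $1$, and nothing in your argument addresses $\rk\F$. The case $\rk\F\geq 2$ is the whole difficulty of the theorem: when $\rk\F=1$ (or when Proposition \ref{generated} — this is where the no-($-2$)-curves hypothesis actually enters — or Proposition \ref{elliptic} produces a nef line bundle inside $\F$), the paper concludes at once from Proposition \ref{carino}, whose proof also yields the Clifford inequality (ii) via $\Cliff(A)=c_1(N)\cdot c_1(E/N)+\Cliff(E/N)-2$ and $\Cliff(E/N)\geq0$ (Corollary \ref{index}); the coherent-system slope inequality alone does not "translate into" (ii) as you assert.

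Second, your use of the hypothesis on secant varieties is not a valid argument. "No unexpected secant varieties up to deformation" is a non-existence statement; it cannot be invoked through semicontinuity to force the nonvanishing $h^0(C,A^\vee\otimes M\otimes\oo_C)>0$, and your zero-dimensional "correction" $Z$ is never tied to it by any actual estimate. The paper uses the hypothesis by contradiction, precisely to kill the case $r_1:=\rk\F\geq 2$ with $c_1(\F)^2>0$: for a general $V_1\in G(r_1-1,H^0(\F))$, Proposition \ref{general} gives a reduced $0$-dimensional degeneracy locus $\xi$, and an elementary-modification diagram produces an integral (possibly singular) curve $X\in\vert L\vert$ and a torsion-free rank-one sheaf $B$ on $X$ with $E_{X,B}\simeq E$ and $\xi\in V^{e-f}_e(B)$, where $e=c_2(\F)$ and $f=c_2(\F)-r_1+1$. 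The computation $\mathrm{expdim}\,V^{e-f}_e(B)\leq-(r_1-1)(r_2-1)$, which rests on $\Cliff(\F)\geq0$ (with the boundary case excluded again by Corollary \ref{index}), shows this secant variety is unexpected, contradicting the hypothesis; so $r_1=1$ and Proposition \ref{carino} finishes the proof. Without this step your proposal never rules out a higher-rank destabilizing subsheaf, and for such a subsheaf the conclusion of the theorem simply does not follow from the sequence you wrote down.
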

We explain the statement of the theorem by spending a few words on the new concept of {\em having no unexpected secant varieties up to deformation}.

Given $A\in W^r_d(C)$ and having fixed integers $0\leq f<e$, the {\em variety of secant divisors $V^{e-f}_e(A)$} is the determinantal variety:
$$V^{e-f}_e(A):=\{D\in C_e\,\vert\, h^0(C,A(-D))\geq r+1-e+f\}.$$
By definition, the variety $V^{e-f}_e(A)$ parametrizes effective divisors of degree $e$ on $C$ which impose at most $e-f$ conditions on the linear system $\vert A\vert$. If $A$ is very ample, these are the ($e-f-1$)-planes which are $e$-secant to the embedded curve $C\stackrel{\vert A\vert}{\hookrightarrow} \mathbb{P}^r$. Also note that, when $A=\omega_C$, the variety $V^{e-f}_e(A)$ is the inverse image of the Brill-Noether variety $W^f_e(C)$ under the Abel-Jacobi map $C_e\to \Pic(C)$. It is classically known that:
 $$\mathrm{expdim}\,V^{e-f}_e(A)=e-f(r+1-e+f);$$
 we refer to \cite{gabi} for results concerning the existence (resp. non-existence) of linear series with special secancy conditions on an arbitrary (resp. general) genus $g$ curve.

 Replace now $C$ with an integral (and possibly singular) curve $X$, and $A$ with a torsion free sheaf $B\in \overline{W}^r_d(X)$, i.e., lying in the compactified Jacobian $\overline{J}^d(X)$ and satisfying $h^0(X,B)\geq r+1$. Then, the definition of the secant variety $V^{e-f}_e(B)$ still makes sense with some slight modifications:
 $$
 V^{e-f}_e(B):=\{q\in \mathrm{Quot}_{B}^e\,\vert\, h^0(X,\ker q)\geq r+1-e+f\},
 $$
 where the Quot scheme $\mathrm{Quot}_{B}^e$ parametrizes quotients $q:B\to Q$ of degree $e$.

Given $C$ and $A$ as in Conjecture \ref{dm}, we say that the pair $(C,A)$ {\em has some unexpected secant varieties up to deformation} if it can be deformed to a pair $(X,B)$ such that the following hold:
 \begin{itemize}
 \item The curve $X\in\vert L\vert$ is integral, the sheaf $B\in\overline{J}^d(X)$ is globally generated and $E_{X,B}\simeq E_{C,A}$; in particular, $h^0(X,B)=r+1$.
 \item For some integers $0\leq f<e$, one has $V^{e-f}_e(B)\neq \emptyset$ and $\mathrm{expdim}\,V^{e-f}_e(B)<0$.
 \end{itemize}
 
 The bundle $E_{X,B}$ above is defined in the same way as LM bundles for line bundles on smooth curves, that is, $E_{X,B}$ is the dual of the kernel of the evaluation map $ H^0(X,B)\otimes \oo_S\twoheadrightarrow B$.

In the joint Appendix with Andreas Leopold Knutsen, we show that the hypothesis in Theorem \ref{chiana} concerning secant varieties cannot be avoided by exhibiting a counterexample to Conjecture \ref{dm}. The failure of the conjecture is obtained along with the existence of some unexpected secant varieties on a deformation of the pair $(C,A)$.

The organization of the paper is as follows. In Section 2, we introduce generalized Lazarsfeld-Mukai bundles (g.LM bundles in the sequel, cf. Definition \ref{inizio}), which share some common properties with LM bundles. In particular, the definition of the Clifford index of a g.LM bundle $E$ will play a central role in the proof of Theorems \ref{mannaggia} and \ref{chiana}; by Corollary \ref{index}, $\Cliff(E)$ is nonnegative as soon as $c_1(E)^2>0$. The case $c_1(E)^2=0$ is covered by Proposition \ref{elliptic}.

Section 3 contains some preliminaries on coherent systems. Theorem \ref{riso} expresses any non-simple and globally generated LM bundle as an extension of a g.LM bundle of type (II) by the elementary modification of a g.LM bundle of type (I). 
 
In Section 4, Theorem \ref{mannaggia} is proved. The proof is made technically difficult by the possible presence of smooth curves on $S$ which are rational or elliptic or hyperelliptic.

Finally, in Section 5 we focus on Conjectures \ref{falsa} and \ref{dm} and prove Theorem \ref{chiana}. The strategy consists in considering the maximal destabilizing pair $(E_1,H^0(E_1))$ of the coherent system $(E_{C,A},H^0(E_{C,A}))$ and in showing that, as soon as the vector bundle $E_1$ has rank $\geq 2$, some deformation of the pair $(C,A)$ has an unexpected secant variety.\\\vspace{0.2cm}

\textbf{Acknowledgements:} I have benefited from interesting conversations and correspondence with Ciro Ciliberto, Gavril Farkas, Daniel Huybrechts, Andreas Leopold Knutsen and Peter Newstead, and I would like to thank all of them. I especially thank Alessandro D'An\-drea, Gavril Farkas and Andreas Leopold Knutsen for useful comments concerning a preliminary version of the paper.  This work was done during my stay at the Max Planck Institute for Mathematics in Bonn and I am grateful to this institution for the warm hospitality!
 
\subsection{Notation and preliminaries}
For us, $S$ will always be a smooth projective $K3$ surface and $C$ a smooth irreducible curve on it, whose genus is denoted by $g$.  A linear series $(A,V)$ of type $g^r_d$ on $C$ is called {\em primitive} if both $(A,V)$ and $\omega_C\otimes A^\vee$ are base point free. We set $L:=\oo_S(C)$, and we denote by $\W^r_d(\vert L\vert)$ the variety parametrizing pairs $(C',A')$ with $C'$ a smooth irreducible curve in the linear system $\vert L\vert$ and $A'\in W^r_d(C')$.

In all cases where no confusion arises, given a sheaf $\mathcal{F}$ on a scheme $Y$, we will simplify notation and write $H^i(\mathcal{F})$ (or $h^i(\mathcal{F})$ for the corresponding dimension), dropping any reference to $Y$.

Given a $0$-dimensional sheaf $\tau$ on $S$, we denote by $l(\tau)$ its length, which coincides by definition with $h^0(\tau)$.

All diagrams appearing in the paper are commutative and all of their columns and rows are exact.

Throughout the paper, we will make frequent use of the following strong version of Bertini's Theorem, due to Saint-Donat.
\begin{thm}[\cite{donat}]\label{bertini}
Let $L$ be a line bundle on a $K3$ surface $S$ such that $h^0(S,L)>0$. Then, $\vert L\vert$ has no base points outside its fixed components. Furthermore, if $\vert L\vert$ has no base components, then either of the following holds:
\begin{enumerate}
\item[\em (i)] If $c_1(L)^2>0$, then $h^1(S,L)=0$ and a general element in $\vert L\vert$ is a smooth, irreducible curve of genus $g=1+c_1(L)^2/2$.
\item[\em (ii)] If $c_1(L)^2=0$, then there exist a number $k\in\mathbb{Z}^{>0}$ and an irreducible curve $\Sigma\subset S$ with $p_a(\Sigma)=1$ such that $L=\oo_S(k\Sigma)$. In this case, one has $h^0(S,L)=k+1$, $h^1(S,L)=k-1$ and every  element in $\vert L\vert$ can be written as a sum $\Sigma_1+\Sigma_2+\cdots+\Sigma_k$ with $\Sigma_i\in\vert \Sigma\vert$ for $1\leq i\leq k$.
\end{enumerate}
\end{thm}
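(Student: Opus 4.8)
The backbone of any proof will be Riemann--Roch together with Serre duality on the $K3$ surface: since $\omega_S\cong\oo_S$ and $\chi(\oo_S)=2$, one has
\[
\chi(L)=2+\tfrac12\,c_1(L)^2,\qquad h^2(S,L)=h^0(S,L^\vee),\qquad h^1(S,\oo_S)=0.
\]
The plan is to peel off the fixed part and then analyse the movable system according to the sign of its self-intersection. Concretely, I would write $\vert L\vert=F+\vert M\vert$, where $F$ is the (effective) fixed divisor and $\vert M\vert$ is the movable system, which by construction has no fixed component. Multiplication by the fixed section identifies $H^0(S,L)$ with $H^0(S,M)$, so a point outside $\supp F$ is a base point of $\vert L\vert$ exactly when it is a base point of $\vert M\vert$; thus statement (1) is equivalent to the base-point-freeness of $\vert M\vert$. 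Since two general members of $\vert M\vert$ share no common component, $c_1(M)^2\geq0$, and I will treat the cases $c_1(M)^2>0$ and $c_1(M)^2=0$ separately, proving base-point-freeness in each. This simultaneously yields (2), whose hypothesis is exactly $F=0$, i.e. $M=L$.

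\textbf{The case $c_1(M)^2>0$.} First I would show that a general member $C\in\vert M\vert$ is integral. By the classical Bertini irreducibility theorem the only alternatives are that $\vert M\vert$ has a fixed component (excluded) or that $\vert M\vert$ is composed with a pencil; but in the latter case $c_1(M)$ is numerically proportional to a fiber class $P$ with $P^2=0$, forcing $c_1(M)^2=0$, a contradiction. Hence $C$ is integral and Gorenstein, and adjunction on the $K3$ surface gives $M\vert_C\cong\omega_C$ and $p_a(C)=1+\tfrac12 c_1(M)^2$. Restricting along $C$ produces
\[
0\longrightarrow\oo_S\longrightarrow M\longrightarrow\omega_C\longrightarrow0.
\]
Taking cohomology and using $h^1(S,\oo_S)=0$, $h^2(S,M)=h^0(S,M^\vee)=0$ (as $M$ is effective and big, $M^\vee$ has no sections), $h^1(C,\omega_C)=h^0(C,\oo_C)=1$ and $h^2(S,\oo_S)=1$, the connecting map $H^1(\omega_C)\to H^2(\oo_S)$ is a surjection of one-dimensional spaces, hence an isomorphism. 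This forces $h^1(S,M)=0$ and the surjectivity of $H^0(S,M)\to H^0(C,\omega_C)$.

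\textbf{Base-point-freeness: the main obstacle.} The surjection above identifies the system cut out on $C$ by $\vert M\vert$ with the full canonical system $\vert\omega_C\vert$, so a point of $C$ is a base point of $\vert M\vert$ precisely where $\omega_C$ fails to be globally generated. I expect this to be the hard step: one must exclude base points at the (a priori possibly singular) points of the integral Gorenstein curve $C$, i.e. prove global generation of $\omega_C$. I would argue by contradiction, assuming a base point $x$; examining the subsystem of divisors through $x$, projecting, or running Saint-Donat's numerical analysis, one is driven to produce either a decomposition exhibiting a fixed component or a class contradicting $c_1(M)^2>0$. Once base-point-freeness is established, $C$ moves in a base-point-free system, so by Bertini's smoothness theorem a general member is smooth; being also integral it is a smooth irreducible curve of genus $1+\tfrac12 c_1(L)^2$, and the cohomology computation gives $h^1(S,L)=0$. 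This is exactly conclusion (i).

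\textbf{The case $c_1(M)^2=0$.} Here Riemann--Roch yields $h^0(S,M)\geq2$, so $\vert M\vert$ is at least a pencil, while $c_1(M)^2=0$ forces the associated map to contract $S$ onto a curve; hence $\vert M\vert$ is composed with a pencil $\vert\Sigma\vert$. I would show $\Sigma$ may be taken irreducible with $\Sigma^2=0$, whence $p_a(\Sigma)=1$ by adjunction, and that $M=\oo_S(k\Sigma)$ for some $k\geq1$ with $\vert\Sigma\vert$ a base-point-free elliptic pencil; its base-point-freeness again follows from the restriction sequence, now with $\omega_\Sigma\cong\oo_\Sigma$ trivially globally generated. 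Then every member of $\vert k\Sigma\vert$ is a sum $\Sigma_1+\cdots+\Sigma_k$ with $\Sigma_i\in\vert\Sigma\vert$, the system $\vert k\Sigma\vert$ is base-point-free, and Riemann--Roch together with $h^2(S,M)=h^0(S,M^\vee)=0$ gives $h^0=k+1$, $h^1=k-1$. This is conclusion (ii), and together with the previous case it also completes the proof of (1).
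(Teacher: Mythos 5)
The paper itself offers no proof to compare against: Theorem \ref{bertini} is quoted from Saint-Donat \cite{donat} and used as a black box, so your attempt can only be measured against the known proofs of that result. Measured that way, your reductions are sound and follow the classical route: the splitting $\vert L\vert=F+\vert M\vert$ with $\vert M\vert=\vert L-F\vert$ complete and fixed-component free; the exclusion of pencil-composed systems when $c_1(M)^2>0$ (though even here your parenthetical claim that the pencil class $P$ automatically has $P^2=0$ needs an argument: irrational pencils are ruled out by $h^1(S,\oo_S)=0$, and for a linear pencil one must note that $\dim\vert kP\vert\geq 1+k^2P^2/2>k$ when $P^2\geq 2$, contradicting compositeness of the \emph{complete} system $\vert M\vert$); the restriction sequence $0\to\oo_S\to M\to\omega_C\to 0$; and the deductions $h^1(S,M)=0$ and that $\vert M\vert$ cuts out the complete system $\vert\omega_C\vert$ on a general integral member $C$.

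The genuine gap is exactly the step you label ``the main obstacle'': proving that $\vert M\vert$ is base point free when $c_1(M)^2>0$, equivalently that $\omega_C$ is globally generated on the integral but possibly \emph{singular} Gorenstein curve $C$. Your proposal there (``one is driven to produce either a decomposition exhibiting a fixed component or a class contradicting $c_1(M)^2>0$'') is a statement of intent, not an argument, and it is the entire content of the theorem; everything surrounding it is bookkeeping. Note also that you cannot sidestep the singularities by genericity: a base point $x$ lies on \emph{every} member of $\vert M\vert$, so it may a priori be a singular point of all of them, which is precisely why smooth-curve Riemann--Roch does not suffice. The standard ways to close the gap are Saint-Donat's original numerical analysis, or the Rosenlicht--Catanese theorem that the dualizing sheaf of an integral Gorenstein curve of arithmetic genus $\geq 1$ is globally generated. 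The latter runs as follows: global generation of $\omega_C$ at $x$ fails iff $h^1(C,I_x\omega_C)\geq 2$, and Serre duality gives $h^1(C,I_x\omega_C)=\dim\Hom(I_x,\oo_C)$; this dimension equals $1$ because at a smooth point $\Hom(I_x,\oo_C)=H^0(\oo_C(x))$, and $h^0(\oo_C(x))\geq 2$ would force $C\simeq\mathbb{P}^1$, while at a singular point every $\phi:I_x\to\oo_C$ maps $I_x$ into $I_x$ (otherwise $I_x$ would be principal, hence $x$ smooth) and $H^0(\mathcal{H}om(I_x,I_x))$ consists of constants. Without this ingredient (or an equivalent), your proof does not go through; by contrast, your case $c_1(M)^2=0$, while also only sketched, consists of routine verifications.
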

We recall that an effective divisor $D\subset S$ is called {\em numerically $m$-connected} if, whenever $D= D_1+D_2$ with $D_1$ and $D_2$ effective and nonzero, one has $D_1\cdot D_2\geq m$. The following result is used in Sections 4 and 5.
\begin{thm}[\cite{donat, reid}]
Let $S$ and $L$ be as in Theorem \ref{bertini} and assume $c_1(L)^2>0$. Then, $\vert L\vert$ has no fixed components if and only if every divisor in $\vert L\vert$ is numerically $2$-connected.
\end{thm}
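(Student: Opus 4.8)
The plan is to prove the two implications separately. Throughout I use the Riemann--Roch formula $\chi(\oo_S(E))=2+E^2/2$ and Serre duality $h^2(\oo_S(E))=h^0(\oo_S(-E))$ on the $K3$ surface, so that $h^0(\oo_S(E))=2+E^2/2+h^1(\oo_S(E))$ for every effective $E>0$. I also record the identity $h^0(\oo_D)=1+h^1(L)$ for $D\in|L|$, obtained from $0\to\oo_S(-D)\to\oo_S\to\oo_D\to0$ together with $h^1(\oo_S)=0$ and $h^1(\oo_S(-D))=h^1(L)$; this is what converts numerical $1$-connectedness of members into the vanishing $h^1(L)=0$.

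For the implication $(\Rightarrow)$, assume $|L|$ has no fixed component. By Theorem \ref{bertini} (its opening assertion together with part (i)) the system $|L|$ is then base-point free and $h^1(L)=0$, so $h^0(L)=2+L^2/2$. Let $D\in|L|$ and write $D=A+B$ with $A,B>0$; I must show $A\cdot B\geq2$. First, $h^1(L)=0$ gives $h^0(\oo_D)=1$, already forcing $A\cdot B\geq1$. Assume for the moment that $A,B$ share no common component, so that $A\cap B$ is zero-dimensional of length $A\cdot B$. Twisting the Koszul resolution of $\oo_{A\cap B}$ by $L$ yields $0\to\oo_S\to\oo_S(A)\oplus\oo_S(B)\to L\to\oo_{A\cap B}\to0$, and since $h^1(\oo_S)=0$ a diagram chase gives $h^0(L)=h^0(\oo_S(A))+h^0(\oo_S(B))-1+\dim\,\mathrm{im}\big(H^0(L)\to H^0(\oo_{A\cap B})\big)$. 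If $A\cdot B\leq1$ then $A\cap B$ has length $\leq1$, and base-point freeness makes this last map surjective; substituting $h^0(\oo_S(E))=2+E^2/2+h^1(\oo_S(E))$ for $E=A,B$, the value $h^0(L)=2+L^2/2$, and $L^2=A^2+2A\cdot B+B^2$, one finds $h^1(\oo_S(A))+h^1(\oo_S(B))=-1$, which is absurd. Hence $A\cdot B\geq2$.

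For $(\Leftarrow)$ I argue by contraposition: suppose $|L|$ has a fixed part $\Phi>0$ and, toward a contradiction, that every member is $2$-connected. Write $L=\Phi+M$ with $|M|$ the moving part, so $h^0(M)=h^0(L)$. Every irreducible component $\Gamma$ of $\Phi$ is rigid, hence a smooth rational curve with $\Gamma^2=-2$ and $p_a(\Gamma)=0$: if $\Gamma^2\geq0$ then $|\Gamma|$ is base-point free and its members sweep out $S$, which would force every divisor of $|L|$ to contain all of them. Since every member is $2$-connected, in particular $1$-connected, the identity above gives $h^1(L)=0$ and $h^0(L)=2+L^2/2$. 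Applying Theorem \ref{bertini} to $M$: if $M^2>0$ then $h^0(M)=2+M^2/2$, whence $L^2=M^2$; if $M^2=0$ then $M=k\Sigma$ with $p_a(\Sigma)=1$ and $h^0(M)=k+1$, whence $L^2=2k-2$. Now choose a member with general moving part, $D=\Phi+M_0$ with $M_0$ smooth irreducible when $M^2>0$, and $D=\Phi+\Sigma_1+\cdots+\Sigma_k$ with distinct $\Sigma_j\in|\Sigma|$ when $M^2=0$. Writing $D=\sum_c m_c\,c$ over its irreducible components, the positive-genus components are only $M_0$ (resp. the $\Sigma_j$), so $\sum_c m_c\,p_a(c)$ equals $1+M^2/2$ (resp. $k$); in either case the values of $L^2$ above give $\sum_c m_c\,p_a(c)>L^2/2=p_a(D)-1$. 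Using $c^2=2p_a(c)-2$ and $\sum_c m_c\,c=D$, this is exactly $\sum_c m_c\,[\,c\cdot(D-c)\,]=L^2-\sum_c m_c\,c^2<2\sum_c m_c$, so the weighted average of the integers $c\cdot(D-c)$ is strictly below $2$. Hence some component $c_0$ satisfies $c_0\cdot(D-c_0)\leq1$, and the splitting $D=c_0+(D-c_0)$ violates $2$-connectedness, the desired contradiction.

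The main obstacle is the reverse implication, and concretely the strict inequality $\sum_c m_c\,p_a(c)>L^2/2$ that drives the averaging step: it is here that the full force of Saint-Donat's structure theorem enters, through the dichotomy $M^2>0$ versus $M^2=0$ (which pins down $L^2$ exactly) and through the fact that the fixed components, being $(-2)$-curves, contribute nothing to $\sum_c m_c\,p_a(c)$. A secondary technical point, in the forward direction, is the case of a splitting $D=A+B$ with a common component (a non-reduced member): then $A\cap B$ is no longer zero-dimensional, and one must first factor out the common divisor and apply the base-point-free separation argument to the coprime parts, checking that the estimate persists.
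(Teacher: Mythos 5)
The paper does not prove this statement at all: it is quoted from Saint-Donat and Reid, so your attempt can only be measured against the classical arguments. Your reverse implication (every member $2$-connected $\Rightarrow$ no fixed components, by contradiction) is essentially sound: fixed components are rigid, hence $(-2)$-curves of arithmetic genus $0$; Saint-Donat applied to the moving part $M$ pins down $h^0(M)=h^0(L)$ in both cases $M^2>0$ and $M^2=0$; and averaging $c\cdot(D-c)$ over the components of a well-chosen member produces a component violating $2$-connectedness. Two blemishes there: the rigidity of fixed components should be justified by the dimension count $\dim\vert L\vert\geq\dim\vert\Gamma\vert+\dim\vert M\vert$ rather than the sweeping-out remark, and the fact that a $1$-connected divisor has $h^0(\oo_D)=1$ is a standard lemma you should cite --- although your own inequalities let you bypass it entirely, since $h^0(L)\geq 2+L^2/2$ already yields $M^2\geq L^2$ (resp. $k\geq 1+L^2/2$), which is all the averaging step needs.

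The genuine gap is in the forward implication, exactly at the point you defer: decompositions $D=A+B$ in which $A$ and $B$ share a component. Your Koszul argument requires $A\cap B$ to be zero-dimensional of length $A\cdot B$, and the proposed repair --- ``factor out $M=\gcd(A,B)$ and apply the separation argument to the coprime parts'' --- does not go through: $A'+B'=D-2M$ is a member of $\vert L\otimes\oo_S(-2M)\vert$, not of $\vert L\vert$, and none of the inputs of your argument (base-point freeness, $h^1=0$, the value of $h^0$) are available for that system; moreover $A'$ or $B'$ may vanish (e.g.\ $D=2M$), and comparing $A\cdot B$ with $A'\cdot B'$ by correction terms fails because $M$ may consist of $(-2)$-curves of negative self-intersection. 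The standard treatment of the general case is numerical and needs no coprimality: if $A\cdot B\leq 1$, nefness of $L$ and evenness of the K3 lattice give $A^2\geq 0$ and $B^2\geq 0$; the light-cone lemma excludes $A\cdot B\leq 0$ (it would force $L^2=0$), and the Hodge index theorem then gives $A\cdot B=1$ with, say, $A^2=0$, hence $L\cdot A=1$. Since $h^0(\oo_S(A))\geq\chi(\oo_S(A))=2$, the moving part of $\vert A\vert$ is base point free with square zero, so it equals $k\Sigma$ for an elliptic pencil $\Sigma$, and $L\cdot A=1$ forces $k=1$ and $L\cdot\Sigma=1$; but then $L\otimes\oo_\Sigma$ is a globally generated line bundle of degree $1$ on an integral curve of arithmetic genus $1$, which is impossible (it would have $h^0\geq 2$ and embed $\Sigma$ as a line). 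Finally, your side remark that $h^0(\oo_D)=1$ ``already forces $A\cdot B\geq 1$'' reverses the true implication ($1$-connectedness implies $h^0(\oo_D)=1$, not conversely); it is unused in your argument, but it should be deleted.
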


\section{generalized Lazarsfeld-Mukai bundles}
We start by recalling the definition of LM bundles. We refer to \cite{aprodu} for an exhaustive survey about the topic. If $(A,V)$ is a base point free $g^r_d$ on a smooth irreducible curve $C\subset S$, the LM bundle $E_{C,(A,V)}$ is by definition the dual of the kernel of the evaluation map $V\otimes \oo_S\twoheadrightarrow A$. In particular, $E_{C,(A,V)}$ sits in the short exact sequence
$$
0\to V^\vee\otimes \oo_S\to E_{C,(A,V)}\to \omega_C\otimes A^\vee\to 0,
$$
and $V^\vee$ defines an ($r+1$)-dimensional subspace of the global sections of $E_{C,(A,V)}$. When the $g^r_d$ on $C$ is complete,  then the LM bundle is simply denoted by $E_{C,A}$. Here are some basic facts concerning LM bundles:
\begin{prop}
The LM bundles $E_{C,(A,V)}$ satisfies the following properties:
\begin{itemize}
\item $\rk\,E_{C,(A,V)}=r+1$, $c_1(E_{C,(A,V)})=c_1(L)$, $c_2(E_{C,(A,V)})=d$;
\item $\chi(E_{C,(A,V)})=g-d+2r+1$, $h^1(E_{C,(A,V)})=h^0(A)-r-1$, $h^2(E_{C,(A,V)})=0$;
\item $E_{C,(A,V)}$ is globally generated off the base locus of $\omega_C\otimes A^\vee$;
\item $\chi(E_{C,(A,V)}^\vee\otimes E_{C,(A,V)})=2(1-\rho(g,r,d))$, and hence $E_{C,(A,V)}$ is non-simple if $\rho(g,r,d)<0$;
\item if $C\in \vert L\vert$ is general and $\mu_{0,(A,V)}:V\otimes H^0(\omega_C\otimes A^\vee)\to H^0(\omega_C)$ is the Petri map, then the simplicity of $E_{C,(A,V)}$ is equivalent to the injectivity of $\mu_{0,(A,V)}$.
\end{itemize}
\end{prop}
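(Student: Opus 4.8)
The plan is to extract the numerical invariants from the defining sequence of $E:=E_{C,(A,V)}$ and its dual, and to reserve the substantive work for the Petri statement. Write $i\colon C\hookrightarrow S$ for the inclusion and let
$$0\to F\to V\otimes\oo_S\to i_*A\to 0$$
be the defining sequence, so that $E=F^\vee$. Applying $\mathcal{H}om(-,\oo_S)$ and using that $i_*A$ is torsion, together with Grothendieck--Serre duality on $C$ (where $\omega_S=\oo_S$ gives $\ext^1(i_*A,\oo_S)\simeq i_*(\omega_C\otimes A^\vee)$), recovers the dual sequence
$$0\to V^\vee\otimes\oo_S\to E\to \omega_C\otimes A^\vee\to 0$$
quoted in the text. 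From either sequence $\rk E=\dim V=r+1$ and $c_1(E)=[C]=c_1(L)$. For $c_2$ and $\chi$ I would run Grothendieck--Riemann--Roch for $i_*A$, obtaining $\mathrm{ch}(i_*A)=[C]+(d-g+1)[\mathrm{pt}]$; subtracting this from $\mathrm{ch}(V\otimes\oo_S)=r+1$ and dualizing gives $c_2(E)=d$, and the Riemann--Roch formula $\chi(E)=\tfrac12 c_1(E)^2-c_2(E)+2\rk E$ on the $K3$ surface then yields $\chi(E)=g-d+2r+1$.

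Next, Serre duality on $S$ gives $h^2(E)=h^0(E^\vee)=h^0(F)$; but $H^0(F)$ is the kernel of the map $V=H^0(V\otimes\oo_S)\to H^0(i_*A)=H^0(C,A)$, which is the inclusion $V\hookrightarrow H^0(C,A)$ and hence injective, so $h^2(E)=0$. For $h^1$ I would compute $h^0(E)$ from the dual sequence: since $H^1(\oo_S)=0$ on a $K3$, it gives $h^0(E)=(r+1)+h^0(\omega_C\otimes A^\vee)$, and Riemann--Roch on $C$ together with Serre duality rewrites $h^0(\omega_C\otimes A^\vee)=h^1(C,A)=h^0(A)-d+g-1$; then $h^1(E)=h^0(E)-\chi(E)=h^0(A)-r-1$. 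Global generation off the base locus of $\omega_C\otimes A^\vee$ is local: away from $C$ the inclusion $V^\vee\otimes\oo_S\to E$ is an isomorphism, so the sections in $V^\vee$ already generate, while at a point $p\in C$ outside the base locus of the quotient, surjectivity of $H^0(E)\to H^0(\omega_C\otimes A^\vee)$ (again from $H^1(\oo_S)=0$) together with base point freeness of the quotient at $p$ shows, by a local computation, that the global sections generate the fibre $E(p)$.

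For the penultimate bullet I would compute the Mukai pairing. With $v(E)=(\,r+1,\,[C],\,g-d+r\,)$ one has $\chi(E^\vee\otimes E)=-\langle v(E),v(E)\rangle=2(r+1)(g-d+r)-(2g-2)$, and substituting $(r+1)(g-d+r)=g-\rho(g,r,d)$ collapses this to $2(1-\rho(g,r,d))$. Since $(E^\vee\otimes E)^\vee\simeq E^\vee\otimes E$, Serre duality gives $h^0=h^2$ here, so $2h^0(E^\vee\otimes E)=\chi(E^\vee\otimes E)+h^1\geq 2(1-\rho)$; when $\rho<0$ this forces $h^0(E^\vee\otimes E)\geq 1-\rho>1$, i.e. $E$ is non-simple.

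The hard part is the last bullet, which is the Gieseker--Petri mechanism. The plan is to apply $\Hom(E,-)$ to the dual sequence. The term $\Hom(E,V^\vee\otimes\oo_S)=V^\vee\otimes H^0(E^\vee)=V^\vee\otimes H^0(F)$ vanishes by the computation above, so $\Hom(E,E)$ injects into $\Hom(E,\omega_C\otimes A^\vee)$, which by adjunction equals $H^0\big(C,(E|_C)^\vee\otimes\omega_C\otimes A^\vee\big)$. Restricting the dual sequence to $C$ and using that $(A,V)$ is base point free should identify this space, and the connecting homomorphism of the $\Hom(E,-)$ sequence, with the transpose of the Petri map $\mu_{0,(A,V)}\colon V\otimes H^0(\omega_C\otimes A^\vee)\to H^0(\omega_C)$; the upshot is the numerical relation $\dim\Hom(E,E)=1+\dim\ker\mu_{0,(A,V)}$, so that simplicity is equivalent to injectivity of $\mu_{0,(A,V)}$. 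I expect the genuine obstacle to be controlling the extra cohomology that appears upon restricting $E$ to $C$ (the contribution of the normal bundle $N_{C/S}$): it is precisely here that the hypothesis that $C$ be general in $|L|$ is used, to guarantee the vanishing that makes the identification with $\ker\mu_{0,(A,V)}$ an equality rather than merely an inequality.
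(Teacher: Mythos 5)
The paper itself gives no proof of this Proposition: it records these statements as standard facts, with a pointer to the survey \cite{aprodu}, the last bullet being the content of Lazarsfeld's work \cite{lazarsfeld}. So the comparison here is with the literature, and your write-up has to stand on its own. For the first four bullets it does: the dualization via $\ext^1(i_*A,\oo_S)\simeq i_*(\omega_C\otimes A^\vee)$, the Chern class and Euler characteristic computations, the identification $h^2(E)=h^0(F)=0$, the count $h^1(E)=h^0(A)-r-1$, the fibrewise generation argument at points of $C$ off the base locus, and the Mukai-pairing computation $\chi(E^\vee\otimes E)=2(1-\rho(g,r,d))$ combined with $h^0=h^2$ are all correct and complete.

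The genuine gap is in the last bullet, and it is not a minor one. Your setup yields, for \emph{every} smooth $C$, the inclusion $\Hom(E,E)\hookrightarrow H^0(C,(E\vert_C)^\vee\otimes\omega_C\otimes A^\vee)$ and, after restricting the defining sequence to $C$ and tensoring, an exact sequence $0\to\oo_C\to (E\vert_C)^\vee\otimes\omega_C\otimes A^\vee\to M_{(A,V)}\otimes\omega_C\otimes A^\vee\to 0$, where $M_{(A,V)}:=\ker(V\otimes\oo_C\to A)$ and $H^0(M_{(A,V)}\otimes\omega_C\otimes A^\vee)=\ker\mu_{0,(A,V)}$ (this is where base point freeness enters). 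This gives only the inequality $\hom(E,E)\leq 1+\dim\ker\mu_{0,(A,V)}$, hence the implication that injectivity of the Petri map forces simplicity of $E$ -- valid for every smooth $C$, with no genericity needed. The converse implication is the real content of the statement, and your proposal does not prove it: the asserted equality $\hom(E,E)=1+\dim\ker\mu_{0,(A,V)}$ amounts (when $h^1(E)=0$; for incomplete $V$ there is a further term $V^\vee\otimes H^1(E)^\vee$ to control) to the vanishing of the connecting homomorphism $\delta:\ker\mu_{0,(A,V)}\to H^1(\oo_C)$, and nothing in your argument forces $\delta=0$; indeed the equality cannot hold for all smooth members of $\vert L\vert$, since for $\rho=0$ the bundle $E$ is simple (even rigid) while special curves in $\vert L\vert$ may carry Petri-violating series -- this is exactly why the Proposition assumes $C$ general. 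Proving the vanishing for general $C$ is Lazarsfeld's theorem, and it requires letting the pair $(C,(A,V))$ move with $E$ fixed: concretely, the parametrization of such pairs by the Grassmannian $G(r+1,H^0(E))$ (the map the paper calls $h_E$ in Counterexample \ref{fortuna}, whose fibres are $\mathbb{P}\Hom(E,\omega_C\otimes A^\vee)$) together with a dimension count against $\W^r_d(\vert L\vert)$. Your closing remark that the difficulty is "the contribution of the normal bundle" does name the right object, since $H^1(\oo_C)\simeq H^0(N_{C/S})^\vee$ by adjunction on the $K3$, but it is a heuristic, not an argument; as written, the hard direction of the equivalence is missing.
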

In the sequel, we will especially treat the cases where $\omega_C\otimes A^\vee$ is base point free. We will show that every non-simple globally generated LM bundle is the extension of two sheaves of a special type. With this in mind, we introduce the following:
\begin{df}\label{inizio}
A torsion free sheaf $E\in\mathrm{Coh}(S)$ is called a \emph{generalized Lazarsfeld-Mukai bundle} (g.LM bundle in the sequel) iff $h^2(E)=0$ and either
\begin{enumerate}
\item[\em (I)] $E$ is locally free and generated by global sections off a finite set;\\
or
\item[\em (II)] $E$ is globally generated.
\end{enumerate}
\end{df}
\begin{rem}
If conditions (I) and (II) of the above definition are both satisfied, then $E$ is the LM bundle associated with a smooth irreducible curve $C\subset S$ and a primitive linear series $(A,V)$ on $C$ (i.e., $E=E_{C,(A,V)}$). Furthermore, $V=H^0(A)$ if and only if $h^1(E)=0$.
\end{rem}
Definition \ref{inizio} is motivated  from the fact that g.LM bundles have properties similar to LM bundles. This is proved in what follows.
\begin{prop}\label{general}
Let $E$ be a g.LM bundle of type (I) and rank $r_E$, and $D_{t-1}(V)$ denote the degeneracy locus of the evaluation map $ev_V:V\otimes \oo_S\to E$ for a general subspace \mbox{$V\in G(t,H^0(E))$}. Then, the following are satisfied:
\begin{itemize}
\item[\em (i)] if $t\leq r_E-2$, then $D_{t-1}(V)$ is empty;
\item[\em (ii)] for $t=r_E-1$, the locus $D_{r_E-2}(V)$ consists of a finite number of distinct points, none of which lies in the locus where $E$ is not globally generated;
\item[\em (iii)] for $t=r_E$, the locus $D_{r_E-1}(V)$ is $1$-dimensional. If $c_1(E)^2>0$, then $D_{r_E-1}(V)$ is an integral curve $X$ (possibly singular at the points at which $E$ is not globally generated) and the cokernel of the evaluation map $ev_V$ is a torsion free sheaf of rank $1$ on $X$.
\end{itemize}

Furthermore, given any closed subset $K$ of the total space of $E$ such that $\dim\,K=r_E-t+1$, the image of $ev_V$ meets $K$ in a (possibly empty) finite set.
\end{prop}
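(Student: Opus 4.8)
The plan is to translate the rank-drop conditions into Schubert conditions on the Grassmannian $G(t,H^0(E))$ and to run an incidence-variety dimension count, upgrading afterwards to the finer assertions by transversality and by Saint-Donat's Bertini theorem. Since $E$ is of type (I) it is locally free of rank $r_E$ and globally generated off a finite set $\Gamma\subset S$; set $N:=h^0(E)$. Off $\Gamma$ the evaluation $\oo_S^{\oplus N}\to E$ is surjective with locally free kernel $\M$ of rank $N-r_E$, whose fibre is $\M_s=\{\sigma\in H^0(E):\sigma(s)=0\}$. The key point is that, for $s\notin\Gamma$, the map $ev_V|_s\colon V\to E_s$ fails to be injective exactly when $V\cap\M_s\neq 0$, so that
$$
D_{t-1}(V)\setminus\Gamma=\{s\in S\setminus\Gamma:\ V\cap\M_s\neq 0\}.
$$
First I would introduce the incidence variety $\mathcal J:=\{(s,V)\in (S\setminus\Gamma)\times G(t,N):\ V\cap\M_s\neq 0\}$ and study its two projections.

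For the dimension count, the fibre of the first projection $\mathcal J\to S\setminus\Gamma$ over $s$ is the special Schubert variety of $t$-planes meeting the fixed $(N-r_E)$-plane $\M_s$, which is irreducible of pure codimension $r_E-t+1$ in $G(t,N)$. Hence $\mathcal J$ is irreducible of dimension $\dim G(t,N)+(t-r_E+1)$, and the generic fibre dimension of the second projection $\mathcal J\to G(t,N)$ produces the three cases simultaneously: if $t\leq r_E-2$ the source has dimension $<\dim G(t,N)$, so the projection is not dominant and $D_{t-1}(V)\setminus\Gamma=\emptyset$ for general $V$, which gives (i); if $t=r_E-1$ the general fibre is $0$-dimensional, giving (ii); and if $t=r_E$ it is $1$-dimensional, giving (iii). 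The finitely many points of $\Gamma$ must then be handled directly: one verifies that for general $V$ the Schubert condition $V\cap\M_s\neq 0$ stays proper at each $s\in\Gamma$, so that general $V$ does not degenerate there, which is precisely the claim in (ii) that the degeneracy points avoid the non-globally-generated locus.

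To obtain the refinements I would proceed as follows. In case (ii), reducedness (the \emph{distinctness} of the points) follows by checking that, for general $V$, the Schubert intersection along the generic fibre of $\mathcal J\to G(t,N)$ is transverse, so $D_{r_E-2}(V)$ is a reduced $0$-cycle. In case (iii) the decisive observation is that, since $\dim V=t=r_E=\rk E$, the determinant $\det(ev_V)$ is a section of $\det E=\oo_S(c_1(E))$ whose zero scheme is exactly $D_{r_E-1}(V)$. As $V$ varies these sections span a subsystem of $|\det E|$ that is base-point free off $\Gamma$ (because $E$ is globally generated there), hence has no fixed component; since $c_1(E)^2>0$, Saint-Donat's Theorem \ref{bertini}(i) shows that the general member is an integral curve $X$, smooth away from $\Gamma$. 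Finally $ev_V\colon \oo_S^{\oplus r_E}\to E$ is an isomorphism off $X$ and drops rank by exactly one generically along the integral curve $X$, so the standard structure theory of such degeneracy loci identifies its cokernel with a torsion-free rank-$1$ sheaf on $X$.

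For the concluding assertion I would set up the analogous universal image $\{(s,e,V):\ e\in\im(ev_V|_s)\}\subset \mathrm{Tot}(E)\times G(t,N)$ and project to $K$: over a point $e\in K$ lying above $s$ the fibre is the Schubert-type locus $\{V:\ e\in\im(ev_V|_s)\}$ of codimension $r_E-t$ in $G(t,N)$, whence a fibre-dimension count over $G(t,N)$ bounds $\dim(\im(ev_V)\cap K)$ for general $V$ by the expected value $\dim K-(r_E-t)$. I expect the genuine obstacle to lie here and at $\Gamma$: the naive count only controls dimension up to the fibre directions of $\mathrm{Tot}(E)\to S$, so passing from this bound to actual finiteness forces $K$ (and the degeneracy at the finitely many non-globally-generated points) to be in suitably general position with respect to the projection to $S$, and it is the bookkeeping at $\Gamma$ — where $\M$ is no longer locally free and the Schubert conditions may fail to be proper — that will demand the most care.
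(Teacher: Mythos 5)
Your incidence--variety count off the bad locus is sound, and it amounts to a hands-on version of the general position statement that the paper imports from \cite[B.9.1]{fulton}; but the proof does not go through at the places you yourself defer, and these are exactly where the content of the proposition lies. First, the points of $\Gamma$: for $s\in\Gamma$ the space $\M_s=\{\sigma\in H^0(E):\sigma(s)=0\}$ has dimension $N-r_E+c_s$, where $c_s\geq 1$ is the corank of the evaluation map $H^0(E)\to E_s$, so the Schubert locus $\{V\in G(t,N):V\cap\M_s\neq 0\}$ has codimension $r_E-c_s-t+1$ in $G(t,N)$; for $t=r_E-1$ this is positive only if $c_s=1$, and for $t\leq r_E-2$ only if $c_s\leq r_E-t$. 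Hence ``the Schubert condition stays proper at each $s\in\Gamma$'' is not a routine verification but precisely the assertion at stake, and it cannot be settled without controlling the fibre dimensions of the cokernel $\tau=E/\tilde{E}$ of the evaluation at those points. This is what the paper's detour through the auxiliary sequence $0\to V_1\otimes\oo_S\stackrel{ev_1}{\longrightarrow}\overline{E}\stackrel{p}{\longrightarrow}E\to\tau\to 0$, with $\overline{E}$ a globally generated vector bundle in the sense of \cite{green}, is designed for: one fixes $ev_1$, takes $V_2\subset H^0(\overline{E})$ general, and applies general position to the globally generated bundle $\overline{E}$, so that the behaviour of $(ev_1,ev_2)$ along $\mathrm{supp}(\tau)$ is pinned to that of the fixed map $ev_1$ and only then pushed down to $E$ via $p$. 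Your direct approach on $E$ has no mechanism playing this role, so the step you flag as ``demanding the most care'' is in fact a missing idea, not bookkeeping.

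Two of the finer claims are also asserted rather than proved, and your route makes them harder. In (iii) you invoke Saint-Donat's Theorem \ref{bertini} for the sub-linear system of $\vert\det E\vert$ spanned by the sections $\det(ev_V)$; that theorem concerns complete linear systems, so it does not apply verbatim, and what you actually need is a Bertini-type irreducibility argument for this subsystem (in particular, ruling out that it is composed with a pencil). The paper instead realizes the degeneracy curve as the zero scheme of a general section of an explicit globally generated rank-one torsion-free quotient $\tilde{E}'$ of a modification of $\overline{E}$, a construction which simultaneously yields the purity (torsion-freeness on $X$) of the cokernel that you dismiss as ``standard structure theory''. Finally, for the last assertion your own count gives $\dim\bigl(\im(ev_V)\cap K\bigr)\leq\dim K-(r_E-t)=1$, not $0$, and as you concede this does not prove finiteness: the intersection must be computed fibrewise, since at a general $s\in S$ the $t$-plane $\mathrm{ev}_s(V)\subset E_s$ and the fibre of $K$ over $s$ are expected to meet only at the origin, and the genuine claim is that the locus of $s$ where they meet nontrivially is finite. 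The paper extracts this from the general position result applied to $\overline{E}$ and to the preimage $p^{-1}(K)$; in your setup this step is simply left open.
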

\begin{proof}
We look at the short exact sequence
\begin{equation}\label{mo}
0\to\tilde{E}\to E\to \tau\to 0,
\end{equation}
where $\tilde{E}$ is a globally generated subsheaf of $E$ satisfying $H^0(\tilde{E})\simeq H^0(E)$, and $\tau$ is a $0$-dimensional sheaf supported on the points at which $E$ is not globally generated. Since $h^1(\tilde{E})=h^1(E)+l(\tau)$, there exists an exact sequence
\begin{equation}\label{ma}
0\to V_1\otimes \oo_S\stackrel{ev_1}{\longrightarrow} \overline{E}\stackrel{p}{\longrightarrow} E\to\tau\to 0,
\end{equation}
where $\dim\,V_1=l(\tau)$ and the cokernel of $ev_1$ coincides with $\tilde{E}$. It can be easily shown that $\overline{E}$ is a globally generated vector bundle on $S$ satisfying $h^0(\overline{E})=h^0(E)+l(\tau)$ and $h^i(\overline{E})=h^i(E)$ for $i=1,2$ (cf. \cite[Lemma 1.6]{green}). 

Take a general $V_2\in G(t, H^0(\overline{E}))$. By a general position argument (cf. \cite[B.9.1]{fulton}), if $t\leq r_E-2$ (respectively $t=r_E-1$) one can choose $V_2$ such that the degeneracy locus of the evaluation map $ev=(ev_1,ev_2):(V_1\oplus V_2)\otimes\oo_S\to \overline{E}$ coincides with that of $ev_1$ (resp. is $0$-dimensional and for all $x\in\mathrm{supp}(\tau)$ the kernel of $ev_x$ coincides with that of $(ev_1)_x$). Thus, $(i)$ and $(ii)$ follow because the map induced by $p$ on the global sections sends a general $V_2\in G(t, H^0(\overline{E}))$ to a general point of the Grassmannian $G(t,H^0(E))$.

Given a general subspace $V_2\in G(r_E-1, H^0(\overline{E}))$, we consider the following commutative diagram:
$$
\xymatrix{
&&0&0\\
0\ar[r]&V_1\otimes \oo_S\ar[r]_{ev_1'}&\overline{E}'\ar[u]\ar[r]&\det E\otimes I_\xi\ar[u]\ar[r]&\tau\ar[r]&0\\
0\ar[r]&V_1\otimes \oo_S\ar@{=}[u]\ar[r]_{ev_1}&\overline{E}\ar[u]\ar[r]&E\ar[u]\ar[r]&\tau\ar@{=}[u]\ar[r]&0.\\
&&V_2\otimes\oo_S\ar[u]\ar@{=}[r]&V_2\otimes\oo_S\ar[u]\\
&&0\ar[u]&0\ar[u]\\
}
$$
The sheaf $\overline{E}'$ is locally free and $\xi$ is a $0$-dimensional subscheme of $S$ disjoint from $\mathrm{supp}(\tau)$. The cokernel $\tilde{E}'$ of $ev_1'$ is a globally generated, torsion free sheaf of rank $1$ on $S$. The vanishing locus $X$ of a general section $\alpha'\in H^0(\tilde{E}')$ is $1$-dimensional; if the inequality $c_1(E)^2>0$ is satisfied, then $X$ is an integral curve, which is smooth whenever $\tau$ is curvilinear. One can lift $\alpha'$ to a section ${\alpha}\in H^0(\overline{E})$. Having denoted by $V$ the image of $\langle V_2,\alpha\rangle$ in $H^0(E)$, the evaluation map $ev_V$ is injective and its cokernel is a pure sheaf of dimension $1$ supported on $X$. Hence, item $(iii)$ follows.  

Concerning the last part of the statement, one has $\dim\,p^{-1}(K)\leq \dim\,K+l(\tau)$. If $V_2\in G(t,H^0(\overline{E}))$ is general, then \cite[B.9.1]{fulton} implies that the image of the evaluation map $ev_2:V_2\otimes\oo_S\to \overline{E}$ meets $p^{-1}(K)$ in at most a finite number of points; this concludes the proof.
\end{proof}
As regards g.LM bundles of type (II), the following analogue of Proposition \ref{general}(iii) holds; we only give a sketch of the proof since we do not use it in the rest of the paper.
\begin{prop}
Let $E$ be a g.LM bundle of type (II) and rank $r_E$, and let $V\in G(r_E,H^0(E))$ be general. Then, the evaluation map $ev_V:V\otimes \oo_S\to E$ is injective and its cokernel is  a pure sheaf $B$ of dimension $1$ on $S$. If moreover $c_1(E)^2>0$, then $B$ is a torsion free sheaf of rank $1$ on an integral curve $X\subset S$.
\end{prop}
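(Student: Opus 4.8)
The plan is to follow the strategy of Proposition \ref{general}(iii), but the argument simplifies because a g.LM bundle of type (II) is globally generated \emph{everywhere}, so no auxiliary modification $\overline E$ is needed; on the other hand $E$ need not be locally free, and controlling the cokernel requires a short homological argument. First I would establish injectivity of $ev_V$. Since $E$ is globally generated, the evaluation $H^0(E)\otimes\oo_S\to E$ is surjective, and at a point $x$ where $E$ is locally free of rank $r_E$ the induced map $H^0(E)\to E_x$ is surjective. A general-position argument (as in \cite[B.9.1]{fulton}) then shows that for general $V\in G(r_E,H^0(E))$ the fibre map $V\to E_x$ is an isomorphism at a general $x\in S$, so $ev_V$ is generically an isomorphism. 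Its kernel is therefore a torsion subsheaf of the locally free sheaf $V\otimes\oo_S$, hence zero, and $ev_V$ is injective. We thus obtain a short exact sequence
\begin{equation}\label{evseq}
0\to V\otimes\oo_S\xrightarrow{ev_V} E\to B\to 0,
\end{equation}
with $B$ supported on the zero divisor of $\det(ev_V)\in H^0(\det E)$, i.e.\ on the degeneracy locus $D_{r_E-1}(V)\in|c_1(E)|$, which is $1$-dimensional.

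Next I would prove purity of $B$ by dualizing \eqref{evseq}. Since $V\otimes\oo_S$ is locally free one has $\ext^i(V\otimes\oo_S,\oo_S)=0$ for $i\geq 1$, and since $E$ is torsion free on the smooth surface $S$ it has projective dimension $\leq 1$ (Auslander--Buchsbaum), so $\ext^i(E,\oo_S)=0$ for $i\geq 2$. Applying $\ext^\bullet(-,\oo_S)$ to \eqref{evseq} and reading off the tail of the long exact sequence then yields $\ext^2(B,\oo_S)=0$. If $T\subset B$ denoted a nonzero maximal $0$-dimensional subsheaf, the sequence $0\to T\to B\to B/T\to 0$, together with the vanishing $\ext^3=0$ on the surface $S$, would produce a surjection $\ext^2(B,\oo_S)\twoheadrightarrow\ext^2(T,\oo_S)$, whence $\ext^2(T,\oo_S)=0$; but duality for finite-length sheaves gives $\ext^2(T,\oo_S)\neq 0$ as soon as $T\neq 0$, a contradiction. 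Hence $B$ has no $0$-dimensional subsheaf and is a pure sheaf of dimension $1$.

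Finally, assume $c_1(E)^2>0$ and set $X:=D_{r_E-1}(V)$. A general-position argument shows that for general $V$ the rank of $ev_V$ drops by exactly $1$ at a general point of $X$, so $B$ is generically of rank $1$ along $X$; combined with the purity established above, $B$ is a torsion-free rank-$1$ sheaf on $X$. I expect the integrality of $X$ to be the main obstacle: a priori $X$ is merely \emph{some} member of $|c_1(E)|$, and its irreducibility must be extracted from the positivity $c_1(E)^2>0$ by means of Theorem \ref{bertini}(i)---using that, in the relevant situations, $|c_1(E)|$ has no fixed components and hence a general member is smooth and irreducible---after checking that the linear divisors $\det(ev_V)$ sweep out a sufficiently general subsystem of $|c_1(E)|$ as $V$ varies in $G(r_E,H^0(E))$. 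This is precisely the point at which one mirrors the corresponding (only sketched) step in the proof of Proposition \ref{general}(iii).
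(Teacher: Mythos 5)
Your first two steps are correct, and your purity argument is genuinely different from the paper's: the paper never dualizes the evaluation sequence, whereas your use of $\ext^2(E,\oo_S)=0$ (homological dimension $\leq 1$ of a torsion free sheaf on a smooth surface) together with local duality for $0$-dimensional sheaves is a clean, self-contained way to exclude punctual subsheaves of $B$. The paper's proof is instead a one-line reduction to the type (I) case: from the sequence (\ref{ancora}), $0\to E\to E^{\vee\vee}\to\kappa\to 0$, the double dual $E^{\vee\vee}$ is locally free (reflexive on a smooth surface), satisfies $h^2(E^{\vee\vee})=0$, and is generated off the finite set $\supp(\kappa)$ by the subspace $H^0(E)\subset H^0(E^{\vee\vee})$; hence it is a g.LM bundle of type (I), Proposition \ref{general}(iii) applies to it (its proof only uses that the chosen space of sections generates off a finite set), and the conclusion descends to $E$ because the cokernels $B=\mathrm{coker}(V\otimes\oo_S\to E)$ and $B'=\mathrm{coker}(V\otimes\oo_S\to E^{\vee\vee})$ sit in an exact sequence $0\to B\to B'\to\kappa\to 0$, so that $B$ is a nonzero subsheaf of a rank $1$ torsion free sheaf on the integral curve $X$, with finite quotient.

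Your last step, however, has a genuine gap, and the repair you sketch would fail. A type (II) g.LM bundle need not be locally free (if it were, it would be an honest LM bundle and there would be nothing to prove), and at a point $x$ where $E$ is not locally free the fibre $E\otimes k(x)$ has dimension $>r_E$, so $ev_V$ cannot be surjective at $x$ for \emph{any} $V$; consequently every determinantal divisor $X_V$ (the zero divisor of $\det ev_V$) contains the finite set $\mathrm{Sing}(E)$. Since $\det E$ is globally generated, a general member of $\vert c_1(E)\vert$ avoids $\mathrm{Sing}(E)$; the divisors $X_V$ are therefore \emph{never} general members of $\vert c_1(E)\vert$, no matter how $V$ varies, and Theorem \ref{bertini}(i) --- a statement about the general member of the full linear system --- cannot be invoked for them. ``Checking that the $X_V$ sweep out a sufficiently general subsystem'' is precisely the point that is false here. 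Nor is this how Proposition \ref{general}(iii) actually proceeds, even in the locally free case: there one fixes $r_E-1$ general sections, forms the globally generated rank $1$ quotient $\tilde{E}'$, and applies a Bertini-type irreducibility argument to the auxiliary linear system $\mathbb{P}H^0(\tilde{E}')$, which has only finitely many base points; the hypothesis $c_1(E)^2>0$ enters to guarantee that the associated map has two-dimensional image, so the system is not composed with a pencil. The clean way to close your gap is the paper's double-dual reduction above, which moreover yields purity and the rank $1$ statement at once, making your (nice) homological argument logically unnecessary.
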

 \begin{proof}
We consider the short exact sequence 
 \begin{equation}\label{ancora}
 0\to E\to E^{\vee\vee}\to\kappa\to 0,
 \end{equation}
 where $\kappa$ is a $0$-dimensional sheaf on $S$. It is enough to observe that $H^0(E)\subset H^0(E^{\vee\vee})$ generates $E^{\vee\vee}$ off the support of $\kappa$ and to apply Bertini's Theorem. \end{proof}
 Our next goal is to find a lower bound for the second Chern class of a g.LM bundles of given rank. It is convenient to introduce the following:
\begin{df}
Let $E$ be a g.LM bundle. The {\em Clifford index of $E$} is: $$\Cliff(E):=c_2(E)-2(\rk\,E-1).$$
\end{df}
\begin{rem}
If $E=E_{C,A}$ for a smooth irreducible curve $C\subset S$ and $A\in\Pic(C)$, one has the equality $\Cliff(E_{C,A})=\Cliff(A)$.
\end{rem}
Starting from this observation, we prove the following:
\begin{prop}\label{cliff}
Let $E$ be a g.LM bundle such that $c_1(E)^2>0$. If $E$ is of type (I), then the following inequality is satisfied:
$$
\Cliff(E)\geq 2h^1(E)+l(\tau),
$$
where $\tau$ is the $0$-dimensional sheaf appearing in the exact sequence (\ref{mo}). If instead $E$ is of type (II), we have:
$$
\Cliff(E)\geq \Cliff(E^{\vee\vee})+l(\kappa),
$$
where $\kappa$ is the $0$-dimensional sheaf appearing in the exact sequence (\ref{ancora}).
\end{prop}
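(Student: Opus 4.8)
The plan is to reduce both cases to a single Clifford-type estimate for a \emph{globally generated, locally free} g.LM bundle, and from there to Clifford's theorem on an integral curve. The workhorse is the identity coming from Riemann--Roch on $S$ together with $h^2=0$: for any g.LM bundle $F$ one has
\[
\Cliff(F)=\tfrac12 c_1(F)^2-h^0(F)+h^1(F)+2,
\]
which follows from $\chi(F)=h^0(F)-h^1(F)=2\,\rk F+\tfrac12 c_1(F)^2-c_2(F)$. I will use this repeatedly to trade statements about $c_2$ for statements about cohomology.

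Type (II) is immediate. Applying the additivity of the Chern character to (\ref{ancora}) gives $c_1(E^{\vee\vee})=c_1(E)$ and $c_2(E^{\vee\vee})=c_2(E)-l(\kappa)$, with equal ranks; hence $\Cliff(E)=\Cliff(E^{\vee\vee})+l(\kappa)$, which is the asserted inequality (in fact an equality). One also reads off from (\ref{ancora}) that $E^{\vee\vee}$ is locally free, globally generated off $\supp\kappa$, and has $h^2(E^{\vee\vee})=0$, so it is a g.LM bundle of type (I) and $\Cliff(E^{\vee\vee})$ is meaningful; no further input is required in this case.

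For type (I) I would first replace $E$ by the globally generated vector bundle $\overline E$ of the sequence (\ref{ma}). Since $c_1(\overline E)=c_1(E)$, $h^0(\overline E)=h^0(E)+l(\tau)$ and $h^1(\overline E)=h^1(E)$, the displayed identity yields $\Cliff(E)=\Cliff(\overline E)+l(\tau)$, so it suffices to prove the cleaner bound $\Cliff(\overline E)\geq 2h^1(\overline E)$ for $F:=\overline E$, which is globally generated, locally free and still satisfies $c_1(F)^2>0$. To this bundle I would apply the type (II) analogue of Proposition~\ref{general}(iii): a general $V\in G(\rk F, H^0(F))$ produces $0\to \oo_S^{\,\rk F}\to F\to B'\to 0$ with $B'$ a rank-$1$ torsion-free sheaf on an integral curve $X'\in|c_1(F)|$ of arithmetic genus $p_a(X')=1+\tfrac12 c_1(F)^2$. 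Taking cohomology (and using $h^1(\oo_S)=0$, $h^2(\oo_S)=1$, $h^2(F)=0$) gives $h^0(B')=h^0(F)-\rk F$ and $h^1(B')=h^1(F)+\rk F$, and a direct substitution reduces $\Cliff(F)\geq 2h^1(F)$ to the symmetric Clifford inequality
\[
h^0(B')+h^1(B')\leq p_a(X')+1.
\]
Its hypotheses hold: $h^1(B')=h^1(F)+\rk F\geq 1$, and $h^0(B')=h^0(F)-\rk F\geq 1$ because a globally generated locally free sheaf with $c_1^2>0$ cannot have $h^0$ equal to its rank (otherwise the evaluation map would be a surjection between locally free sheaves of the same rank, hence an isomorphism, forcing $c_1=0$).

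The main obstacle is precisely this last inequality, i.e. Clifford's theorem for a rank-$1$ torsion-free sheaf on the integral curve $X'$. Because $X'\in|c_1(F)|$ may be singular, the classical statement for smooth curves does not literally apply, and what is genuinely needed is the generalized Clifford theorem for integral Gorenstein curves (every curve on the smooth surface $S$ is Gorenstein, so Serre duality on $X'$ is available and the usual proof, comparing $B'$ with its adjoint $\mathcal{H}om_{\oo_{X'}}(B',\omega_{X'})$, goes through). When $|c_1(F)|$ happens to be base-point-free one could instead take a smooth member via Saint-Donat's Theorem~\ref{bertini} and cite the classical result; but since the integral singular curves of Proposition~\ref{general}(iii) occur for real, I expect the generalized Clifford estimate on $X'$ to be the crux of the argument, the remaining steps being the bookkeeping with Chern characters recorded above.
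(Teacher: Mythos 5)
Your reductions coincide with the paper's: the type (II) statement is the same Chern--class bookkeeping (indeed an equality), and in the type (I) case the paper likewise passes to the globally generated, locally free bundle $\overline{E}$ of sequence (\ref{ma}), so that everything rests on the single inequality $\Cliff(F)\geq 2h^1(F)$ for $F:=\overline{E}$. Your Riemann--Roch identity, the cohomology of $0\to\oo_S^{\oplus\rk F}\to F\to B'\to 0$, and the equivalence of $\Cliff(F)\geq 2h^1(F)$ with the symmetric bound $h^0(B')+h^1(B')\leq p_a(X')+1$ are all correct.

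The gap is in the last step, and it is self-inflicted. At this point $F$ satisfies conditions (I) and (II) of Definition \ref{inizio} simultaneously: it is locally free, globally generated, with $h^2(F)=0$ and $c_1(F)^2>0$. The Remark following Definition \ref{inizio} then says that $F=E_{C,(A,V)}$ for a \emph{smooth} irreducible curve $C$ and a primitive series $(A,V)$: because $F$ is globally generated and locally free, the corank-two locus of a general evaluation map $V\otimes\oo_S\to F$ is empty for dimension reasons and the degeneracy curve is smooth by a standard transversality argument (note that this is \emph{not} Saint-Donat applied to a general member of $\vert\det F\vert$, which is what your parenthetical escape route suggests and which would indeed be insufficient, since the curve must arise as a degeneracy locus of sections of $F$). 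This is exactly the paper's route: adding $h^1(F)$ trivial summands produces $0\to\oo_S^{\oplus h^1(F)}\to E_{C,A}\to F\to 0$, whence $\Cliff(F)=\Cliff(A)+2h^1(F)\geq 2h^1(F)$ by the classical Clifford theorem on $C$. By instead quoting the type (II) proposition---which is designed for possibly non-locally-free sheaves and therefore only delivers an \emph{integral} curve $X'$---you throw away precisely the hypotheses guaranteeing smoothness, and you must then pay with Clifford's theorem for rank-one torsion-free sheaves on singular Gorenstein curves. That statement is true, but your justification (``the usual proof goes through'') is not adequate: the classical argument relies on the finiteness of decompositions of an effective divisor into two effective divisors, and this finiteness genuinely fails at singular points (already at a cusp, the zero scheme of a section can contain a one-parameter family of length-two effective Cartier subdivisors). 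A correct proof requires a different mechanism, e.g.\ Serre duality $H^1(B')^\vee\simeq\Hom(B',\omega_{X'})$ combined with the bilinear-map (Hopf) lemma applied to the pairing $H^0(B')\otimes\Hom(B',\omega_{X'})\to H^0(\omega_{X'})$, which has no zero divisors since $X'$ is integral. So, as written, the crux of your argument is an unproven external theorem neither stated nor available in the paper; either supply such an argument, or---far simpler, and what the paper does---observe that the curve can be taken smooth.
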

\begin{proof}
First of all, assume $E$ satisfies both conditions (I) and (II) in Definition \ref{inizio}. Then, there exist a smooth irreducible curve $C\subset S$ and $A\in\Pic(C)$ such that $E_{C,A}$ sits in a short exact sequence:
\begin{equation}
0\to\oo_S^{\oplus h^1(E)}\to E_{C,A}\to E\to0.
\end{equation}
It turns out that
\begin{equation}\label{uno}
\Cliff(E)=\Cliff(E_{C,A})+2h^1(E)\geq 2h^1(E),
\end{equation}
where the last inequality follows from Clifford's Theorem. 

Now, we consider the case where $E$ is of type (I). The globally generated vector bundle $\overline{E}$ appearing in the exact sequence (\ref{ma}) satisfies $h^1(\overline{E})=h^1(E)$, and $c_2(\overline{E})=c_2(E)+l(\tau)$, and $\rk\,\overline{E}=\rk\,E+l(\tau)$. Inequality (\ref{uno}) for $\overline{E}$ yields:
\begin{equation}\label{due}
\Cliff(E)=\Cliff(\overline{E})+l(\tau)\geq 2h^1(E)+l(\tau).
\end{equation}

In order to cover the case of a g.LM bundle $E$ of type (II), it is enough to remark that $\rk\,E^{\vee\vee}=\rk\,E$ and $c_2(E^{\vee\vee})=c_2(E)-l(\kappa)$.
\end{proof}
\begin{cor}\label{index}
Let $E$ be a g.LM bundle of rank $r_E$ and $c_1(E)^2>0$. Then, $\Cliff(E)\geq 0$ and equality holds only in the following cases:
\begin{itemize}
\item[(a)] $r_E=1$ and $E$ is a globally generated line bundle;
\item[(b)] $E=E_{C,\omega_C}$ for some smooth irreducible curve $C\subset S$ of genus $g=r_E\geq 2$;
\item[(c)] $r_E>1$ and $E=E_{C,(r_E-1)g^1_2}$ for some smooth hyperelliptic curve $C\subset S$ of genus $g>r_E$.
\end{itemize}
\end{cor}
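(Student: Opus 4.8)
The plan is to read off both the inequality $\Cliff(E)\ge 0$ and its equality cases from Proposition \ref{cliff} combined with the equality clause of Clifford's theorem, handling the two types of g.LM bundle separately and reducing type (II) to type (I). For the positivity statement, if $E$ is of type (I) there is nothing to do: Proposition \ref{cliff} gives $\Cliff(E)\ge 2h^1(E)+l(\tau)\ge 0$ directly.

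For $E$ of type (II) I would first upgrade the double dual to a type (I) g.LM bundle and apply the previous bound to it. Concretely, $E^{\vee\vee}$ is reflexive, hence locally free on the smooth surface $S$; it is globally generated off the finite support of $\kappa$ because $H^0(E)\subset H^0(E^{\vee\vee})$ already generates $E$; and $h^2(E^{\vee\vee})=0$ since $H^2(E^{\vee\vee})$ is a quotient of $H^2(E)=0$ via the sequence (\ref{ancora}). As $c_1(E^{\vee\vee})=c_1(E)$ we have $c_1(E^{\vee\vee})^2>0$, so the type (I) bound yields $\Cliff(E^{\vee\vee})\ge 0$, and Proposition \ref{cliff} then gives $\Cliff(E)\ge \Cliff(E^{\vee\vee})+l(\kappa)\ge 0$.

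For the equality cases I would start from type (I). If $\Cliff(E)=0$, the chain $0=\Cliff(E)\ge 2h^1(E)+l(\tau)\ge 0$ forces $h^1(E)=0$ and $l(\tau)=0$; thus $\tau=0$ and $E$ is a globally generated vector bundle, i.e. it satisfies both (I) and (II). By the Remark following Definition \ref{inizio}, $E=E_{C,(A,V)}$ for a smooth irreducible curve $C\subset S$ and a primitive linear series $(A,V)$, and $h^1(E)=0$ forces $V=H^0(A)$, so $E=E_{C,A}$. Primitivity makes $\omega_C\otimes A^\vee$ base point free, whence $h^1(A)=h^0(\omega_C\otimes A^\vee)\ge 1$ and $A$ is special; moreover the identity underlying (\ref{uno}) gives $\Cliff(A)=\Cliff(E_{C,A})=\Cliff(E)=0$. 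I would then invoke the equality case of Clifford's theorem for the special bundle $A$: either $A=\oo_C$, or $A=\omega_C$, or $C$ is hyperelliptic and $A=k\,g^1_2$ with $1\le k\le g-1$. Translating these into ranks: $A=\oo_C$ gives $E\simeq L$, a globally generated line bundle with $r_E=1$, which is (a); $A=\omega_C$ gives $r_E=h^0(\omega_C)=g$, and $c_1(E)^2=2g-2>0$ forces $g\ge 2$, which is (b); and $A=k\,g^1_2$ gives $r_E=h^0(A)=k+1>1$, where, since $(g-1)g^1_2=\omega_C$ already lies in (b), I restrict to $1\le k\le g-2$, i.e. $r_E=k+1\le g-1$, equivalently $g>r_E$, which is (c). To close the argument I would note that the type (II) equality case reduces to this one: $\Cliff(E)=0$ forces $l(\kappa)=0$ and $\Cliff(E^{\vee\vee})=0$, so $E=E^{\vee\vee}$ is locally free, hence of type (I), and the classification above applies verbatim, producing no new cases.

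I expect the genuinely delicate part to be the bookkeeping in this last step rather than the positivity, which is essentially immediate from Proposition \ref{cliff}. Two points require care: first, one must confirm that $A$ is forced to be \emph{special} (so that the equality clause of Clifford's theorem, and not merely its inequality, is available), which I extract from the primitivity provided by the Remark after Definition \ref{inizio}; and second, one must correctly separate the hyperelliptic family $(r_E-1)g^1_2$ from the canonical bundle. These two families coincide exactly when $k=g-1$, and it is precisely this coincidence that pins down the sharp constraints $g=r_E$ in case (b) against $g>r_E$ in case (c).
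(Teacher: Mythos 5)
Your proof is correct and follows essentially the same route as the paper's: the inequality comes straight from Proposition \ref{cliff}, and equality forces $E$ to be a globally generated LM bundle $E_{C,A}$ with $h^1(E)=0$ and $\Cliff(A)=0$, after which the equality case of Clifford's theorem yields cases (a)--(c), with the bound $g>r_E$ in (c) obtained exactly as in the paper by ruling out $A=\omega_C$. The only difference is that you spell out details the paper leaves implicit, namely that $E^{\vee\vee}$ is itself a type (I) g.LM bundle with the same $c_1$, and that primitivity forces $A$ to be special so that the equality clause of Clifford's theorem genuinely applies.
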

\begin{proof}
Proposition \ref{cliff} trivially implies the first part of the statement. If $\Cliff(E)=0$, then $E$ is both locally free and globally generated and satisfies $h^1(E)=0$. Hence, $E$ is the LM bundle associated with a smooth irreducible curve $C\subset S$ and a primitive line bundle $A\in\Pic(C)$ such that $\Cliff(A)=0$. Case (a) occurs when $A$ is the structure sheaf of $C$. If instead $\mathrm{deg}(A)>0$, then Clifford's Theorem implies that $A$ is either the canonical sheaf $\omega_C$ (case (b)), or a multiple of a linear series of type $g^1_2$ on $C$ (case (c)). In the latter case, the inequality $g>r_E$ follows from the fact that the residual of the linear series of type $(r_E-1)g^1_2$ is globally generated (it is a quotient of $E$) and by imposing $(r_E-1)g^1_2\neq\omega_C$.
\end{proof}
With regard to item (c) of Corollary \ref{index}, we recall the classification of hyperelliptic linear systems on $S$ due to Saint-Donat (\cite[Theorem 5.2]{donat}).
\begin{thm}
Let $C\subset S$ be a smooth hyperelliptic curve of genus $g\geq 2$ and set $L:=\oo_S(C)$. Then, one of the following occurs:
\begin{itemize}
\item The equality $c_1(L)^2=2$ holds.
\item There is a smooth, irreducible curve $B\subset S$ of genus $2$ satisfying $L\simeq \oo_S(2B)$.
\item There exists an irreducible elliptic curve $\Sigma\subset S$ such that $c_1(L)\cdot\Sigma=2$.
\end{itemize}
\end{thm}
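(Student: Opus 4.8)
The plan is to realize the hyperelliptic structure of $C$ geometrically on $S$ through the morphism $\phi_L:S\to\mathbb{P}^g$ and to read off the three alternatives from the classification of surfaces of minimal degree. Throughout I take $g\geq 3$, since for $g=2$ every curve is hyperelliptic and adjunction on the $K3$ surface (using $K_S\simeq\oo_S$) gives $c_1(L)^2=2g-2=2$, which is the first alternative. First I would check that $\phi_L$ is a morphism whose restriction to $C$ is the canonical map. As $C$ is irreducible, $|L|$ has no fixed components, and since $c_1(L)^2=2g-2>0$ Theorem \ref{bertini} shows that $|L|$ is base point free and $h^1(S,L)=0$. Twisting the structure sequence of $C$ by $L$ yields $0\to\oo_S\to L\to\omega_C\to 0$ (using $\omega_C\simeq L\otimes\oo_C$), and $h^1(S,\oo_S)=0$ forces the restriction $H^0(S,L)\to H^0(C,\omega_C)$ to be surjective; hence $\phi_L|_C=\phi_{\omega_C}$, which for a hyperelliptic curve is a $2{:}1$ cover of a rational normal curve of degree $g-1$ spanning a hyperplane.

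The central step is to deduce that $\phi_L$ itself has degree $2$ onto a surface $S':=\phi_L(S)$ of minimal degree $g-1$. A non-degenerate integral surface in $\mathbb{P}^g$ has degree at least $g-1$, so the relation $\deg\phi_L\cdot\deg S'=c_1(L)^2=2g-2$ forces $\deg\phi_L\leq 2$. To exclude $\deg\phi_L=1$ I would invoke the constancy of the Clifford index in $|L|$ (cf. \cite{green}): since $\Cliff(C)=0$, every smooth member of $|L|$ is hyperelliptic, so for a general $C'\in|L|$ the map $\phi_L|_{C'}=\phi_{\omega_{C'}}$ is again $2{:}1$ and $(\phi_L)_*[C']$ equals twice a rational normal curve. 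Were $\phi_L$ birational, the projection formula would identify this cycle with the general hyperplane section $S'\cap H$, which is reduced and integral by Bertini's theorem — a contradiction. Therefore $\deg\phi_L=2$ and $\deg S'=g-1$, so $S'$ is a surface of minimal degree and is, by the classical del Pezzo--Bertini classification, either the Veronese surface in $\mathbb{P}^5$, a rational normal scroll, or a cone over a rational normal curve.

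It then remains to pull back the geometry of $S'$. If $S'$ is the Veronese surface then $g=5$ and $\oo_{\mathbb{P}^5}(1)$ restricts to $\oo_{\mathbb{P}^2}(2)$; setting $B:=\phi_L^*\oo_{\mathbb{P}^2}(1)$ gives $L\simeq\oo_S(2B)$ with $B^2=2$, and Theorem \ref{bertini}(i) produces a smooth irreducible genus $2$ curve in $|B|$, which is the second alternative. In the remaining cases I pull back a line $\ell$ of the ruling (through the vertex, in the cone case): the class $\Sigma:=\phi_L^*\ell$ satisfies $\Sigma^2=2\,\ell^2=0$ and $c_1(L)\cdot\Sigma=(\deg\phi_L)(\oo(1)\cdot\ell)=2$. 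By Theorem \ref{bertini}(ii) one has $\Sigma\equiv k\Sigma_0$ for some irreducible $\Sigma_0$ of arithmetic genus $1$, and the elliptic pencil $|\Sigma_0|$ restricts on $C$ to a map of degree $c_1(L)\cdot\Sigma_0$; a degree-$1$ value would make $C$ rational, which is impossible for $g\geq 3$, so $c_1(L)\cdot\Sigma_0\geq 2$ and hence $k=1$ with $c_1(L)\cdot\Sigma_0=2$. This yields the third alternative.

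The step I expect to be the main obstacle is precisely the degree computation for $\phi_L$: ruling out the birational case for the given, possibly special, curve $C$ is what genuinely requires propagating hyperellipticity to the general member of $|L|$, and this is where the constancy of the Clifford index (or Saint-Donat's direct analysis of the image surface in \cite{donat}) does the essential work. By contrast, the identification of the three projective models and the subsequent pull-back computations are comparatively routine, once the $2{:}1$ structure has been secured.
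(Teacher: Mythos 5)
You should first be aware that the paper contains no proof of this statement: it is quoted verbatim from Saint-Donat (\cite[Theorem 5.2]{donat}), so the only meaningful comparison is with Saint-Donat's original argument, and your strategy essentially reproduces it (restriction of $\phi_L$ to $C$ is the canonical map, degree bound $\deg\phi_L\cdot\deg S'=c_1(L)^2$, del Pezzo--Bertini classification of surfaces of minimal degree, pull-back of the rulings). You have also correctly located the crux: for the single given curve $C$ the cycle identity $[S'\cap H_C]=2[R]$ is \emph{not} by itself a contradiction with birationality, since integral surfaces do admit non-reduced special hyperplane sections (the Veronese surface already does); one genuinely needs the \emph{general} member of $\vert L\vert$ to be hyperelliptic, and your Bertini argument then works. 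Citing \cite{green} for this propagation is legitimate, because Green--Lazarsfeld's vector-bundle proof is independent of Saint-Donat's classification; but note that \emph{inside this paper} the Green--Lazarsfeld statement is derived as the corollary of Theorem \ref{duro} in Section 4, and Step V of the proof of Theorem \ref{duro} applies precisely the classification you are proving (to the hyperelliptic curve $C_1$, splitting into the cases $C_1\sim 2B$ and $\Sigma\cdot C_1=2$). So your appeal must be to the original external proof, as you do; any appeal to the paper's own corollary would be circular.

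There is, however, a genuine gap in the cone case. If $S'$ is the cone over a rational normal curve with vertex $v$, a ruling line $\ell$ through $v$ is a Weil divisor that is not Cartier, and as a $\mathbb{Q}$-divisor it satisfies $\ell^2=1/(g-1)$, not $0$; consequently both ``$\Sigma^2=2\ell^2=0$'' and the projection-formula computation of $c_1(L)\cdot\Sigma$ are unjustified, and $\phi_L^*\ell$ is not even an integral class. This case really occurs: take $\Pic(S)=\mathbb{Z}[E]\oplus\mathbb{Z}[\Gamma]$ with $E^2=0$, $\Gamma^2=-2$, $E\cdot\Gamma=1$ and $L=4E+2\Gamma$; then $g=5$, the general $C\in\vert L\vert$ is hyperelliptic, $\phi_L$ contracts $\Gamma$ and maps $S$ $2$:$1$ onto the cone over a rational normal quartic, and the $\mathbb{Q}$-pull-back of a ruling line is $E+\tfrac{1}{2}\Gamma$. (In fact, for $g\geq 3$ the cone case forces $\phi_L$ to contract a curve to $v$, since otherwise $L=(g-1)\phi_L^*\ell$ in $\Pic(S)$ would give $\Sigma^2=2/(g-1)\in 2\mathbb{Z}$, i.e.\ $g=2$; this also undermines your appeal to Theorem \ref{bertini}(ii), whose hypothesis of no base components you have not checked here.) The repair: define $\Sigma$ as the closure of $\phi_L^{-1}(\ell\setminus\{v\})$ for a general ruling line $\ell$. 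It is irreducible, since otherwise its two components would be smooth rational curves (each mapping birationally onto $\ell\simeq\mathbb{P}^1$) moving in a family covering $S$, impossible on a $K3$ surface; then $c_1(L)\cdot\Sigma=2$ by pushing forward, $\Sigma^2=0$ follows from the Hodge index theorem together with the evenness of the intersection form, and adjunction gives $p_a(\Sigma)=1$, yielding the third alternative directly, without Theorem \ref{bertini}(ii). With this patch (and the routine check, which does work, that in the smooth scroll and Veronese cases the pulled-back systems are base point free), your proof is complete; in the smooth cases your computations are all valid since $\ell$ is Cartier there.
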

Heretofore, we have treated g.LM bundles $E$ satisfying $c_1(E)^2>0$. The following result concerns g.LM bundles whose first Chern class has vanishing self-intersection.
\begin{prop}\label{elliptic}
Let $E$ be a g.LM bundle such that $c_1(E)^2=0$. Then, $E$ is both locally free and globally generated and satisfies $c_2(E)=0$. Furthermore, if $h^1(E)=0$, then \mbox{$E=\oo_S(\Sigma)^{\oplus\rk E}$} for an irreducible elliptic curve $\Sigma\subset S$.
\end{prop}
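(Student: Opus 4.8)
The plan is to show that $E$ is, up to the reductions built into Definition \ref{inizio}, a direct sum of line bundles pulled back from an elliptic pencil; everything in the statement then follows by a Chern class computation. Throughout set $r:=\rk E$ and $N:=c_1(E)$, so that $N^2=0$, and recall that a torsion free sheaf on a surface is locally free off a finite set, so that in both cases (I) and (II) the sheaf $E$ is locally free and globally generated away from a finite set.

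First I would pin down $N$. Since $E$ is generated by global sections off a finite set, so is $\det E=N$; in particular $|N|$ has no divisorial fixed components, and $r$ generically independent sections of $E$ produce a nonzero section of $N$, so $N$ is effective. If $N=\oo_S$, then $r$ general sections give a map $\oo_S^{\oplus r}\to E$ whose determinant is a nonzero constant, hence an isomorphism, forcing $E\simeq\oo_S^{\oplus r}$ and $h^2(E)=r>0$, against the definition of a g.LM bundle; thus $N\neq\oo_S$. Theorem \ref{bertini}(ii) then yields $N=\oo_S(k\Sigma)$ for some $k\geq 1$ and an irreducible curve $\Sigma$ with $p_a(\Sigma)=1$ and $\Sigma^2=0$. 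The pencil $|\Sigma|$ is base point free and defines an elliptic fibration $\pi\colon S\to\mathbb{P}^1$ with $\pi^*\oo_{\mathbb{P}^1}(1)=\oo_S(\Sigma)$.

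Next I would analyse the restriction of $E$ to the fibres. A general fibre $\Sigma_t$ is a smooth elliptic curve avoiding both the locus where $E$ fails to be locally free and the one where it fails to be globally generated; on it $E|_{\Sigma_t}$ is a globally generated vector bundle of degree $c_1(E)\cdot\Sigma=k\,\Sigma^2=0$. By Atiyah's classification (or directly, by peeling off nowhere vanishing general sections) a globally generated degree $0$ bundle on a genus $1$ curve is trivial, so $E|_{\Sigma_t}\simeq\oo_{\Sigma_t}^{\oplus r}$ and $h^0(\Sigma_t,E|_{\Sigma_t})=r$ for general $t$. Granting that this holds for every $t$, Grauert's theorem makes $\pi_*E$ locally free of rank $r$ and compatible with base change, so the evaluation map $\pi^*\pi_*E\to E$ is an isomorphism on each fibre, hence an isomorphism. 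Writing $\pi_*E\simeq\bigoplus_{i=1}^r\oo_{\mathbb{P}^1}(a_i)$ gives $E\simeq\bigoplus_{i=1}^r\oo_S(a_i\Sigma)$. The condition $h^2(E)=0$ excludes $a_i\leq 0$, since $h^2(\oo_S(a\Sigma))=h^0(\oo_S(-a\Sigma))>0$ for $a\leq 0$; hence each $a_i\geq 1$, so $E$ is locally free and globally generated, and $c_2(E)=\sum_{i<j}a_ia_j\,\Sigma^2=0$. Finally, if $h^1(E)=0$ then $0=\sum_i h^1(\oo_S(a_i\Sigma))=\sum_i(a_i-1)$ by Theorem \ref{bertini}(ii), forcing $a_i=1$ for all $i$ and $E\simeq\oo_S(\Sigma)^{\oplus r}$.

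The hard part is the step flagged as \emph{granting that this holds for every $t$}: I must rule out jumping of $h^0(\Sigma_t,E|_{\Sigma_t})$ above the generic value $r$ on the finitely many special fibres, namely the singular fibres of $\pi$ and the fibres meeting the finite locus where $E$ is not locally free or not globally generated. Since $\chi(\oo_{\Sigma_t})=0$ and the degree is $0$, one has $\chi(E|_{\Sigma_t})=0$ for every $t$, so it suffices to bound $h^0$ from above; here I would use that on each connected fibre of arithmetic genus $1$ the restriction is still generated by its global sections off finitely many points and of degree $0$ on every component, which pins $\det(E|_{\Sigma_t})$ to be trivial and forces $h^0=r$, ruling out jumping so that $\pi_*E$ is a genuine rank $r$ bundle. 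A posteriori, once $E\simeq\bigoplus\oo_S(a_i\Sigma)$ is established, local freeness makes this automatic, but the argument must be arranged so as not to presuppose it; this is the only delicate point, the remaining steps being routine Chern class bookkeeping together with Theorem \ref{bertini}.
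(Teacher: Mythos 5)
The step you yourself flag --- constancy of $h^0(\Sigma_t,E\vert_{\Sigma_t})$ on \emph{all} fibres --- is a genuine gap, and the mechanism you propose to close it does not work as stated. ``Degree $0$ on every component pins $\det(E\vert_{\Sigma_t})$ to be trivial and forces $h^0=r$'' is a non sequitur: already on a smooth elliptic fibre $F$, both $\oo_F(p)\oplus\oo_F(-p)$ and Atiyah's nonsplit self-extension of $\oo_F$ have trivial determinant and degree $0$ but $h^0=1<2$, so triviality of the determinant by itself gives nothing; what must carry the argument is generic global generation, which you invoke but never actually use. Worse, in case (II) of Definition \ref{inizio} the restriction of $E$ to a fibre through a point where $E$ fails to be locally free acquires torsion (controlled by $\mathrm{Tor}_1(\kappa,\oo_{\Sigma_t})$ with $\kappa:=E^{\vee\vee}/E$), so it is not a vector bundle there, $h^0$ is exactly the quantity that threatens to jump, and asserting that the restriction is a torsion-free sheaf of degree $0$ on every component comes close to presupposing $\kappa=0$, which is what the proposition is designed to prove. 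Finally, fibres of an elliptic $K3$ can be non-reduced (Kodaira type $I_0^*$, for instance), so ``degree $0$ on every component'' and the injectivity of evaluation maps need care on the multiple components.

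The gap is closable along your lines, but by a different mechanism than the one you sketch: since $E$ is torsion free it is flat over $\mathbb{P}^1$, so $\chi(E\vert_{\Sigma_t})=0$ for \emph{every} $t$; letting $W\subseteq E\vert_{\Sigma_t}$ be the subsheaf generated by global sections (which has rank $r$ at every associated point of $\oo_{\Sigma_t}$, by generation off a finite set), $r$ general sections give an injection $\oo_{\Sigma_t}^{\oplus r}\hookrightarrow W$ with finite-length cokernel, whence $\chi(W)\geq r\,\chi(\oo_{\Sigma_t})=0$; combined with $0=\chi(E\vert_{\Sigma_t})=\chi(W)+l\bigl(E\vert_{\Sigma_t}/W\bigr)$ this forces $E\vert_{\Sigma_t}=W\simeq\oo_{\Sigma_t}^{\oplus r}$, giving $h^0=r$ on every fibre and, a posteriori, that no fibre meets $\supp(\kappa)$. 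Note that once this is in place your route is genuinely different from the paper's: the paper handles case (I) by killing $h^1$ through the extension $0\to\oo_S^{\oplus h^1(E)}\to E'\to E\to 0$ and then quoting \cite[Proposition (1.5)]{green} for the splitting, and it avoids fibrewise analysis in case (II) altogether by applying case (I) to the dual Lazarsfeld--Mukai construction $D(E)$, deducing $\ext^1(E,\oo_S)\simeq\ext^2(\kappa,\oo_S)=0$ and hence $\kappa=0$. Your fibration argument, once repaired, has the merit of being self-contained (it effectively reproves the Green--Lazarsfeld input), but as submitted the crucial step is asserted rather than proved, and the reason offered for it is incorrect.
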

\begin{proof}
Let $E$ be of type (I). Up to replacing $E$ with the bundle $E'$ sitting in the short exact sequence
\begin{equation}\label{treno}
0\to \oo_S^{\oplus h^1(E)}\to E'\to E\to 0,
\end{equation}
we can assume $h^1(E)=0$. Then, Proposition (1.5) in \cite{green} yields $E=\oo_S(\Sigma)^{\oplus \rk E}$ for an irreducible elliptic curve $\Sigma\subset S$ and our statement follows trivially.

Now, let $E$ be of type (II) and consider the short exact sequence
$$
0\to E\to E^{\vee\vee}\to\kappa\to 0.
$$

Since $E^{\vee\vee}$ is a g.LM bundle of type (I), it satisfies $c_2(E^{\vee\vee})=0$. Therefore,  in order to prove that $\kappa=0$, it is enough to show that $c_2(E)=0$, too. To this purpose we may assume $h^1(E)=0$. We define $D(E)$ to be the dual of the kernel of the evaluation map $H^0(E)\otimes\oo_S\twoheadrightarrow E$, and we obtain the following exact sequence:
$$
0\to E^\vee\to H^0(E)^\vee\otimes\oo_S\to D(E)\to\mathcal{E}xt^1(E,\oo_S)\to 0.
$$

The condition $h^1(E)=0$ yields the isomorphism $H^0(D(E))\simeq H^0(E)^\vee$. As a consequence, $D(E)$ is a LM bundle of type (I) which is globally generated off the support of $\mathcal{E}xt^1(E,\oo_S)\simeq  \mathcal{E}xt^2(\kappa,\oo_S)$. Since $c_1(D(E))^2=0$, we conclude that $\mathcal{E}xt^2(\kappa,\oo_S)=0$ and $\kappa=0$ as well.

\end{proof}

\begin{rem}\label{postino}
Proposition \ref{elliptic} trivially implies that, if $E$ is a g.LM bundle satisfying $c_1(E)^2=0$, then $\Cliff(E)=-2(\rk E-1)$.
\end{rem}

\section{Coherent systems}
We give some background information about coherent systems and refer to \cite{lepotier,he} for details.
\begin{df}
A coherent system of dimension $d$ on a smooth projective variety $Y$ is a pair $(E, V)$, where $E\in\mathrm{Coh}(Y)$ is a $d$-dimensional sheaf and $V$is a vector subspace of $H^0(E)$. A morphism of coherent systems $f:(E,V)\to (E',V')$ is a morphism of coherent sheaves $f:E\to E'$ such that $f(V)\subset V'$.
\end{df}
Given a coherent system $\Lambda=(E,V)$, we denote by $\Ext^i(\Lambda,-)$ the derived functors of the functor $\Hom(\Lambda,-)$. We recall the following result:
\begin{prop}[\cite{he}, Corollaire 1.6]\label{he}
Given two coherent systems $\Lambda=(E,V)$ and $\Lambda'=(E',V')$ on $Y$, there exists an exact sequence:
\begin{eqnarray*}
0\to\Hom(\Lambda,\Lambda')&\to&\Hom(E,E')\to\Hom(V,H^0(E')/V')\\
\to\Ext^1(\Lambda,\Lambda')&\to&\Ext^1(E,E')\to\Hom(V,H^1(E'))\\
\to\Ext^2(\Lambda,\Lambda')&\to&\Ext^2(E,E')\to\Hom(V,H^2(E'))\to\cdots.
\end{eqnarray*}
\end{prop}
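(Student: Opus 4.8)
The plan is to realize the functor $\Hom(\Lambda,-)$ as the zeroth cohomology of an explicit hyper-$\Hom$ complex and to read off the whole exact sequence from a single distinguished triangle. First I would record the degree-zero statement straight from the definition of morphisms of coherent systems: a sheaf map $f\colon E\to E'$ sends $V$ into $V'$ if and only if the composite $V\hookrightarrow H^0(E)\xrightarrow{H^0(f)}H^0(E')\twoheadrightarrow H^0(E')/V'$ vanishes. Hence $\Hom(\Lambda,\Lambda')$ is the kernel of the natural map $\Hom(E,E')\to\Hom(V,H^0(E')/V')$, which already yields the exactness of the first three terms.

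To produce the higher terms I would package this kernel as the fibre of a morphism of complexes. Set $\widetilde{R\Gamma}(\Lambda'):=\mathrm{cone}\big(V'\to R\Gamma(E')\big)$, where $V'$ sits in degree $0$ via the inclusion $V'\hookrightarrow H^0(E')$; since this inclusion is injective, the complex is cohomologically concentrated in non-negative degrees, with $H^0=H^0(E')/V'$ and $H^i=H^i(E')$ for $i\ge 1$. The evaluation $\varepsilon\colon V\otimes\mathcal{O}_Y\to E$, composed with the projection $R\Gamma(E')\to\widetilde{R\Gamma}(\Lambda')$, induces
$$\phi\colon \mathrm{RHom}_{\mathcal{O}_Y}(E,E')\longrightarrow \mathrm{RHom}_k\big(V,\widetilde{R\Gamma}(\Lambda')\big),$$
and I would set $\mathrm{RHom}(\Lambda,\Lambda'):=\mathrm{fib}(\phi)$. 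Since $V$ is a finite-dimensional vector space, $\Hom(V,-)$ is exact, so the target has cohomology $\Hom(V,H^0(E')/V')$ in degree $0$ and $\Hom(V,H^i(E'))$ in degree $i\ge 1$, while the source has cohomology $\Ext^i(E,E')$. The long exact cohomology sequence of the triangle $\mathrm{RHom}(\Lambda,\Lambda')\to\mathrm{RHom}(E,E')\xrightarrow{\phi}\mathrm{RHom}_k(V,\widetilde{R\Gamma}(\Lambda'))\xrightarrow{+1}$ is then, term by term, the asserted sequence; as both source and target are supported in degrees $\ge 0$ one gets $H^{<0}(\mathrm{fib}\,\phi)=0$, and the computation of $H^0(\mathrm{fib}\,\phi)$ recovers the kernel from the first step.

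The substantive point, and where I expect the real work to lie, is to identify $H^i(\mathrm{fib}\,\phi)$ with the derived functors $\Ext^i(\Lambda,-)$ of $\Hom(\Lambda,-)$. I would argue by universality of $\delta$-functors. The assignment $\Lambda'\mapsto H^\bullet(\mathrm{fib}\,\phi)$ is functorial and sends a short exact sequence of coherent systems, which in He's framework induces short exact sequences of the underlying sheaves and of the attached vector spaces, to a long exact sequence, because both $\mathrm{RHom}(E,-)$ and $\widetilde{R\Gamma}(-)$ are triangulated in $\Lambda'$; thus it is a cohomological $\delta$-functor agreeing with $\Hom(\Lambda,-)$ in degree $0$ by the first step. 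It then remains to verify effaceability in positive degrees. Here I would use He's facts that coherent systems form an abelian category with enough injectives, together with the observation that the forgetful functor $(E',V')\mapsto E'$ has the exact left adjoint $F\mapsto(F,0)$ and therefore preserves injectives. Consequently, for an injective $J=(E'_J,V'_J)$ the sheaf $E'_J$ is injective, so $\Ext^{>0}(E,E'_J)=0$ and $H^{>0}(E'_J)=0$; both complexes collapse to degree $0$ and $\phi$ is surjective there by the extension property of $J$, whence $H^{>0}(\mathrm{fib}\,\phi)(J)=0$. Effaceability then forces $H^i(\mathrm{fib}\,\phi)\simeq\Ext^i(\Lambda,\Lambda')$, completing the argument. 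The main obstacle is precisely this reconciliation of the concrete mapping-cone construction with the abstract derived functors; once He's abelian-category framework is in place it reduces to the adjunction and acyclicity bookkeeping sketched above.
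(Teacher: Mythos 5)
The paper does not prove this statement at all: it is quoted directly from He (\cite{he}, Corollaire 1.6), so there is no internal argument to compare yours with, and your proposal must be judged as a reconstruction of He's result. Your overall architecture --- realize $\Hom(\Lambda,\Lambda')$ as $H^0$ of the fibre of $\phi\colon \mathrm{RHom}(E,E')\to\mathrm{RHom}_k\bigl(V,\mathrm{cone}(V'\to R\Gamma(E'))\bigr)$ and then match the cohomology of that fibre with the derived functors $\Ext^i(\Lambda,-)$ by a $\delta$-functor/effaceability argument --- is the right one. But the effaceability step contains a genuine gap, rooted in the sentence ``coherent systems form an abelian category with enough injectives.'' They do not form an abelian category: the morphism $(\oo_Y,0)\to(\oo_Y,H^0(\oo_Y))$ induced by $\mathrm{id}_{\oo_Y}$ is both a monomorphism and an epimorphism but not an isomorphism. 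The category in which He actually derives $\Hom(\Lambda,-)$ is the larger abelian category of pairs $(E',V')$ equipped with an \emph{arbitrary} linear map $V'\to H^0(E')$, with arbitrary $\oo_Y$-modules and vector spaces allowed (with coherent sheaves alone there are not enough injectives).

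This enlargement is not a formality for your argument: it changes the objects on which the vanishing must be verified. The injectives of the ambient category are direct summands of objects $J=(E_J,\,\Gamma_0\oplus H^0(E_J))$ with $E_J$ an injective $\oo_Y$-module and structure map the projection $\Gamma_0\oplus H^0(E_J)\twoheadrightarrow H^0(E_J)$; in particular the map $V'_J\to H^0(E_J)$ is surjective, with kernel $\Gamma_0$ typically nonzero, and is essentially never injective. So precisely on the objects where you invoke them, your two key assertions fail: $\mathrm{cone}\bigl(V'_J\to R\Gamma(E_J)\bigr)$ is \emph{not} concentrated in non-negative degrees (it is $\Gamma_0$ sitting in degree $-1$), and ``both complexes collapse to degree $0$ and $\phi$ is surjective there'' is false. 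The step can be repaired: for such $J$ the target complex is $\Hom(V,\Gamma_0)$ placed in degree $-1$, the source is $\Hom(E,E_J)$ in degree $0$, and the long exact sequence of the triangle still gives vanishing of $H^{>0}$ of the fibre; moreover the degree-zero identification must then be redone on the ambient category, where $\Hom(\Lambda,\Lambda')$ acquires the extra term $\Hom\bigl(V,\ker(V'\to H^0(E'))\bigr)$ matching $H^{-1}$ of the target. But as written, your proof applies coherent-system-only formulas to objects that are never coherent systems, so it does not go through without this correction. (A lesser point: cones are not functorial in the derived category, so to obtain an honest $\delta$-functor you should fix functorial injective or \v{C}ech resolutions.)
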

In particular, if $E$ is non-simple, then the coherent system $(E,H^0(E))$ is non-simple as well.

One can introduce a notion of stability for coherent systems depending on the choice of a polynomial $\alpha\in \mathbb{Q}[t]$ with positive leading coefficient. Having fixed $\alpha$, we define a polynomial $p^\alpha_{(E,V)}\in\mathbb{Q}[t]$ by setting
$$
p^\alpha_{(E,V)}(t):=\frac{\dim\,V}{r_E}\alpha(t)+p_E(t),
$$
where $p_E$ is the reduced Hilbert polynomial of $E$ and $r_E$ is its multiplicity. We say that $(E,V)$ is $\alpha$-semistable if $E$ is pure and any coherent subsheaf $F\subset E$ satisfies:
\begin{equation}\label{cs}
p^\alpha_{(F,V')}\leq p^\alpha_{(E,V)},\textrm{     with  }V':=V\cap H^0(F).
\end{equation}
The $\alpha$-stability for $(E,V)$ requires that inequality (\ref{cs}) be always strict and implies simplicity of $(E,V)$.

We specialise to the case of a smooth projective $K3$ surface $S$ and fix $\alpha$ equal to the Hilbert polynomial $p_S$ of $\oo_S$; for convenience, we will talk about (semi)stability when referring to $p_S$-(semi)stability. With this choice, inequality (\ref{cs}) becomes equivalent to the requirement:
\begin{equation}
\frac{\dim\,V'}{r_F}\leq\frac{\dim\,V}{r_E}\,\,\,\textrm{    and, if "=" holds, then }p_F(t)\leq p_E(t).
\end{equation}
\begin{rem}\label{facile}
If $(E,V)$ is a semistable coherent system of dimension $d$, then the evaluation map $ev_V:V\otimes\oo_S\to E$ is generically surjective. In particular, if $E$ has no torsion (i.e., $d=2$), we obtain that $\rk(E)\leq \dim\,V$. 
\end{rem}
Given an unstable coherent system $(E,V)$ such that $E$ is torsion free, we look at its Harder-Narasimhan filtration
$$
0\subset (HN_1(E),V_1)\subset\ldots\subset (HN_{s-1}(E),V_{s-1})\subset (HN_{s}(E),V_{s})=(E,V).
$$
We use the following:
\begin{lem}\label{pizza}
Assume that $E$ is globally generated by the sections in $V$ and $h^2(E)=0$. Then for all $i$ the following are satisfied:
\begin{enumerate}
\item $V_i=V\cap H^0(HN_i(E))$.
\item The sheaf $HN_i(E)$ is generically generated by the sections in $V_i$.
\item For any subsheaf $F\subseteq HN_i(E)$ with $H^0(F)\cap V\simeq V_i$, one has $h^2(F)=0$.
\end{enumerate}
The same holds true if $(E,V)$ is semistable and we replace the Harder-Narasimhan filtration with the Jordan-H\"older one.
\end{lem}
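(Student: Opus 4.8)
The plan is to establish the three assertions in the order (1), (2), (3), each relying on the previous ones, and to obtain the Jordan--Hölder version by the same arguments. For (1) the decisive input is the maximality built into the Harder--Narasimhan filtration. First I would treat $i=1$: by construction $(HN_1(E),V_1)$ is the maximal destabilizing subsystem, hence the subsystem of largest slope $\dim(V')/\rk(F)$. Were the inclusion $V_1\subseteq V\cap H^0(HN_1(E))$ strict, then $(HN_1(E),V\cap H^0(HN_1(E)))$ would be a subsystem with the \emph{same} underlying sheaf but strictly larger slope, contradicting maximality; so $V_1=V\cap H^0(HN_1(E))$. For general $i$ I would pass to the quotient coherent system $(E/HN_1(E),V/V_1)$, which is again globally generated (by the image of $V$) and has $h^2=0$ (from $h^2(E)=0$ and the surjection $E\twoheadrightarrow E/HN_1(E)$), and whose Harder--Narasimhan filtration is $(HN_i(E)/HN_1(E),V_i/V_1)$. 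Induction on the length $s$ then gives $V_i/V_1=(V/V_1)\cap H^0(HN_i(E)/HN_1(E))$, which combined with the case $i=1$ yields (1).

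For (2) I would induct on $i$ using Remark \ref{facile}. By (1) each graded piece $(HN_i(E)/HN_{i-1}(E),V_i/V_{i-1})$ is a well-defined semistable coherent system, so Remark \ref{facile} makes its evaluation map generically surjective; in particular $HN_1(E)$ is generically generated by $V_1$. At a general point $x$ where both $HN_{i-1}(E)$ is generated by $V_{i-1}$ and the graded piece is generated by $V_i/V_{i-1}$, a diagram chase on $0\to HN_{i-1}(E)\to HN_i(E)\to HN_i(E)/HN_{i-1}(E)\to 0$ shows that $V_i$ generates $(HN_i(E))_x$. Moreover, exploiting that the ambient $E$ is globally generated by $V$ on all of $S$ (not merely generically), one upgrades this to generation off a finite set, i.e. $c_1(\im(V_i\otimes\oo_S\to HN_i(E)))=c_1(HN_i(E))$.

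For (3), Serre duality on the $K3$ surface gives $h^2(F)=\dim\Hom(F,\oo_S)$, so it suffices to prove $\Hom(F,\oo_S)=0$. I would first establish $h^2(HN_i(E))=0$ for every $i$. The top step $HN_s(E)=E$ is the hypothesis; for $i<s$ note that by Remark \ref{facile} every graded piece has slope $\dim(V_j/V_{j-1})/\rk\geq 1$, and since the slopes strictly decrease, $\mu_j>1$ for $j<s$. A semistable coherent system of slope $>1$ admits no nonzero map to $\oo_S$, since its image would be a rank-one quotient of slope $\leq 1$, violating semistability; hence $h^2(HN_j(E)/HN_{j-1}(E))=0$ and induction along the filtration gives $h^2(HN_i(E))=0$. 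Finally, the hypothesis $H^0(F)\cap V\simeq V_i$ together with (1) forces $V_i\subseteq H^0(F)$, so by (2) the subsheaf $F$ contains $\im(V_i\otimes\oo_S)$, which equals $HN_i(E)$ off a finite set; hence $HN_i(E)/F$ is $0$-dimensional and $\Ext^1(HN_i(E)/F,\oo_S)=0$. Applying $\Hom(-,\oo_S)$ to $0\to F\to HN_i(E)\to HN_i(E)/F\to 0$ then squeezes $\Hom(F,\oo_S)$ between $\Hom(HN_i(E),\oo_S)=0$ and $\Ext^1(HN_i(E)/F,\oo_S)=0$, giving $h^2(F)=0$. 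The same chain of arguments applies verbatim to the Jordan--Hölder filtration of a semistable $(E,V)$, the only change being that all graded pieces are stable of one and the same slope.

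The step I expect to be the main obstacle is the codimension-one control in (2), namely upgrading the generic generation supplied by Remark \ref{facile} to generation off a finite set (equivalently, the vanishing of the $1$-dimensional part of $HN_i(E)/F$). Semistability of the graded pieces alone does not force this: the image of the sections can share the slope of its piece while having strictly smaller $c_1$, so no contradiction arises from the coherent-system stability inequality. The argument must therefore genuinely use that $E$ is globally generated by $V$ on the whole of $S$, and it is precisely this $0$-dimensionality that makes $\Ext^1(HN_i(E)/F,\oo_S)$ vanish and hence drives the proof of (3).
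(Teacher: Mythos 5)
Your parts (1) and (2) are correct and coincide with the paper's own argument: (1) follows from maximality of the destabilizing subsystem, (2) from Remark \ref{facile}, and the reduction to the case $i=1$ via the quotient systems $(E/HN_i(E),V/V_i)$ is exactly the induction the paper uses. The genuine gap is in (3). Your proof hinges on the claim, announced at the end of (2) and used crucially in (3), that global generation of $E$ by $V$ "upgrades" the generic generation of $HN_i(E)$ by $V_i$ to generation off a \emph{finite} set; this is what would make $HN_i(E)/F$ zero-dimensional and hence $\Ext^1(HN_i(E)/F,\oo_S)=0$ in your squeeze. You never prove this claim (you yourself flag it as the main obstacle), and in fact it is false: sections of $V$ outside $V_i$ can generate $E$ along a curve where $V_i$ fails to generate $HN_i(E)$, so the non-generated locus of $(HN_i(E),V_i)$ may contain a divisor. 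The paper's own framework is built around exactly this phenomenon: Theorem \ref{riso} and Remark \ref{minimo} only assert that $E_1=HN_1(E)$ is an \emph{elementary modification} of a g.LM bundle of type (I), i.e. it sits in $0\to\tilde{E_1}\to E_1\to T\to 0$ with $T$ a pure sheaf of dimension $1$, possibly nonzero; and Proposition \ref{generated}, together with the no-($-2$)-curve hypothesis in Theorem \ref{chiana} and the nef line bundle escape route of Proposition \ref{carino}, exists precisely to deal with a nonempty $1$-dimensional non-generated locus. When $HN_i(E)/F$ is $1$-dimensional your squeeze collapses, since $\Ext^1(HN_i(E)/F,\oo_S)$ is then nonzero in general (for instance $\Ext^1(\oo_D,\oo_S)\simeq H^0(\oo_D(D))\neq 0$ for an effective divisor $D$ with $D^2\geq 0$).

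The paper's proof of (3) needs no codimension control at all, and this is the step you should adopt. For $F\subseteq HN_1(E)$ with $V_1\subseteq H^0(F)$ one has $\rk\,F=\rk\,HN_1(E)<\dim V_1$; given a nonzero $\psi:F\to\oo_S$, the image $\im\,\psi\subseteq\oo_S$ is a rank-one torsion-free sheaf generically generated by its sections, hence equal to $\oo_S$ (a proper ideal sheaf $I_Z$ with $Z\neq\emptyset$ has $h^0(I_Z)=0$); thus $\psi$ is surjective, and its kernel $K$, equipped with $H^0(K)\cap V_1$ of dimension at least $\dim V_1-1$, has slope $(\dim V_1-1)/(\rk\,F-1)>\dim V_1/\rk\,F$, destabilizing the semistable system $(HN_1(E),V_1)$ -- a contradiction. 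Note that this uses only generic generation plus semistability, which is why the lemma can afford to state (2) with "generically" only. A secondary inaccuracy in your write-up: the deduction "$\mu_j>1$ for $j<s$ since the slopes strictly decrease" is not valid, because the Harder--Narasimhan slopes here are polynomials $p^\alpha$, and their strict decrease only forces a \emph{weak} decrease of the ratios $\dim/\rk$. The correct source of strictness is the paper's opening observation: a sheaf with $h^2=0$ cannot be globally generated by exactly rank-many sections (it would be trivial); applied to the last quotient $E/HN_{s-1}(E)$ this gives $\mu_s>1$, hence $\mu_j>1$ for all $j$.
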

\begin{proof}
First of all, note that the vanishing $h^2(E)=0$ implies $\dim V>\rk\,E$; indeed, if we had $\dim V=\rk\,E$, then $E$ would be isomorphic to the direct sum of $\rk\,E$ copies of $\oo_S$ and $h^2(E)>0$. 

Since for all $i$ the pair $(E/HN_i(E), V/V_i)$ satisfies the same properties as $(E,V)$, by induction on the length of the filtration it is enough to prove the lemma for the maximal destabilizing coherent system $(HN_1(E),V_1)$, which is semistable. Then, the first two points trivially follow from maximality and Remark \ref{facile}. 

Let $F$ be as in point (3); then, one has $\rk\,F=\rk\, HN_1(E)<\dim\,V_1$ and the sheaf $T$ appearing in the short exact sequence
$$
0\to F\to HN_1(E)\to T\to 0
$$
is a torsion sheaf. By Serre duality, one has to show that $\Hom(F,\oo_S)=0$. Assume there exists a non-zero morphism $\psi:F\to \oo_S$ and denote its kernel by $K$. Since $\im\,\psi$ is torsion free and generically generated by global sections, then $\psi$ is surjective and the pair $(K,H^0(K)\cap V_1)$ destabilizes $(HN_1(E),V_1)$, thus yielding a contradiction.
\end{proof}

\begin{prop}\label{generated}
Let $E$ be a vector bundle on $S$ generically generated by its global sections. Assume moreover that every subsheaf $F\subseteq E$ for which $H^0(F)\simeq H^0(E)$ also satisfies $H^2(F)=0$.\\ Then, either the $1$-dimensional locus where $E$ is not generated by its global sections is a (possibly empty) union of ($-2$)-curves, or there exists a nef line bundle $N$ along with an injective map $N\hookrightarrow E$.
\end{prop}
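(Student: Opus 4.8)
The plan is to pass to the subsheaf generated by global sections and read off the $1$-dimensional non-generation locus from it. Let $\tilde E\subseteq E$ be the image of the evaluation map $H^0(E)\otimes\oo_S\to E$. By construction $\tilde E$ is globally generated and every global section of $E$ factors through it, so $H^0(\tilde E)\simeq H^0(E)$; the hypothesis then forces $H^2(\tilde E)=0$, which by Serre duality on the $K3$ surface means $\Hom(\tilde E,\oo_S)=0$. Since $E$ is locally free and $\tilde E$ is a torsion-free subsheaf of the same rank, the quotient $E/\tilde E$ is torsion, its support is exactly the non-generation locus, and its $1$-dimensional part is the locus $Z$ in the statement, with $c_1(\tilde E)=c_1(E)-[Z']$ for an effective class $[Z']$ supported on $Z$. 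Because $\tilde E$ is globally generated, $c_1(\tilde E)\cdot C=\deg(\tilde E|_C)\geq 0$ for every curve $C$, so $c_1(\tilde E)$ is nef.

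Next I would invoke the geometry of curves on a $K3$. Any irreducible curve $\Gamma\subset S$ satisfies $\Gamma^2\geq -2$, and either $\Gamma^2=-2$ (a ($-2$)-curve) or $\Gamma^2\geq 0$; in the latter case $\Gamma$ is nef (it meets every distinct irreducible curve nonnegatively and meets itself nonnegatively), and Riemann--Roch gives $h^0(\oo_S(\Gamma))\geq 2+\Gamma^2/2\geq 2$, so $\Gamma$ \emph{moves}. If every component of $Z$ is a ($-2$)-curve we are in the first alternative. Otherwise fix a component $\Gamma$ of $Z$ with $\Gamma^2\geq 0$; the natural candidate for the second alternative is $N:=\oo_S(\Gamma)$, and producing the injection $N\hookrightarrow E$ amounts to finding a nonzero $s\in H^0(E)$ vanishing along $\Gamma$, i.e.\ to showing that the restriction $H^0(E)\to H^0(E|_\Gamma)$ fails to be injective.

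The technical heart of the argument is exactly this last point. Along $\Gamma$ the global sections of $E$ take values in a proper subbundle $W'\subsetneq E|_\Gamma$ of corank $m\geq 1$ (the fiberwise image of $H^0(E)$), so one gets a surjection $E\twoheadrightarrow G$ onto a rank-$m$ sheaf supported on $\Gamma$ for which $H^0(E)\to H^0(G)$ is zero. Its kernel $E'$ is again a subsheaf of $E$ with $H^0(E')\simeq H^0(E)$, so the hypothesis applies and $\Hom(E',\oo_S)=0$, while $c_1(E')=c_1(E)-m\Gamma$. I would then exploit the extension $0\to E'\to E\to G\to 0$, in which $G$ lives on the \emph{moving} curve $\Gamma$, to force a section of $E$ to vanish along $\Gamma$: the idea is that a general member $\Gamma'\in|\Gamma|$ meets $\supp(E/\tilde E)$ in finitely many points, so $E$ is globally generated along $\Gamma'$, and the discrepancy between generation along the generic member $\Gamma'$ and the deficiency along the special member $\Gamma$ must be accounted for by sections vanishing on $\Gamma$. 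The condition $\Hom(\,\cdot\,,\oo_S)=0$ enters to rule out the degenerate configurations (a trivial quotient direction concentrated on $\Gamma$) in which no such section would appear.

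The main obstacle is precisely converting the \emph{fiberwise} deficiency of sections along $\Gamma$ into an \emph{honest} vanishing section: a priori the sections are only confined to the subbundle $W'\subset E|_\Gamma$, which does not force them to vanish. Overcoming this is where all three hypotheses are genuinely needed --- local freeness of $E$, global generation of $\tilde E$ together with $\Hom(\tilde E,\oo_S)=0$, and, crucially, that $\Gamma$ is nef and therefore \emph{moves} (contrast the rank-one case, where a nef curve can never be a fixed component, so the non-generation locus is automatically a union of ($-2$)-curves). The delicate verification is that the reduction to $E'$ does not simply expel $\Gamma$ from the non-generation locus without yielding the section; controlling this, via $h^0(\oo_S(\Gamma))\geq 2$, is the step I expect to require the most care.
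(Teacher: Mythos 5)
Your setup matches the paper's: you pass to the globally generated subsheaf $\tilde{E}$, use the hypothesis plus Serre duality to get $\Hom(\tilde{E},\oo_S)=0$, decompose the $1$-dimensional non-generation locus into irreducible components, and reduce to producing an injection $\oo_S(\Gamma)\hookrightarrow E$ for a component $\Gamma$ that is not a ($-2$)-curve. But the core of the proof --- actually producing that injection --- is missing, and you say so yourself ("the main obstacle", "the step I expect to require the most care"). The mechanism you sketch does not work as stated: knowing that sections of $E$ are confined fiberwise to a proper subsheaf $W'\subsetneq E|_\Gamma$, or that $E$ is globally generated along a general member $\Gamma'\in|\Gamma|$, gives no control on the kernel of $H^0(E)\to H^0(E|_\Gamma)$; there is no "conservation of deficiency" principle that converts the fiberwise defect into a section vanishing on $\Gamma$. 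Indeed your own surjection $E\twoheadrightarrow G$ only exhibits a quotient of $E|_\Gamma$ of rank $m$, which is in general strictly smaller than $\rk E$, and a subsheaf $E'\subseteq E$ of full rank whose quotient has rank $m<\rk E$ along $\Gamma$ does \emph{not} satisfy $E'\subseteq E(-\Gamma)$; so no section is forced to vanish.

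The paper's way around this is a rank-reduction trick, which is the ingredient you are missing. Writing $0\to\tilde{E}\to E\to T\to 0$ with $T$ pure of dimension $1$ and of generic rank $r_1$ along the (irreducible, after a reduction step) curve $\gamma$, one first notes $\tilde{E}$ is locally free (hence a g.LM bundle of type (I)) and then chooses a \emph{general} subspace $V\in G(\rk E-r_1, H^0(\tilde{E}))$; Proposition \ref{general} (the general position statement for g.LM bundles of type (I)) guarantees that the image of $ev_V$ meets the kernel locus $K$ of $\tilde{E}|_\gamma\to E|_\gamma$ in a finite reduced set. Quotienting the whole sequence by $V\otimes\oo_S$ produces $0\to\tilde{E}'\to E'\to T\to 0$ in which both $\tilde{E}'$ and $E'$ now have rank exactly $r_1$, i.e.\ the same rank as $T$ along $\gamma$. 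This full-rank coincidence is what forces $\det\tilde{E}=\det E(-\gamma)$ (if $r_1=1$), resp.\ $\tilde{E}'=E'(-\gamma)^{\vee\vee}$ (if $r_1>1$); combined with $H^0(\tilde{E}')\simeq H^0(E')$ it shows that \emph{every} section of $E'$ vanishes along $\gamma$, hence factors through an injection $\oo_S(\gamma)\hookrightarrow E'$. Finally, this map is lifted to $\oo_S(\gamma)\hookrightarrow E$ across $0\to V\otimes\oo_S\to E\to E'\to 0$, the obstruction lying in $V\otimes H^1(\oo_S(-\gamma))$, which vanishes by Saint-Donat's strong Bertini theorem since $\gamma$ is irreducible. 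Note also that your intuition about where mobility of $\Gamma$ enters is misplaced: the injection $\oo_S(\gamma)\hookrightarrow E$ is produced for every irreducible component, ($-2$)-curve or not, and the only role of the dichotomy is that $\oo_S(\gamma)$ is nef precisely when $\gamma^2\geq 0$; the vanishing $h^1(\oo_S(\gamma))=0$ used in the lifting holds in both cases.
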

\begin{proof}
There exists a short exact sequence
\begin{equation}\label{gg}
0\to \tilde{E}\to E\to T\to 0,
\end{equation}
where $T$ is a pure sheaf of dimension $1$ supported on a (possibly singular and reducible) curve $\gamma$, and the sheaf $\tilde{E}$ satisfies $H^0(\tilde{E})\simeq H^0(E)$ and is globally generated off a finite set. Since we are on a surface, the sheaf $T$ is reflexive (cf. \cite[Proposition 1.1.10]{lehn}), i.e., $T\simeq T^{\vee\vee}$ with $T^\vee:=\mathcal{E}xt^1(T,\oo_S)$. From (\ref{gg}) one gets that $\mathcal{E}xt^i(\tilde{E}, \oo_S)=0$ for $i=1,2$, hence $\tilde{E}$ is locally free. By assumption, $h^2(\tilde{E})=0$; therefore, $\tilde{E}$ is a g.LM bundle of type (I). Let $\gamma=\gamma_1\cup\cdots\cup\gamma_s$ be the decomposition of $\gamma$ in irreducible components and let $r_i$ denote the generic rank of $T$ along $\gamma_i$.

First of all, we reduce to the case $s=1$. Define $\gamma^i:=\gamma_1\cup\ldots\cup\hat{\gamma_i}\cup\ldots\gamma_s$ for all $1\leq i\leq s$. The restriction of $T$ to $\gamma^i$ may have some torsion $T_0(T\vert_{\gamma^i})$ (cf. \cite[3.1]{giulia} for a detailed discussion); set $T^{(i)}:=T\vert_{\gamma^i}/T_0(T\vert_{\gamma^i})$ and denote by $T_{(i)}$ the kernel of the surjective map $T\to T^{(i)}$, which is supported on $\gamma_i$. Consider the following commutative diagram:
$$
\xymatrix{
&0\\
&T_{(i)}\ar[u]&&0\\
0\ar[r]&E^{(i)}\ar[r]\ar[u]&E\ar[r]&T^{(i)}\ar[r]\ar[u]&0\\
0\ar[r]&\tilde{E}\ar[r]\ar[u]&E\ar[r]\ar@{=}[u]&T\ar[r]\ar[u]&0.\\
&0\ar[u]&&T_{(i)}\ar[u]\\
&&&0\ar[u]
}
$$
Since $T^{(i)}$ is pure, then $E^{(i)}$ is locally free and $T_{(i)}$ is pure as well. The $1$-dimensional locus at which $E^{(i)}$ is not globally generated coincides with $\gamma_i$ and $H^0(E^{(i)})\simeq H^0(E)$. Therefore, it is enough to treat the case where $\gamma$ is irreducible and provide an injective morphism $\oo_S(\gamma)\hookrightarrow E$.

Let $s=1$. The restriction of (\ref{gg}) to $\gamma=\gamma_1$ gives an exact sequence:
$$
0\to K\to \tilde{E}\vert_\gamma\to E\vert_\gamma\to T\to 0.$$
We look at $K$ as a subset of the total space of $\tilde{E}$ with $\dim\,K=r_1+1$. By Proposition \ref{general}, given a general subspace $V\in G(\rk\, E-r_1,H^0(\tilde{E}))$, the image of the evaluation map $ev_{V}$ meets $K$ on a reduced $0$-dimensional subscheme $\zeta\subset \gamma$, that we may assume disjoint from the singular locus of $\gamma$. We look at the diagram:
 $$
 \xymatrix{
&0&0\\
0\ar[r]&\tilde{E'}\ar[u]\ar[r]&E'\ar[u]\ar[r]^\alpha&T\ar[r]&0\\
0\ar[r]&\tilde{E}\ar[u]\ar[r]&E\ar[u]\ar[r]&T\ar@{=}[u]\ar[r]&0.\\
&V\otimes\oo_S\ar[u]\ar@{=}[r]&V\otimes\oo_S\ar[u]\\
&0\ar[u]&0\ar[u]\\
}
$$
If $r_1=1$, one can choose $V$ such that $\tilde{E'}=\det\tilde{E}\otimes I_\xi$ and $E'=\det E\otimes I_\xi\otimes I_\zeta$, where $\xi$ is a $0$-dimensional subscheme of $S$ disjoint from $\gamma$. If instead $r_1>1$, then $\tilde{E}'$ is locally free while $E'$ is torsion free and satisfies $\mathrm{Sing}E'=\zeta$. The map $\alpha$ factors through a surjective map $\alpha':E'\vert_\gamma\to T$. Since $E'\vert_\gamma$ has torsion along $\zeta$ and has the same rank as $T$, we obtain $\det \tilde{E}=\det E(-\gamma)$ if $r_1=1$, and $\tilde{E}'=E'(-\gamma)^{\vee\vee}$ if $r_1>1$. The isomorphism $H^0(\tilde{E}')\simeq H^0(E')$ implies that every section of $E'$ vanishes along $\gamma$ and thus  factors through a map $\oo_S(\gamma)\stackrel{\iota}{\hookrightarrow}  E'$. The irreducibility of $\gamma$ yields that either $\gamma$ is a ($-2$)-curve, or $\oo_S(\gamma)$ is nef. In both cases the vanishing $h^1(\oo_S(\gamma))=0$, given by the strong version of Bertini's Theorem, enables us to lift $\iota$ to a morphism $\oo_S(\gamma)\hookrightarrow E$. This concludes the proof.

\end{proof}

If $E$ is a vector bundle as in Proposition \ref{generated}, we say that $E$ is an {\em elementary modification of a g.LM bundle of type (I)}. As a corollary, we obtain the following:
\begin{thm}\label{riso}
Let $E_{C,A}$ be a non-simple LM bundle on $S$ associated with a primitive line bundle $A$ on $C$. Then, $E$ is the extension of a g.LM bundle $E_2$ of type (II) by a vector bundle $E_1$, which is an elementary modification of a g.LM bundle of type (I); in particular, one has $h^1(E_2)=h^2(E_1)=0$.
\end{thm}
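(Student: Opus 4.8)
The plan is to realize the desired extension as the first step of a destabilizing filtration of the coherent system $\Lambda:=(E,V)$, where $E:=E_{C,A}$ and $V:=H^0(E)$. Since $A$ is primitive, $\omega_C\otimes A^\vee$ is base point free, so $E$ is globally generated; moreover $h^1(E)=h^0(A)-r-1=0$ (the series being complete) and $h^2(E)=0$. As $E$ is non-simple, the remark following Proposition \ref{he} shows that $\Lambda$ is non-simple, hence not $p_S$-stable. If $\Lambda$ is unstable I take its Harder--Narasimhan filtration, and if it is strictly semistable I take a Jordan--H\"older filtration; in either case Lemma \ref{pizza} applies. Let $(E_1,V_1)$ be the first term (the maximal destabilizing subsystem, resp. the first graded piece) and set $E_2:=E/E_1$, so that $0\to E_1\to E\to E_2\to 0$.

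First I analyze $E_2$. As a quotient of the globally generated sheaf $E$ it is globally generated, and the long exact sequence in cohomology together with $h^2(E)=0$ (and $H^3=0$ on a surface) gives $h^2(E_2)=0$. To see that $E_2$ is torsion free I check that $E_1$ is saturated in $E$: its saturation $\overline{E_1}$ has the same rank, degree $\geq\deg E_1$ and $V\cap H^0(\overline{E_1})\supseteq V\cap H^0(E_1)=V_1$, so the pair $(\overline{E_1},V\cap H^0(\overline{E_1}))$ has $p_S$-invariants at least those of $(E_1,V_1)$; by the maximality (resp. by the semistability) defining the filtration one must then have $\overline{E_1}=E_1$. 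Hence $E_2$ is a torsion free, globally generated sheaf with $h^2(E_2)=0$, i.e. a g.LM bundle of type (II) in the sense of Definition \ref{inizio}. Finally, feeding $h^1(E)=h^2(E)=0$ into the long exact sequence of the above extension yields an isomorphism $H^1(E_2)\simeq H^2(E_1)$, so the vanishings $h^1(E_2)=0$ and $h^2(E_1)=0$ are equivalent and it suffices to establish one of them.

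It remains to show that $E_1$ is an elementary modification of a g.LM bundle of type (I), for which I verify the hypotheses of Proposition \ref{generated}. Being saturated in the locally free sheaf $E$, the sheaf $E_1$ is reflexive (its bidual injects into $E=E^{\vee\vee}$ with $0$-dimensional cokernel, which must vanish since $E_2$ is torsion free), hence locally free on the smooth surface $S$. By Lemma \ref{pizza}(1) one has $V_1=V\cap H^0(E_1)=H^0(E_1)$, and by Lemma \ref{pizza}(2) the sheaf $E_1$ is generically generated by $H^0(E_1)$. For any subsheaf $F\subseteq E_1$ one has $H^0(F)\subseteq H^0(E_1)\subseteq V$, so $H^0(F)\cap V=H^0(F)$; thus $H^0(F)\simeq H^0(E_1)$ forces $H^0(F)\cap V\simeq V_1$, and Lemma \ref{pizza}(3) gives $h^2(F)=0$. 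This is exactly the hypothesis of Proposition \ref{generated}, so $E_1$ is an elementary modification of a g.LM bundle of type (I). Applying Lemma \ref{pizza}(3) to $F=E_1$ itself yields $h^2(E_1)=0$, whence $h^1(E_2)=0$ by the previous paragraph, and the proof is complete.

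The genuine difficulty has already been absorbed into Proposition \ref{generated} and Lemma \ref{pizza}; granting these, the theorem is essentially an exercise in bookkeeping. The point requiring the most care is the interface between the sheaf-theoretic and the coherent-system pictures: one must ensure that the chosen destabilizing term is saturated, so that $E_2$ is torsion free, and that the condition $H^0(F)\simeq H^0(E_1)$ appearing in Proposition \ref{generated} really does match the condition $H^0(F)\cap V\simeq V_1$ of Lemma \ref{pizza}(3), since it is this matching that simultaneously identifies $E_1$ as the required type and forces $h^2(E_1)=0$.
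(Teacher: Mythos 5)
Your proof is correct and takes essentially the same route as the paper: both pass to the maximal destabilizing (resp.\ Jordan--H\"older) subsystem of the coherent system $(E_{C,A},H^0(E_{C,A}))$, identify the quotient $E_2$ as a g.LM bundle of type (II), and obtain the claim for $E_1$ from Lemma \ref{pizza} combined with Proposition \ref{generated}. Your write-up simply makes explicit the saturation and local-freeness bookkeeping (and the matching of the hypothesis of Proposition \ref{generated} with Lemma \ref{pizza}(3)) that the paper leaves implicit.
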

\begin{proof}
Since $E_{C,A}$ is non-simple, the coherent system $(E_{C,A},H^0(E_{C,A}))$ is not stable. We look at its maximal destabilizing sequence:
\begin{equation}\label{maximal}
0\to (E_1,H^0(E_1))\to (E_{C,A},H^0(E_{C,A}))\to (E_2,V_2)\to 0,
\end{equation}
The sheaf $E_2$ is torsion free and, being a quotient of $E$, it is globally generated by $V_2$ and satisfies $h^2(E_2)=0$, that is, $E_2$ is a g.LM bundle of type (II). Concerning the pair $(E_1,H^0(E_1))$, the statement follows directly from Lemma \ref{pizza} and short exact sequence (\ref{gg}) for $E_1$.
\end{proof}
\begin{rem}\label{minimo}
In the next section it will be convenient to replace the maximal destabilizing sequence (\ref{maximal}) with a {\em minimal destabilizing sequence}, where we require $(E_2,V_2)$ to be a stable quotient of $(E_{C,A},H^0(E_{C,A}))$ with $p_{(E_2,V_2)}$ minimal. In other words, the coherent system $(E_2,V_2)$ is any stable quotient of the minimal destabilizing quotient $(E_{C.A}/HN_{s-1}(E),H^0(E_{C,A})/H^0(HN_{s-1}))$. By construction, minimal destabilizing sequences are not unique. By Lemma \ref{pizza} and  short exact sequence (\ref{gg}) for $E_1$, the subbundle $E_1$ appearing in such a minimal destabilizing sequence  is an elementary modification of a g.LM bundle of type (I) and hence satisfies $h^2(E_1)=0$.
\end{rem}

\section{Linear series computing the Clifford index}
In this section, we describe complete linear series $A$ of type $g^r_d$ on a curve $C\subset S$ such that $\Cliff(C)=\Cliff(A)$ and $\rho(g,r,d)<0$. The following lemma explains how to recognize when a linear series on $C$ is cut out by the restriction of a line bundle on $S$ by only looking at the corresponding LM bundle.
\begin{lem}\label{estate}
Let $N_1\in\Pic(S)$ satisfy $h^0(N_1)\geq 2$ and $h^1(N_1)=0$. Also assume that the line bundle $N_2:=L\otimes N_1^\vee$ is globally generated and $h^1(N_2)=0$. Let $E$ be a g.LM bundle on $S$. Then, $E=E_{C,N_2\otimes\oo_C}$ for some smooth irreducible curve $C\in \vert L\vert$ if and only if $E$ is an extension of the form:
\begin{equation}\label{mentos}
0\to N_1\to E\to E_{C_2,\omega_{C_2}}\to 0,
\end{equation}
for some smooth irreducible curve $C_2\in\vert N_2\vert$.
\end{lem}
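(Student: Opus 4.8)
The plan is to reduce both implications to a single statement about the syzygy bundle $M_{N_2}:=\ker\bigl(H^0(N_2)\otimes\oo_S\to N_2\bigr)$, which is locally free since $N_2$ is globally generated. The linchpin is a standalone identification $E_{C_2,\omega_{C_2}}^\vee\cong M_{N_2}$, valid for every smooth irreducible $C_2\in|N_2|$ (such a $C_2$ exists by Theorem \ref{bertini}, using $c_1(N_2)^2>0$ and the fact that global generation rules out fixed components). To prove it, note that $\omega_{C_2}=N_2\otimes\oo_{C_2}$ because $\omega_S=\oo_S$, and that the sequence $0\to\oo_S\xrightarrow{s_{C_2}}N_2\to\omega_{C_2}\to0$ together with $h^1(\oo_S)=0$ gives a surjection $H^0(N_2)\twoheadrightarrow H^0(\omega_{C_2})$ with one-dimensional kernel $\langle s_{C_2}\rangle$. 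Comparing the evaluation sequences for $N_2$ and for $\omega_{C_2}$ through these restriction maps and applying the snake lemma — the induced map $\langle s_{C_2}\rangle\otimes\oo_S\to\ker(N_2\to\omega_{C_2})$ being the isomorphism $\oo_S\xrightarrow{\sim}\oo_S$ — yields $M_{N_2}\cong\ker(H^0(\omega_{C_2})\otimes\oo_S\to\omega_{C_2})=E_{C_2,\omega_{C_2}}^\vee$, hence $E_{C_2,\omega_{C_2}}\cong M_{N_2}^\vee$ after dualizing.

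For the forward implication I would set $A:=N_2\otimes\oo_C$ and first observe that restriction $H^0(N_2)\xrightarrow{\sim}H^0(A)$ is an isomorphism: this follows from $0\to N_1^\vee\to N_2\to A\to0$ together with $h^0(N_1^\vee)=0$ and $h^1(N_1^\vee)=h^1(N_1)=0$ (Serre duality and the hypotheses). Consequently the evaluation map of $A$ factors as $H^0(N_2)\otimes\oo_S\xrightarrow{ev_{N_2}}N_2\to A$, and comparing kernels — that of $N_2\to A$ being $N_1^\vee$ — produces a short exact sequence $(\dagger)\colon 0\to M_{N_2}\to E^\vee\to N_1^\vee\to0$. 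Dualizing $(\dagger)$, using that $N_1^\vee$ is locally free so $\ext^1(N_1^\vee,\oo_S)=0$, and inserting $M_{N_2}^\vee\cong E_{C_2,\omega_{C_2}}$, gives precisely the asserted extension $0\to N_1\to E\to E_{C_2,\omega_{C_2}}\to0$.

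For the converse I would run this backwards. The asserted extension realizes $E$ as an extension of locally free sheaves, so $E$ is locally free and $E^{\vee\vee}=E$; dualizing it (again $\ext^1(E_{C_2,\omega_{C_2}},\oo_S)=0$) and substituting $E_{C_2,\omega_{C_2}}^\vee\cong M_{N_2}$ recovers a sequence of the form $(\dagger)$, with class $e\in\Ext^1(N_1^\vee,M_{N_2})$. The key point is to realize $e$ as the pullback $\phi_s^{*}(e_0)$ of the evaluation extension $e_0\in\Ext^1(N_2,M_{N_2})$ along a multiplication map $\phi_s\colon N_1^\vee\to N_2$ determined by a section $s\in\Hom(N_1^\vee,N_2)=H^0(L)$. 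This is possible because the connecting homomorphism $\delta\colon H^0(L)\to\Ext^1(N_1^\vee,M_{N_2})$ of the functor $\Hom(N_1^\vee,-)$ applied to the evaluation sequence of $N_2$ is surjective: the next term $\Ext^1(N_1^\vee,H^0(N_2)\otimes\oo_S)=H^0(N_2)\otimes H^1(N_1)$ vanishes by the hypothesis $h^1(N_1)=0$. Picking $s$ with $\delta(s)=e$ gives a morphism from $(\dagger)$ to the evaluation sequence which is the identity on $M_{N_2}$ and equals $\phi_s$ on the quotients; the snake lemma then shows the induced map $E^\vee\to H^0(N_2)\otimes\oo_S$ is injective with cokernel $N_2\otimes\oo_{Z(s)}$. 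Setting $C:=Z(s)$ and using once more $H^0(N_2)\cong H^0(N_2\otimes\oo_C)$, this says exactly that $E^\vee=\ker\bigl(H^0(A)\otimes\oo_S\to A\bigr)$ with $A=N_2\otimes\oo_C$, i.e. $E\cong E_{C,N_2\otimes\oo_C}$.

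The main obstacle is to ensure that $C=Z(s)$ can be chosen smooth and irreducible. The admissible sections form only the coset $\delta^{-1}(e)=s_0+\im\mu$, where $\mu\colon H^0(N_1)\otimes H^0(N_2)\to H^0(L)$ is the multiplication map, so one cannot simply invoke Bertini on all of $|L|$. Instead I would analyze the linear subsystem $\langle s_0\rangle+\im\mu$: since $N_2$ is globally generated, every base point of it must be a base point of $|N_1|$, which by Theorem \ref{bertini} lies in the fixed part of $|N_1|$ together with finitely many points. A general member of this subsystem is then smooth and irreducible away from that locus, and a direct check (using $h^1(N_1)=0$ and $c_1(N_2)^2>0$ to exclude the degenerate cases of Theorem \ref{bertini}(ii)) produces the required smooth irreducible $C\in|L|$ with $E\cong E_{C,N_2\otimes\oo_C}$, completing the equivalence.
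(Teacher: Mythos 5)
Your syzygy-bundle lemma $E_{C_2,\omega_{C_2}}^\vee\cong M_{N_2}$ and your proof of the forward implication are correct, and in fact cleaner than the paper's: the paper identifies the quotient $K=M_{N_2}^\vee$ with $E_{C_2,\omega_{C_2}}$ by computing $\Cliff(K)=0$ and invoking the classification in Corollary \ref{index}, whereas your snake-lemma comparison of the two evaluation sequences gives the isomorphism directly for every smooth $C_2\in|N_2|$. The coset description $\delta^{-1}(e)=s_0+\im\mu$ in the converse is also correct, since $\Ext^1(N_1^\vee,H^0(N_2)\otimes\oo_S)=H^0(N_2)\otimes H^1(N_1)=0$.

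The converse, however, has a genuine gap, located exactly at the step you call "the main obstacle": the claim that the coset $s_0+\im\mu$ contains a section with smooth \emph{irreducible} zero locus. Notice that your argument never uses the hypothesis that $E$ is a g.LM bundle --- only the extension (\ref{mentos}) --- and without that hypothesis the converse is false, so no "direct check" can close the gap. Concretely: take a $K3$ surface with $\Pic(S)=\mathbb{Z}\Sigma\oplus\mathbb{Z}\Gamma$, where $\Sigma$ is an irreducible elliptic curve, $\Gamma$ a ($-2$)-curve and $\Sigma\cdot\Gamma=1$, and set $N_1:=\oo_S(\Sigma+\Gamma)$; then $h^0(N_1)=2$ and $h^1(N_1)=0$, but $\Gamma$ is a fixed component of $|N_1|$. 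Choose $N_2$ very ample, $L:=N_1\otimes N_2$, and let $E:=N_1\oplus E_{C_2,\omega_{C_2}}$ be the split extension, so $e=0$ and $\delta^{-1}(e)=\im\mu$. Every element of $\im\mu$ is a sum $\sum_i\sigma_i\tau_i$ with $\sigma_i\in H^0(N_1)$, hence vanishes along $\Gamma$: every member of your subsystem contains $\Gamma$ and is reducible, so no admissible $s$ exists. And indeed this $E$ is not of the form $E_{C,N_2\otimes\oo_C}$ for any smooth irreducible $C$: such an LM bundle is globally generated off the (finite) base locus of $\omega_C\otimes(N_2\otimes\oo_C)^\vee=N_1\otimes\oo_C$, while $E$ fails to be globally generated along the entire curve $\Gamma$ --- which is also why $E$ is not a g.LM bundle, so the Lemma itself survives. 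The paper's converse sidesteps all of this by using the g.LM hypothesis first: together with the extension it gives that $E$ is locally free and globally generated with $h^1(E)=0$, hence $E=E_{C,A}$ for \emph{some} smooth irreducible $C\in|L|$ and complete $A\in\Pic(C)$; then the composite $N_1\hookrightarrow E\to\omega_C\otimes A^\vee$ is nonzero because $\Hom(N_1,\oo_S)=0$, which by adjunction gives $h^0(A^\vee\otimes N_2\otimes\oo_C)>0$, and since $\deg(N_2\otimes\oo_C)=c_2(E)=\deg A$ this forces $A\simeq N_2\otimes\oo_C$. To repair your route you would have to invoke the g.LM hypothesis \emph{before} any Bertini-type statement about the constrained subsystem, e.g. to prove $E$ globally generated and to control the base locus of $\langle s_0\rangle+\im\mu$; as written, the last paragraph of your proof asserts precisely what fails in the example above.
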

\begin{proof}
First, assume $E=E_{C,N_2\otimes\oo_C}$ for a smooth irreducible curve $C\in \vert L\vert$. Since $N_2$ is globally generated and $h^1(N_1)=0$, we obtain a diagram:
$$
\xymatrix{
&0\\
&N_1^\vee\ar[u]&&0\\
0\ar[r]&E_{C,N_2\otimes\oo_C}^\vee\ar[r]\ar[u]& H^0(N_2\otimes\oo_C)\otimes\oo_S\ar[r]&N_2\otimes\oo_C\ar[r]\ar[u]&0\\
0\ar[r]&K^\vee\ar[r]\ar[u]& H^0(N_2)\otimes\oo_S\ar[r]\ar[u]^{\simeq}&N_2\ar[r]\ar[u]&0.\\
&0\ar[u]&&N_1^\vee\ar[u]\\
&&&0\ar[u]\\
}
$$
\noindent One can easily check that $K$ is a globally generated vector bundle satisfying $h^i(K)=0$ for $i=1,2$, that is, $K$ is a LM bundle. Furthermore, one has $c_1(K)=c_1(N_2)$ and $c_2(K)=c_1(N_2)^2=2(\rk K-1)$. Corollary \ref{index} thus yields $K\simeq E_{C_2,\omega_{C_2}}$ for a smooth irreducible curve $C_2\in\vert N_2\vert$ and the "only if" part of the statement follows.

As for the converse implication, let $E$ sit in a short exact sequence as in (\ref{mentos}). Then, $E$ is locally free and globally generated, that is, $E=E_{C,A}$ for a smooth curve $C\in\vert L\vert$ and a line bundle $A\in \Pic^d(C)$ with $d=c_2(E)$. Look at the diagram:
$$
\xymatrix{
0\ar[r]&H^0(C,A)^\vee\otimes\oo_S\ar[r]&E\ar [r]^-\alpha&\omega_C\otimes A^\vee\ar[r]&0.\\
&&N_1\ar@{^{(}->}[u]_{\iota}&&
}
$$
Since $h^0(N_1)\geq 2$, then $\Hom(N_1,\oo_S)=0$ and $0\neq\alpha\circ \iota\in\Hom(N_1,\omega_C\otimes A^\vee)$. By adjunction, we get $h^0(A^\vee\otimes N_2\otimes \oo_C)>0$. It is easy to verify that $\deg (N_2\otimes\oo_C)=d$, hence $A$ is isomorphic to $N_2\otimes\oo_C$.
\end{proof}
\begin{rem}\label{ice}
The above proof also shows that, as soon as there exists $N\in\Pic(S)$ such that $h^0(N)\geq 2$ together with an injective morphism $N\hookrightarrow E_{C,A}$, one obtains that $h^0(A^\vee\otimes (L\otimes N^\vee)\otimes\oo_C)\neq 0$, i.e., the linear series $\vert A\vert$ is contained in $\vert (L\otimes N^\vee)\otimes\oo_C\vert$; this coincides with the restriction of $\vert L\otimes N^\vee\vert$ to $C$ if $h^1(N)=0$.
\end{rem}
\begin{ex}\label{pisano}
Assume the existence of an irreducible curve $B\subset S$ of genus $2$ such that $c_1(L)=2B$. For any smooth irreducible curve $C\in\vert L\vert$, the complete linear series $\oo_C(B)$ is of type $g^2_4$ and Lemma \ref{estate} exhibits the LM bundle $E_{C,\oo_C(B)}$ as an extension of the form:
$$
0\to\oo_S(B)\to E_{C,\oo_C(B)}\to E_{B,\omega_B}\to 0.
$$
Since $C$ has a $g^2_4$, it is then hyperelliptic and it turns out that $E_{C,g^1_2}=\oo_S(B)^{\oplus 2}$. This can be proved as follows. The unique $g^1_2$ on $C$ coincides with the line bundle \mbox{$\oo_C(B)(-x-y)$} for any two points $x,y\in C$ which are conjugate under the hyperelliptic involution. Hence, there exists a section $\alpha\in H^0(\oo_C(B))^\vee\subset H^0(E_{C,\oo_C(B)})$ such that $H^0(\oo_C(B))^\vee/\langle \alpha\rangle\simeq H^0(\oo_C(B)(-x-y))^\vee$, and the LM bundles $E_{C,\oo_C(B)}$ and $E_{C,g^1_2}$ fit in the exact sequence:
$$
0\to\oo_S\stackrel{\alpha}{\longrightarrow}E_{C,\oo_C(B)}\to E_{C,g^1_2}\to\oo_{x+y}\to0.
$$
We obtain an injection $\oo_S(B)\hookrightarrow E_{C,g^1_2}$, and its saturation gives a short exact sequence:
$$
0\to\oo_S(B+D)\to E_{C,g^1_2}\to\oo_S(B-D)\otimes I_\xi\to 0,
$$
where $l(\xi)=D^2$ and $D\geq 0$.  Since $2=c_2(E_{C,g^1_2})\geq B^2-D^2$, we have $D^2\geq 0$ and hence $D\cdot B>0$ by the Hodge Index Theorem. We show that $D=0$. Assume $D>0$; by Remark \ref{ice}, the $g^1_2$ on $C$ is contained in $\vert \oo_C(B-D)\vert$. Then, $\oo_C(B-D)$ is either a $g^1_2$ or a $g^1_3$ on $C$. The former case cannot occur because, if $D\sim D'+\Delta$ with $\Delta$ being the fixed part of $D$, then we would have $B\cdot D'=B\cdot D=1$; this is absurd as $B+D'$ is numerically $2$-connected. The latter case can also be excluded because $C\cdot (B-D)=2B\cdot (B-D)$ is even. Therefore, $D=0$ and $E_{C,g^1_2}=\oo_S(B)^{\oplus 2}$ as $h^1(\oo_S)=0$.

\end{ex}
\begin{ex}\label{mango}
Assume the existence of an irreducible elliptic curve $\Sigma\subset S$ such that $\Sigma\cdot c_1(L)=2$. Then, the restriction of $\Sigma$ induces a linear series of type $g^1_2$ on any curve $C\in \vert L\vert$. By the results in \cite{reid}, any hyperelliptic $K3$ surface is a ramified double cover of either $\mathbb{P}^2$ or the Hirzebruch surface $\mathbb{F}_n$ for $n\leq 4$. Therefore, one can write
$$
c_1(L)=D+(n+s)\Sigma,
$$
where $n,s$ are integers such that $s\geq 0$ and $0\leq n\leq 4$, and $D$ is an effective divisor satisfying $D^2=-2n$, and $D\cdot \Sigma=2$ and $H^0(\oo_S(D-\Sigma))=0$. Furthermore, every component $\Theta$ of $D$ satisfies $\Theta\cdot (D+n\Sigma)\geq 0$; in particular, for $n=0$, the divisor $D$ is an elliptic curve moving in a base point free pencil. An easy computation gives $g=n+2s+1$. Fix $r<g-1$; then, the line bundle $\oo_C(r\Sigma)$ gives the linear series $rg^{1}_{2}$ on $C$. By proceeding as in the first part of the proof of Lemma \ref{estate}, one can show that the LM bundle $E=E_{C,rg^1_2}$ sits in a short exact sequence of the form:
\begin{equation}\label{bene}
0\to \oo_S(D+(n+s-r)\Sigma)\to E\to \bigoplus^r\oo_S(\Sigma)\to 0.
\end{equation}
Note that the line bundle $ \oo_S(D+(n+s-r)\Sigma)$ is effective if and only if $r\leq n+s$.
\end{ex}
We can now prove the following strengthening of Theorem \ref{mannaggia}:
\begin{thm}\label{duro}
Let $A$ be a complete $g^r_d$ on a non-hyperelliptic and non-trigonal curve $C\subset S$ such that $d\leq g-1$, $\rho(g,r,d)<0$ and $\Cliff(A)=\Cliff(C)$. Assume $L:=\oo_S(C)$ is ample and the following condition is satisfied:
\begin{itemize}
\item[(*)] there is no irreducible elliptic curve $\Sigma\subset S$ such that $\Sigma\cdot C=4$ and no irreducible genus $2$ curve $B\subset S$ such that $B\cdot C=6$.
\end{itemize}
Then, one of the following occurs:
\begin{itemize}
\item[(i)] There exists a line bundle $M\in\Pic(S)$ adapted to $\vert L\vert$ such that $A\simeq M\otimes\oo_C$.
\item[(ii)] The line bundle $A$ satisfies $h^0(C,A)=2$ (i.e., $r=1$); furthermore, there exists a line bundle $M\in\Pic(S)$ adapted to $\vert L\vert$ such that $\vert A\vert$ is contained in the restriction of $\vert M\vert$ to $C$.
\end{itemize}

If condition (*) is not satisfied, then the following cases may also occur:
\begin{itemize}
\item[(iii)] There exists an irreducible curve $B$ of genus $2$ such that $C\sim 3B$ and $A$ is either a complete $g^2_6$ or a complete $g^3_8$; in both cases $\vert A\vert$ is contained in $\vert \oo_C(2B)\vert$.
\item[(iv)] There exist an irreducible curve $B$ of genus $2$ and an irreducible elliptic curve $\Sigma$ such that $B\cdot \Sigma=2$ and $C\sim 2B+\Sigma$; furthermore, $A$ is of type either $g^2_6$ or $g^3_8$ and $\vert A\vert$ is contained in $\vert \oo_C(B+\Sigma)\vert$.
\item[(v)] One has $C\sim \Sigma+\Sigma'+\Sigma''$ for three irreducible elliptic curves $\Sigma$, $\Sigma'$, $\Sigma''$ pairwise intersecting in two points, and $A$ is of type $g^2_6$; moreover,  the linear system $\vert A\vert$ is contained in $\vert \oo_C(\Sigma+\Sigma'')\vert$.
\item[(vi)] There exist two irreducible elliptic curves $\Sigma$, $\Sigma'$ and a divisor $D$ on $S$ satisfying \mbox{$D^2=-4$}, and $D\cdot \Sigma'=0$, and $D\cdot \Sigma=\Sigma\cdot\Sigma'=2$ such that $C\sim D+2\Sigma+\Sigma'$; furthermore, $A$ is of type $g^2_6$ and $\vert A\vert$ is contained in $\vert \oo_C(\Sigma+\Sigma')\vert$.
\item[(vii)] There are two elliptic curves $\Sigma$, $\Sigma'$ and a divisor $D$ such that $C\sim D+2\Sigma+\Sigma'$ with $D^2=-2$, and $D\cdot\Sigma'=0$, and $D\cdot \Sigma=\Sigma\cdot\Sigma'=2$; furthermore, $A$ is of type $g^2_6$ and $\vert A\vert$ is contained in $\vert\oo_C(\Sigma+\Sigma')\vert$.
\item[(viii)] There is an irreducible elliptic curve $\Sigma$ and a divisor $D$ such that $C\sim2D+3\Sigma$, and $D^2=-2$ and $D\cdot\Sigma=2$; moreover, $A$ is a complete $g^3_8$ and $\vert A\vert$ is contained in $\vert \oo_C(D+2\Sigma)\vert$.
\item[(ix)] There are two irreducible elliptic curves $\Sigma$, $\Sigma'$ satisfying $\Sigma\cdot \Sigma'=2$ and $C\sim 2\Sigma+2\Sigma'$; moreover, either $A$ is of type $g^2_6$ and $\vert A\vert$ is contained in $\vert\oo_C(\Sigma+\Sigma')\vert$, or $A$ is of type $g^3_8$ and $\vert A\vert$ is contained in $\vert\oo_C(2\Sigma+\Sigma')\vert$.
\end{itemize}
\end{thm}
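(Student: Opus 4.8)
The plan is to feed $E_{C,A}$ into the minimal destabilizing machinery of Remark \ref{minimo} and read off the adapted line bundle from the resulting pieces, using the additivity of the Clifford index together with the extremality statement of Corollary \ref{index} to eliminate all but finitely many numerical configurations. First I would record that $A$ is primitive: since $C$ is neither hyperelliptic nor trigonal and $A$ computes $\Cliff(C)$ with $d\leq g-1$, both $A$ and $\omega_C\otimes A^\vee$ are base point free, so $E_{C,A}$ is a genuine globally generated, locally free LM bundle. As $\rho(g,r,d)<0$ the bundle is non-simple, so I pass to a minimal destabilizing sequence
$$
0\to E_1\to E_{C,A}\to E_2\to 0,
$$
with $E_2$ a stable g.LM bundle of type (II) and $E_1$ an elementary modification of a g.LM bundle of type (I). Writing $r_i=\rk E_i$ and $L_i=c_1(E_i)$, we have $L_1+L_2=c_1(L)$ and $r_1+r_2=r+1$.

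A Chern class computation from the sequence yields the additivity formula
$$
\Cliff(A)=\Cliff(E_1)+\Cliff(E_2)+L_1\cdot L_2-2 .
$$
I would then invoke Corollary \ref{index} (and Remark \ref{postino} together with Proposition \ref{elliptic} in the degenerate case $L_i^2=0$) to get $\Cliff(E_i)\geq 0$, and the ampleness of $L$, which forces $2$-connectedness of every divisor in $\vert L\vert$ via the Saint-Donat--Reid theorem, to get $L_1\cdot L_2\geq 2$ whenever both $L_i$ are effective and nonzero. Since $A$ computes $\Cliff(C)$, which is minimal among line bundles adapted to $\vert L\vert$, these inequalities must be essentially equalities; this pins $\Cliff(E_1),\Cliff(E_2)$ to their extremal values and forces $L_1\cdot L_2$ to be close to $\Cliff(C)+2$.

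The proof then splits according to $r_1$. If one of the pieces is a line bundle $N$ with $h^0(N)\geq 2$ injecting into $E_{C,A}$, Remark \ref{ice} gives $\vert A\vert\subseteq\vert (L\otimes N^\vee)\otimes\oo_C\vert$; when moreover $\Cliff(E_1)=\Cliff(E_2)=0$, Lemma \ref{estate} upgrades this to an isomorphism $E_{C,A}\simeq E_{C,N_2\otimes\oo_C}$ with $N_2=L\otimes N^\vee$ adapted, which is conclusion (i). A dimension count on the containment of linear series shows that the inclusion is an equality precisely when $r>1$, whereas the residual $r=1$ case only yields the containment, giving conclusion (ii). If instead both pieces have rank $\geq 2$, the extremal bundles permitted by Corollary \ref{index}(b),(c) are the canonical and hyperelliptic g.LM bundles $E_{C_i,\omega_{C_i}}$ and $E_{C_i,(r_i-1)g^1_2}$, which constrain the numerical class of $c_1(L)$ in terms of subcurves $C_i$ that are elliptic, of genus $2$, or hyperelliptic.

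The hard part, and the bulk of the work, is this last case: translating the extremal g.LM pieces into the precise numerical configurations on $S$. Using the Hodge index theorem, $2$-connectedness, and Saint-Donat's classification of hyperelliptic and elliptic linear systems together with the explicit descriptions in Examples \ref{pisano} and \ref{mango}, I would enumerate the admissible decompositions $c_1(L)=L_1+L_2$. The Clifford-minimality constraint lets survive exactly those decompositions producing an irreducible elliptic $\Sigma$ with $\Sigma\cdot C=4$ or an irreducible genus $2$ curve $B$ with $B\cdot C=6$; under hypothesis (*) these are excluded and only (i) or (ii) remain, while without (*) they reproduce precisely the list (iii)--(ix). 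Matching each surviving numerical type to the asserted description of $A$ and to the containment $\vert A\vert\subseteq\vert M\otimes\oo_C\vert$ is the most delicate bookkeeping, and requires handling carefully the elementary modifications by $(-2)$-curves allowed by Proposition \ref{generated} and the degenerate pieces with $L_i^2=0$ treated via Proposition \ref{elliptic}.
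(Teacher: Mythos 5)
Your overall strategy --- minimal destabilizing sequence of the coherent system $(E_{C,A},H^0(E_{C,A}))$, additivity of the Clifford index, extremality via Corollary \ref{index}, then a numerical case analysis --- is the same as the paper's, but the step on which everything rests has a genuine gap. Your additivity formula is a correct Chern-class identity; what is unjustified is the bound $\Cliff(E_1)\geq 0$, because Corollary \ref{index} applies to g.LM bundles and $E_1$ is not one: by Remark \ref{minimo} it is only an \emph{elementary modification} of a g.LM bundle of type (I), i.e.\ it sits in an extension $0\to\tilde{E_1}\to E_1\to T\to 0$ with $\tilde{E_1}$ a g.LM bundle and $T$ pure of dimension $1$, and one computes
$$
\Cliff(E_1)=\Cliff(\tilde{E_1})+c_1(\tilde{E_1})\cdot c_1(T)+\tfrac{1}{2}c_1(T)^2-\chi(T),
$$
whose correction term can a priori be negative (for instance when $T$ is supported on $(-2)$-curves, which are allowed here since only $L$ is assumed ample). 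The bulk of the paper's proof is devoted exactly to this point: it keeps the correction terms explicit (formula (\ref{primo}) and inequality (\ref{campane})) and proves in Step IV that $T=0$, using the minimality $\Cliff(E)=\Cliff(C)$ together with the ampleness of $C$, which makes (\ref{campane}) strict whenever $T\neq 0$. Your closing suggestion to handle the elementary modification through Proposition \ref{generated} does not substitute for this: on a surface containing $(-2)$-curves that proposition may land in the branch of its dichotomy that produces no injection of a nef line bundle, and then no usable additivity survives.

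Second, your treatment of the degenerate case $c_1(E_i)^2=0$ is wrong as stated: by Remark \ref{postino} one has $\Cliff(E_i)=-2(\rk E_i-1)$, which is \emph{negative} once $\rk E_i\geq 2$, not nonnegative. What must actually be proved --- and this is the content of Steps I--III of the paper --- is that a degenerate piece has rank one, i.e.\ $\tilde{E_1}=\oo_S(\Sigma_1)$, resp.\ $E_2=\oo_S(\Sigma_2)$; this uses the non-hyperelliptic and non-trigonal hypotheses, the numerical $2$-connectedness of divisors in $\vert L\vert$, and the stability of $(E_2,V_2)$, none of which appears in your sketch. Finally, your case split is not exhaustive: the branches ``$E_1$ a line bundle'' and ``both pieces of rank $\geq 2$'' omit precisely the region $r_1\geq 2$, $r_2=1$, which is where most of the exceptional cases live (in (iv)--(vii) and (ix) the quotient $E_2$ is a line bundle while the sub $E_1$ is a hyperelliptic LM bundle $E_{C_1,(r_1-1)g^1_2}$). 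Moreover, Lemma \ref{estate} yields conclusion (i) only once you know $E_2=E_{C_2,\omega_{C_2}}$, which requires ruling out alternative (c) of Corollary \ref{index} via the stability of $(E_2,V_2)$ (this is where the paper uses $\rho(g_2,r_2-1,2r_2-2)\geq 0$), another argument missing from the proposal.
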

Before starting the proof of Theorem \ref{duro}, notice that in all the cases (iii)--(ix) one has $\Cliff(C)=2$ and $g\leq 10$. In fact, condition (*) is automatically satisfied as soon as $\Cliff(C)> 2$.
\begin{proof}
Since $A$ computes $\Cliff(C)$, then $A$ is primitive and the LM bundle $E:=E_{C,A}$ is globally generated and non-simple (because $\rho(g,r,d)<0$). The condition $d\leq g-1$ forces $h^0(E)/\rk\,E=h^0(E)/(r+1)\geq 2$. We consider a minimal destabilizing sequence of the coherent system $(E,H^0(E))$:
\begin{equation}\label{minimal}
0\to (E_1,H^0(E_1))\to (E,H^0(E))\to (E_2,V_2)\to 0.
\end{equation}
In particular, $E_2$ is a g.LM bundle of type (II) and $(E_2,V_2)$ is stable (cf. Remark \ref{minimo}). There exists a commutative diagram:
$$
\xymatrix{
&0\\
&T\ar[u]&&0\\
0\ar[r]&\ar[r]E_1\ar[r]\ar[u]&E\ar[r]&E_2\ar[r]\ar[u]&0\\
0\ar[r]&\ar[r]\tilde{E_1}\ar[r]\ar[u]&E\ar[r]\ar@{=}[u]&\tilde{E_2}\ar[r]\ar[u]&0,\\
&0\ar[u]&&T\ar[u]\\
&&&0\ar[u]
}
$$
where $\tilde{E_1}$ is a g.LM bundle of type (I) satisfying $h^0(\tilde{E_1})=h^0(E_1)$, and $T$ is a pure sheaf of dimension $1$. Furthermore, Lemma \ref{pizza} gives $h^2(\tilde{E_1})=h^2(E_1)=0$ and hence $h^1(\tilde{E_2})=h^1(E_2)=0$. Being a quotient of $E$, the sheaf $\tilde{E_2}$ is globally generated and its torsion coincide with $T$. We set $r_i:=\rk E_i$ for $i=1,2$.

Note that both $\det\tilde{E_1}$ and $\det E_2$ are globally generated (cf. \cite[Lemma 3.3]{ciotola}), and $h^0(\det\tilde{E_2})\geq h^0(\det E_2)\geq 2$ and $h^0(\det E_1)\geq h^0(\det\tilde{E_1})\geq2$; in particular, both $\det(\tilde{E_1})\otimes\oo_C$ and $\det(E_2)\otimes\oo_C$ contribute to the Clifford index. By the Hirzebruch-Riemann-Roch Theorem applied to $T$, easy computations show that:
\begin{align}\label{primo}
\Cliff(E)=\Cliff(\tilde{E_1})+&\Cliff(E_2)+\nonumber\\
&+\left(c_1(E_2)\cdot c_1(T)+\frac{1}{2}c_1(T)^2-\chi(T)\right)+c_1(\tilde{E_1})\cdot c_1(\tilde{E_2})-2.
\end{align}
Since $h^1(\tilde{E_1})\geq \chi(T)$ and $C\cdot c_1(T)=(c_1(E_2)+c_1(\tilde{E_1})+c_1(T))\cdot c_1(T)\geq0$, one has:
\begin{align}\label{campane}
\Cliff(E)\geq\Cliff(\tilde{E_1})+&\Cliff(E_2)+\nonumber\\
&+\frac{1}{2}c_1(T)\cdot\left(c_1(E_2)-c_1(\tilde{E_1})\right)-h^1(\tilde{E_1})+c_1(\tilde{E_1})\cdot c_1(\tilde{E_2})-2,
\end{align}
and the inequality is strict as soon as $T\neq 0$ because $C$ is ample.
Now, we proceed by steps.\\\vspace{0.1cm}

\noindent{\em STEP I: If $c_1(\tilde{E_1})^2=c_1(E_2)^2=0$, then either $c_1(\tilde{E_1})=\oo_S(\Sigma_1)$, or $c_1(\tilde{E_2})=\oo_S(\Sigma_2)$ for some irreducible elliptic curve $\Sigma_i$.}

Let $c_1(\tilde{E_1})^2=c_1(E_2)^2=0$. Proposition \ref{elliptic} implies the existence of two irreducible elliptic curves $\Sigma_1$, $\Sigma_2$ such that $c_1(\tilde{E_1})=(r_1+h^1(E_1))\Sigma_1$ and $c_1(E_2)=r_2\Sigma_2$ (as $h^1(E_2)=h^2(E_1)=0$). Note that both $\oo_C(\Sigma_1)$ and $\oo_C(\Sigma_2)$ contribute to the Clifford index and:
\begin{eqnarray*}
\Cliff(\oo_C(\Sigma_1))&\leq&r_2\Sigma_1\cdot\Sigma_2+\Sigma_1\cdot c_1(T)-2,\\
\Cliff(\oo_C(\Sigma_2))&\leq&(r_1+h^1(E_1))\Sigma_1\cdot\Sigma_2+\Sigma_2\cdot c_1(T)-2.
\end{eqnarray*}
Using the fact that $c_1(\tilde{E_2})=c_1(E_2)+c_1(T)$, inequality (\ref{campane}) becomes:
\begin{align*}
\Cliff(E)\geq -2r_1-2r_2+2-h^1&(\tilde{E_1})+\frac{r_2}{2}\,\Sigma_2\cdot c_1(T)+\\&+\frac{r_1+h^1(\tilde{E_1})}{2}\,\Sigma_1\cdot c_1(T)+r_2(r_1+h^1(\tilde{E_1}))\Sigma_1\cdot \Sigma_2.
\end{align*}

First assume $\Sigma_1\cdot c_1(T)\geq\Sigma_2\cdot c_1(T)$; we will show that this implies $r_2=1$ and hence $h^1(\det E_2)=0$. Elementary manipulations give:
\begin{align}\label{bleau}
\Cliff(E)-\Cliff(\oo_C(\Sigma_2))\geq -2(r_1+r_2-2)&+\frac{r_1+r_2-2+h^1(\tilde{E_1})}{2}\,\Sigma_2\cdot c_1(T)+\nonumber\\
&+(r_2-1)(r_1+h^1(\tilde{E_1}))\Sigma_1\cdot \Sigma_2-h^1(\tilde{E_1}).
\end{align}
If $\Sigma_1\sim\Sigma_2$, then $T\neq 0$ (as $C$ is irreducible and hence it is not linearly equivalent to a multiple of $\Sigma_2$) and inequality (\ref{bleau}) is strict. In this case one easily obtains:
$$
\Cliff(E)-\Cliff(\oo_C(\Sigma_2))>-2(r-1)+\frac{r-1}{2}\,\Sigma_2\cdot c_1(T)+h^1(\tilde{E_1})\left(\frac{\Sigma_2\cdot c_1(T)-2}{2}\right)\geq0,
$$
where in the last inequality we have used that $\Sigma_2\cdot c_1(T)=\Sigma_2\cdot C\geq 4$ because $C$ is neither hyperelliptic nor trigonal. We have thus reached the contradiction $\Cliff(C)=\Cliff(E)>\Cliff(\oo_C(\Sigma_2))$.\\
If instead $\Sigma_1\not\sim\Sigma_2$, then $\Sigma_1+\Sigma_2$ is numerically $2$-connected and one obtains:
\begin{align}
\Cliff(E)-\Cliff(\oo_C(\Sigma_2))\geq2(r_1-1)(r_2-2)+(2r_2-3)h^1(\tilde{E_1});
\end{align}
this brings to the same contradiction as above as soon as $r_2\geq 2$. Indeed, if we had \mbox{$r_2=2$} and $\Cliff(E)=\Cliff(\oo_C(\Sigma_2))$, this would imply $T=0$ and $h^1(E_1)=0$; in particular, we would find $(E_2,V_2)=(\oo_S(\Sigma_2)^{\oplus 2}, H^0(\oo_S(\Sigma_2)^{\oplus 2}))$ and this is not possible since $(E_2,V_2)$ is stable by assumption.

Analogously, if $\Sigma_2\cdot c_1(T)>\Sigma_1\cdot c_1(T)$, one may show that
$$
\Cliff(E)-\Cliff(\oo_C(\Sigma_1))> 2(r_1-2)(r_2-1)+(2r_2-1)h^1(\tilde{E_1}),
$$
and hence conclude that $r_1=1$ and $h^1(\tilde{E_1})=0$.
\\\vspace{0.05cm}

In particular Step I, along with the strong version of Bertini's Theorem, implies that either $h^1(\det \tilde{E_1})=0$, or $h^1(\det E_2)=0$. Furthermore, by Corollary \ref{index} and Remark \ref{postino}, at least one of $\Cliff(\tilde{E_1})$ or $\Cliff(E_2)$ is nonnegative.\\\vspace{0.05cm}

\noindent{\em STEP II: If $c_1(\tilde{E_1})^2=0$, then $\tilde{E_1}=\oo_S(\Sigma_1)$ for an irreducible elliptic curve $\Sigma_1$.}

By Step I, we can assume $h^1(\det E_2)=0$ and hence $\Cliff(E_2)\geq 0$. Let $c_1(\tilde{E_1})^2=0$ and write $c_1(\tilde{E_1})=(r_1+h^1(\tilde{E_1}))\Sigma_1$ with $\Sigma_1$ as above. As in the previous step, $\oo_C(\Sigma_1)$ contributes to the Clifford index and $\Cliff(\oo_C(\Sigma_1))\leq\Sigma_1\cdot c_1(\tilde{E_2})-2$. Inequality (\ref{campane}) thus yields:
\begin{align*}
\Cliff(E)-\Cliff(\oo_C(\Sigma_1))\geq-2r_1+2+\frac{1}{2}&c_1(E_2)\cdot c_1(T)-\frac{r_1+h^1(\tilde{E_1})}{2}\,\Sigma_1\cdot c_1(T)+
\\&-h^1(\tilde{E_1})+(r_1+h^1(\tilde{E_1})-1)\Sigma_1\cdot c_1(\tilde{E_2});
\end{align*}
we recall that equality may hold only when $T=0$. Since $\det E_2$ is globally generated and $c_1(\tilde{E_2})=c_1(E_2)+c_1(T)$, one obtains:
\begin{align*}
\Cliff(E)-\Cliff(\oo_C(\Sigma_1))\geq\frac{r_1+h^1(\tilde{E_1})-2}{2}\,&\Sigma_1\cdot c_1(T)+(r_1-1)\left(\Sigma_1\cdot c_1(E_2)-2\right)+\\
&+h^1(\tilde{E_1})(\Sigma_1\cdot c_1(E_2)-1).
\end{align*}
Note that all divisors in $\vert \det E_2\otimes \oo_S(\Sigma_1)\vert$ are numerically $2$-connected; indeed, we have already showed that if $E_2=\oo_S(\Sigma_2)$, then $\Sigma_1\not\sim \Sigma_2$. Hence, $\Sigma_1\cdot c_1(E_2)\geq 2$ and, in order to avoid a contradiction, we conclude that $h^1(\tilde{E_1})=0$ and either $r_1=1$, or $T=0$ and $\Sigma_1\cdot c_1(E_2)=2$. The latter case can be excluded because $C$ is not hyperelliptic.\\\vspace{0.05cm}

Note that Step II implies $\Cliff(\tilde{E_1})\geq 2h^1(\tilde{E_1})$ and $h^1(\det\tilde{E_1})=0$. In particular, the line bundle $\det\tilde{E_1}$ is adapted to $\vert L\vert$ and the linear system $\vert \det \tilde{E_2}\otimes\oo_C\vert$ coincides with the restriction of $\vert \det \tilde{E_2}\vert$ to $C$. Furthermore, $\Cliff(\det\tilde{E_1}\otimes\oo_C)\leq c_1(\tilde{E_1})\cdot c_1(\tilde{E_2})-2$.\\\vspace{0.05cm}

\noindent{\em STEP III: If $c_1(E_2)^2=0$, then $E_2=\oo_S(\Sigma_2)$ for an irreducible elliptic curve $\Sigma_2$.}

The proof is very similar to that of Step II and consists in showing that $\Cliff(E)$ is strictly greater then $\Cliff(\oo_C(\Sigma_2))$ as soon as $r_2\geq 2$. We do not enter into computational details.\\\vspace{0.05cm}

Step III yields $\Cliff(E_2)\geq 0$ and $h^1(\det E_2)=0$. Moreover, one has the inequality \mbox{$\Cliff(\det E_2\otimes\oo_C)\leq c_1(E_1)\cdot c_1(E_2)-2$.}\\\vspace{0.05cm}

\noindent{\em STEP IV: One has $T=0$.}

Since $c_1(\tilde{E_1})\cdot c_1(\tilde{E_2})=c_1(E_1)\cdot c_1(E_2)-c_1(T)\cdot\left(c_1(E_2)-c_1(\tilde{E_1})\right)$, from (\ref{campane}) we obtain:
 \begin{eqnarray*}
\Cliff(E)&\geq& \Cliff(\tilde{E_1})+\Cliff(E_2)- h^1(\tilde{E_1})+\min\{c_1(\tilde{E_1})\cdot c_1(\tilde{E_2}),c_1(E_1)\cdot c_1(E_2)\}-2\\
&\geq&h^1(\tilde{E_1})+\min\{\Cliff(\det\tilde{E_1}\otimes\oo_C),\Cliff(\det E_2\otimes\oo_C)\}.
\end{eqnarray*}
If $T\neq 0$, then the first inequality would be strict, thus contradicting the fact that $\Cliff(C)=\Cliff(E)$.\\\vspace{0.05cm}

We can now rewrite (\ref{primo}) as
\begin{equation}
\Cliff(C)=\Cliff(E_1)+\Cliff(E_2)+\Cliff(\det{E_1}\otimes\oo_C),
\end{equation}
which forces $\Cliff(E_1)=\Cliff(E_2)=0$. In particular, this yields $h^1(E_1)=0$ and $V_2=H^0(E_2)$.\\\vspace{0.05cm}

\noindent{\em STEP V: If condition (*) is satisfied, then the sheaf $E_1$ is a line bundle and either $E_2$ is a line bundle as well, or $E_2=E_{C_2,\omega_{C_2}}$ for a smooth irreducible curve $C_2$ of genus $g_2\geq 2$. The same holds if condition (*) is not satisfied, except in the cases (iii)--(ix) of the statement.}
 
We apply Corollary \ref{index}. First of all, we exclude that $r_2>1$ and $E_2=E_{C_2,(r_2-1)g^1_2}$ for a smooth hyperelliptic curve $C_2$ of genus $g_2>r_2$. In fact, if this occurred, then the stability of $(E_2,H^0(E_2))$ would imply $\rho(g_2,r_2-1,2r_2-2)\geq 0$, hence a contradiction. Thus, either $r_2=1$ or $E_2=E_{C_2,\omega_{C_2}}$ for a smooth curve $C_2$ of genus $g_2\geq 2$. 

Now we remark that the fact that $(E_1,H^0(E_1))$ destabilizes $(E,H^0(E))$, together with the inequality $h^0(E)/(r+1)\geq 2$, prevents $E_1$ from being equal to $E_{C_1,\omega_{C_1}}$ for a smooth curve $C_1$ of genus $g_1=r_1\geq 2$. Assume instead $E_1=E_{C_1,(r_1-1)g^1_2}$ for a hyperelliptic curve $C_1$  of genus $g_1> r_1\geq 2$. In order to conclude the proof of Step V, it is enough to show that this occurs only in the cases (iii)-(ix) of the statement, where condition (*) is not satisfied.

First note that the inequality $h^0(E_1)/r_1\geq h^0(E)/(r+1)\geq 2$ implies $r_1\leq (g_1+1)/2$. 

If $C_1\sim 2B$ for a smooth irreducible curve $B$ of genus $2$, then either $r_1=3$ and $E_1=E_{C_1,\oo_{C_1}(B)}$, or $r_1=2$ and $E_1=\oo_S(B)^{\oplus 2}$ (cf. Example \ref{pisano}). In both cases we have an injective morphism $\oo_S(B)\hookrightarrow E$, hence by Remark \ref{ice} the linear system $\vert A\vert$ is contained in the restriction to $C$ of $\vert\det E_2\otimes\oo_S(B)\vert$. The line bundle $\det E_2\otimes\oo_C(B)$ contributes to the Clifford index and one can easily check that $\Cliff(\det E_2\otimes\oo_C(B))=B\cdot c_1(E_2)$. By the Hodge Index Theorem together with the inequality 
$$
\Cliff(C)=\Cliff(\det E_1\otimes\oo_C)=2B\cdot c_1(E_2)-2\leq B\cdot c_1(E_2),
$$
one of the following occurs:
\begin{itemize}
\item $c_1(E_2)=B$; then, $c_1(L)=3B$ and either $E_2=B$ or $E_2=E_{B,\omega_{B}}$. We have $g=10$ and $A$ is either a complete $g^3_8$ or a complete $g^2_6$; we are in case (iii).
\item $c_1(E_2)^2=0$ and $c_1(E_2)\cdot c_1(B)=2$ as every divisor in $\vert \det E_2\otimes\oo_S(B)\vert$ is numerically $2$-connected; then, Step III implies $E_2=\oo_S(\Sigma)$ for an irreducible elliptic curve $\Sigma\subset S$, and $c_1(L)=\Sigma+2B$. Hence, one has $g=9$ and $A$ is either of type $g^2_6$ or $g^3_8$ depending on $r_1$, that is, case (iv) occurs. 

\end{itemize}

Now, assume the existence of an irreducible elliptic curve $\Sigma$ such that $\Sigma\cdot C_1=2$ and let $C_1\sim D+(n+s)\Sigma$, with $n,s$ and $D$ as in Example \ref{mango}. Since $g_1\geq 3$, then $n+2s\geq 2$. Note that the line bundle $N_1:=\oo_S(D+(n+s-r_1+1)\Sigma)$ appearing in the short exact sequence (\ref{bene}) for $E_1$ is effective since $r_1-1\leq(g_1-1)/2= s+n/2$; we naturally obtain an injective morphism $N_1\hookrightarrow E$, and $\vert A\vert$ is thus contained in $\vert L\otimes N_1^\vee\otimes\oo_C\vert$. We use the inequality 
$$(D+(n+s-1)\Sigma)^2=2n+4s-4=2g_1-6\geq 0.$$
The line bundle $\oo_C(\Sigma)$ contributes to the Clifford index and one can easily check that $\Cliff(\oo_C(\Sigma))\leq\Sigma\cdot c_1(E_2)$. The following inequality is thus straightforward:
\begin{equation}\label{uffi}
0\geq\Cliff(C)-\Cliff(\oo_C(\Sigma))\geq(D+(n+s-1)\Sigma)\cdot c_1(E_2)-2.
\end{equation}
Also note that $c_1(E_2)\cdot D\geq 0$ and $c_1(E_2)\cdot \Sigma> 0$, because $\det E_2$ is globally generated and $c_1(E_2)\neq \Sigma$ (otherwise $C$ would be hyperelliptic). In particular, every divisor in $\vert \det E_2\otimes\oo_S(\Sigma)\vert$ is numerically $2$-connected and hence $c_1(E_2)\cdot \Sigma\geq 2$. Inequality (\ref{uffi}) then implies $n+s\leq 2$ and we only have the following possibilities.
\begin{itemize}
\item $n=0$ and $s=1$: Then, $C_1\sim \Sigma+\Sigma'$ for an elliptic curve $\Sigma'$ such that $\Sigma\cdot\Sigma'=2$; in particular $g_1=3$ and $r_1=2$. Note that $c_1(E_2)\neq\Sigma'$ and $c_1(E_2)\neq\Sigma$ because otherwise $C$ would be hyperelliptic. Therefore, $c_1(E_2)\cdot\Sigma'=2$ and we get equality in (\ref{uffi}) . The line bundle $\oo_C(\Sigma')$ contributes to the Clifford index and the inequality 
$$
2=c_1(E_2)\cdot\Sigma' \geq\Cliff(\oo_C(\Sigma'))\geq \Cliff(C)= \Cliff(\oo_C(\Sigma))=c_1(E_2)\cdot\Sigma
$$
 implies $c_1(E_2)\cdot\Sigma=2$. By applying the Hodge index Theorem to $\det E_1$ and $\det E_2$, we conclude that $c_1(E_2)^2\leq 4$.  If $c_1(E_2)^2=4$, then we have $c_1(E_2)=\Sigma+\Sigma'$. This case can be excluded since it is not compatible with the inequalities \mbox{$2=h^0(E_1)/r_1\geq h^0(E_2)/r_2$} and $d\leq g-1$. The same reason prevents the equality $c_1(E_2)^2=2$. It remains the possibility $c_1(E_2)^2=0$, that is, $c_1(E_2)=\Sigma''$ for an elliptic curve $\Sigma''$ satisfying $\Sigma\cdot\Sigma''=\Sigma'\cdot\Sigma''=2$. This implies $g=7$ (in fact, $C\sim \Sigma+\Sigma'+\Sigma''$) and $A$ is of type $g^2_6$, as in (v).
 \item $n=1$ and $s=0$: This case cannot occur since it contradicts $n+2s\geq 2$.
\item $n=2$ and $s=0$: Hence $C_1\sim D+2\Sigma$ has genus $g_1=3$ and $r_1=2$. Equalities $c_1(E_2)\cdot\Sigma =2$ and $c_1(E_2)\cdot D=0$, obtained from (\ref{uffi}), yield $C_1\cdot C_2=4$, where $C_2=c_1(E_2)$. If $E_2$ is a line bundle, then $h^0(E_2)=2$ because of the inequality $2=h^0(E_1)/r_1\geq h^0(E_2)$. Therefore, $E_2=\oo_S(\Sigma')$ for an irreducible elliptic curve $\Sigma'$, the curve $C\sim D+2\Sigma+\Sigma'$ has genus $7$ and $A$ is of type $g^2_6$, as in (vi). Assume instead that $E_2=E_{C_2,\omega_{C_2}}$ for a smooth curve $C_2$ of genus $g_2\geq 2$; easy computations give $g=g_2+6$ and $d=2g_2+4$, thus contradicting the inequality $d\leq g-1$.
\item $n=1$ and $s=1$: Then, $C_1\sim D+2\Sigma$ and one has $g_1=4$ and $r_1=2$. The Hodge Index Theorem, along with (\ref{uffi}), implies that either $E_2=\oo_S( \Sigma')$ for an irreducible elliptic curve $\Sigma'$ satisfying $\Sigma'\cdot D=0$ and $\Sigma'\cdot\Sigma=2$, or $C_2\sim D+\Sigma$. In the former case, $A$ is of type $g^2_6$ as in (vii). In the latter case, $E_2$ cannot be a line bundle because otherwise the inequality $5/2=h^0(E_1)/r_1\geq h^0(E_2)/r_2$ would be contradicted. Therefore, we obtain $E_2=E_{C_2,\omega_{C_2}}$ for a smooth curve $C_2$ and the line bundle $A$ is of type $g^3_8$, as in (viii).
\item $n=0$ and $s=2$: Hence, $C_1\sim \Sigma'+2\Sigma$ where $\Sigma'$ is an irreducible elliptic curve satisfying $\Sigma\cdot\Sigma'=2$; in particular, one has $g_1=5$ and $r_1\leq 3$. Inequality (\ref{uffi}) forces $\Sigma'\cdot c_1(E_2)=0$ and thus $E_2=\oo_S(\Sigma')$. We get $C\sim 2\Sigma+2\Sigma'$ and $A$ is of type $g^2_6$ or $g^3_8$, depending on $r_1$; in other words, case (ix) occurs.
\end{itemize}\vspace{0.5cm}

We can now conclude the proof of Theorem \ref{duro}. By Lemma \ref{estate}, one falls in case (i) with $M:=\det E_2$ as soon as $E_2=E_{C_2,\omega_{C_2}}$. On the other hand, if both $E_1$ and $E_2$ are line bundles, then trivially $r=1$ and one is in case (ii). Remark \ref{ice} then implies that $\vert A\vert$ is contained in the linear system $\vert\det E_2\otimes \oo_C\vert$, which coincides with the restriction of $\vert \det E_2\vert$ to $C$ by Step II. This concludes the proof.
\end{proof}
\begin{rem}\label{parto}
In case (ii) of Theorem \ref{duro}, assume the line bundle $M$ is very ample. Then, $M$ induces an embedding $S\hookrightarrow \mathbb{P}^n$ and one can easily show that $\vert A\vert$ is cut out by an ($n-2$)-plane which is ($2n-2$)-secant to $C$.
\end{rem}
When $L$ is ample, Theorem \ref{duro} is a refinement of the following result of  Green and Lazarsfeld \cite{green}, that we indeed reobtain as a corollary.
\begin{cor}
If $L\in\Pic(S)$ is ample, then all smooth irreducible curves in $\vert L\vert$ have the same Clifford index. Moreover, if this is strictly less then $[(g-1)/2]$, then there exists a line bundle $M$ on $S$ such that $M\otimes \oo_C$ computes the Clifford index of any curve $C\in\vert L\vert$.
\end{cor}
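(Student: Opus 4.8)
The plan is to transport the problem to the curve of least Clifford index in the system and then invoke Theorem~\ref{duro}. Set $c_0:=\min\{\Cliff(C)\,\vert\,C\in\vert L\vert\ \text{smooth irreducible}\}$, attained at some $C_0$, and let $A_0$ be a complete linear series computing $\Cliff(C_0)=c_0$; replacing $A_0$ by $\omega_{C_0}\otimes A_0^\vee$ if needed we may assume $\deg A_0\leq g-1$, and $A_0$ is automatically primitive because it computes the Clifford index. Writing $A_0$ as a $g^r_d$ one has $\rho(g,r,d)=g-(r+1)h^1(A_0)$ together with $g=r+h^1(A_0)+\Cliff(A_0)$, so that $\rho<0$ is equivalent to $r(h^1(A_0)-1)>\Cliff(A_0)$, while $\deg A_0\leq g-1$ is equivalent to $r+1\leq h^1(A_0)$. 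Over the admissible ranks $1\leq r$ with $r+1\leq h^1(A_0)$ the quantity $r(h^1(A_0)-1)$ is minimised at $r=1$, where it equals $g-c_0-2$; hence the condition $c_0<[(g-1)/2]$, equivalently $g-c_0-2>c_0$, forces $\rho(g,r,d)<0$.

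I would then distinguish two cases. If $c_0=[(g-1)/2]$, then, $[(g-1)/2]$ being the largest Clifford index available in genus $g$ and $c_0$ being the minimum over $\vert L\vert$, every smooth curve of the system has Clifford index exactly $[(g-1)/2]$; this already gives constancy and makes the second assertion vacuous. If instead $c_0<[(g-1)/2]$, then $\rho(g,r,d)<0$ by the previous paragraph, and---granting for now that $C_0$ is neither hyperelliptic nor trigonal and that condition~(*) holds---Theorem~\ref{duro} applies to $(C_0,A_0)$. Its proof (with condition~(*) in force) shows that the first factor $E_1$ of the destabilising extension is a line bundle $N_1$, adapted to $\vert L\vert$ by Step~II, and yields the identity $\Cliff(C_0)=\Cliff(E_1)+\Cliff(E_2)+\Cliff(N_1\otimes\oo_{C_0})$ with $\Cliff(E_1)=\Cliff(E_2)=0$. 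Thus, uniformly in both cases (i) and (ii), I obtain a line bundle $M:=N_1\in\Pic(S)$ adapted to $\vert L\vert$ with $\Cliff(M\otimes\oo_{C_0})=c_0$ (in case (i) one may equally take $M=\det E_2$, for which $M\otimes\oo_{C_0}\simeq A_0$).

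To conclude I would use the two defining features of an adapted line bundle. By condition~(1), $M\otimes\oo_C$ contributes to the Clifford index for every smooth $C\in\vert L\vert$, so $\Cliff(C)\leq\Cliff(M\otimes\oo_C)$; by condition~(2), $\Cliff(M\otimes\oo_C)$ does not depend on $C$ and therefore equals $\Cliff(M\otimes\oo_{C_0})=c_0$. Combining these, $\Cliff(C)\leq c_0$ for all $C\in\vert L\vert$, and since $c_0$ is the minimum every inequality is an equality. This simultaneously proves that $\Cliff(C)\equiv c_0$ is constant on $\vert L\vert$ and that, when $c_0<[(g-1)/2]$, the adapted line bundle $M$ cuts out a series $M\otimes\oo_C$ computing the Clifford index of each $C$.

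The main obstacle is exactly the hypothesis-checking bracketed above. When $C_0$ is hyperelliptic ($c_0=0$) or trigonal ($c_0=1$), Theorem~\ref{duro} does not apply and one must produce $M$ directly from the geometry of $S$: the Saint-Donat classification quoted above shows that the $g^1_2$ (resp.\ the relevant low pencil) is cut out by $\oo_S(\Sigma)$ for an elliptic curve with $\Sigma\cdot C=2$, or by $\oo_S(B)$ for a genus-$2$ curve with $L\simeq\oo_S(2B)$, each of which is manifestly adapted and constant along $\vert L\vert$. When condition~(*) fails one lands in the finitely many configurations (iii)--(ix) of Theorem~\ref{duro}, all with $c_0=2$ and $g\leq 10$, where the explicit line bundles $\oo_S(2B)$, $\oo_S(B+\Sigma)$, $\oo_S(\Sigma+\Sigma'')$, etc.\ can be checked by hand to be adapted with restricted Clifford index $2$. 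Finally, the apparent gap in case (ii)---where Theorem~\ref{duro} only records the containment $\vert A_0\vert\subseteq\vert\det E_2\otimes\oo_{C_0}\vert$, and $\det E_2\otimes\oo_{C_0}$ need not itself compute the index---is precisely why I extract $M=\det E_1$ rather than $\det E_2$ from the proof: it is the factor whose restriction realises $c_0$, and this choice is what makes the conclusion go through for $r=1$ and $r>1$ alike.
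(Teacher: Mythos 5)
Your proposal is correct and follows essentially the same route as the paper: pass to a curve $C_0$ of minimal Clifford index, dispose of the hyperelliptic/trigonal cases via Saint-Donat, apply Theorem \ref{duro} (whose hypothesis $\rho<0$ you rightly verify follows from $c_0<[(g-1)/2]$) to produce a line bundle $M$ adapted to $\vert L\vert$ with $\Cliff(M\otimes\oo_{C_0})=c_0$, and then use the two defining properties of adaptedness to force $\Cliff(C)=c_0$ for every smooth $C\in\vert L\vert$. Your only deviation -- extracting $M=\det E_1$ from the identity $\Cliff(C_0)=\Cliff(E_1)+\Cliff(E_2)+\Cliff(\det E_1\otimes\oo_{C_0})$ inside the proof of Theorem \ref{duro}, rather than taking the $M$ of its statement -- is a harmless streamlining of the paper's ``one can easily check'' step; in fact your worry about case (ii) is unfounded, since the symmetric computation (using $h^1(\det E_1)=h^1(\det E_2)=0$ and Riemann--Roch) shows that $\det E_2\otimes\oo_{C_0}$ computes the Clifford index as well.
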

\begin{proof}
Let $C\in\vert L\vert$ be a curve of minimal Clifford index $\Cliff(C)<[(g-1)/2]$. We can assume that $C$ is neither hyperelliptic nor trigonal because otherwise the result follows directly from \cite{donat}. If $A$ is a complete $g^r_d$ on $C$ such that $d\leq g-1$ and $\Cliff(A)=\Cliff(C)$, then \mbox{$\rho(g,r,d)<0$} and one of the cases (i)--(ix) of Theorem \ref{duro} occurs. In case (i), the statement is straightforward. In all the other situations, Theorem \ref{duro} exhibits a line bundle $M$ on $S$ adapted to $\vert L\vert$ such that $\vert A\vert$ is contained in $\vert M\otimes\oo_C\vert$. One can easily check that $\Cliff(M\otimes\oo_C)=\Cliff(A)$.
\end{proof}

\section{The Donagi-Morrison Conjecture and secant varieties}
In this section, we study the Donagi-Morrison Conjecture. First of all, we exhibit a counterexample to Conjecture \ref{falsa} in the introduction.
\begin{cex}\label{fortuna}
Let $S$ be a $K3$ surface of genus $2$ such that \mbox{$\Pic(S)=\mathbb{Z}[B]$} and $B^2=2$. The vector bundle $E=\oo_S(B)^{\oplus 3}$ is globally generated and satisfies $h^i(E)=0$ for $i=1,2$, hence it coincides with the LM bundle $E_{C,A}$ associated with some primitive line bundle $A$ of type $g^2_6$ on some smooth irreducible curve $C\in\vert 3B\vert$.  The curve $C$ has genus $g=10$ and $\rho(10,2,6)<0$. By Remark \ref{ice}, the linear system $\vert A\vert$ is naturally contained in $\vert\oo_C(2B)\vert$, which is a complete $g^5_{12}$; note that $12>g-1=9$. In order to show that Conjecture \ref{falsa} fails for $A$, it is enough to show that $\vert A\vert$ is not contained in $\vert\oo_C(B)\vert$; since $\oo_C(B)$ is a complete $g^2_6$, this is equivalent to verifying that $A$ and $\oo_C(B)$ are not isomorphic. By Lemma \ref{estate}, if we had $E=E_{C,\oo_C(B)}$, then $E$ would fit in a short exact sequence like (\ref{mentos}) with $N_1=\oo_S(2B)$; this is a contradiction as $\Hom(\oo_S(2B),E)=0$.

Consider the rational map $h_E:G(3,H^0(E))\dashrightarrow \W^2_6(\vert 3B\vert)$ applying a general subspace $\Lambda$ to the pair $(C_\Lambda,A_\Lambda)$, where the evaluation map $ev_\Lambda:\Lambda\otimes\oo_S\to E$ drops rank along $C_\Lambda$ and has $\omega_{C_\Lambda}\otimes A_\Lambda^\vee$ as cokernel. We want to describe geometrically the pairs $(C_\Lambda,A_\Lambda)$ arising in this way. Note that we are in case (iii) of Theorem \ref{duro}.

The line bundle $B$ defines a $2$:$1$ cover $\pi:S\to\mathbb{P}^2$. A curve \mbox{$C\in\pi^*\vert \oo_{\mathbb{P}^2}(3)\vert\subset \vert 3B\vert$} is a $2$:$1$ cover of an elliptic curve $\Gamma$. Three points on $\Gamma$ define a complete $g^2_3$ on $\Gamma$, whose pullback to $C$ gives a complete linear series $A$ of type $g^2_6$; trivially, $A\simeq \oo_C(B)$ if and only if the three points on $\Gamma$ are aligned. The proof of Theorem \ref{duro} shows that, as soon as $A\in\Pic(C)$ is not isomorphic to $\oo_C(B)$, the LM bundle $E_{C,A}$ coincides with $E$, that is, $(C,A)\in \im(ev_\Lambda)$. Note that $\dim\pi^*\vert \oo_{\mathbb{P}^2}(3)\vert=9$ and any curve \mbox{$C\in\pi^*\vert \oo_{\mathbb{P}^2}(3)\vert$} has a 1-dimensional family of $g^2_6$ of the above form, hence the image of $h_E$ has dimension at least $10$. The fiber of $h_E$ over a point $(C_\Lambda,A_\Lambda)\in\im(h_E)$ is the projective space $\mathbb{P}\Hom(E,\omega_{C_\Lambda}\otimes A_\Lambda^\vee)\simeq\mathbb{P}(H^0(E\otimes E^\vee))$ and hence has dimension $8$. Since the Grassmannian $G(3,H^0(E))$ is $18$-dimensional, then $\overline{\im(h_E)}$ coincides with $W^2_6(\pi^*\vert \oo_{\mathbb{P}^2}(3)\vert)$ and moreover $\overline{\im(h_E)}\setminus \im(h_E)=\{(C,\oo_C(B))\,\textrm{s.t.}\, C\in\pi^*\vert \oo_{\mathbb{P}^2}(3)\vert\}$.
\end{cex}
\begin{rem}
One can easily check that in the cases (iii)--(ix) of Theorem \ref{duro} the line bundle $M$, for which $\vert A\vert$ is contained in the restriction of $\vert M\vert$ to $C$, satisfies the inequality $c_1(M)\cdot C>g-1$. As a consequence, by taking a general $K3$ surface $S$ containing a line bundle $L$ as in any of the cases (iii)--(ix), one can construct counterexamples to Conjecture \ref{falsa}.
\end{rem}
As already remarked in the introduction, in the class of counterexamples obtained above, the only contradicted part of Conjecture \ref{falsa} is the inequality \mbox{$c_1(M)\cdot C\leq g-1$}. In fact, our counterexamples still satisfy the modified Conjecture \ref{dm}, where such inequality is replaced by the condition that $M$ is adapted to $\vert L\vert$.

The following proposition reduces Conjecture \ref{dm} to the problem of exhibiting a suitable line bundle $N$ together with an injection $N\hookrightarrow E_{C,A}$.
\begin{prop}\label{carino}
Under the hypotheses of Conjecture \ref{dm}, let $A$ be primitive and assume the existence of a globally generated line bundle $N\in\Pic(S)$ such that $N$ is a saturated subsheaf of $E_{C,A}$. 

Then, Conjecture \ref{dm} holds with $M:=L\otimes N^\vee$ if $h^1(N)=0$, and with $M:=L(-\Sigma)$ if $c_1(N)=k\Sigma$ for an irreducible elliptic curve $\Sigma\subset S$ and an integer $k\geq 2$.
\end{prop}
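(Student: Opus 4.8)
The plan is to extract both conclusions of Conjecture \ref{dm} -- the containment (i) and the Clifford inequality (ii) -- together with the adaptedness of $M$, from the single datum of the injection $N\hookrightarrow E_{C,A}$, the cohomological hypotheses on $N$, and the structural results of Section 2. I would first dispose of the softer parts, (i) and adaptedness. Being globally generated and nontrivial, $N$ satisfies $h^0(N)\geq 2$. If $h^1(N)=0$ I set $M:=L\otimes N^\vee$, so that Remark \ref{ice} directly yields that $\vert A\vert$ is contained in $\vert M\otimes\oo_C\vert$, which coincides with the restriction of $\vert M\vert$ to $C$ because $h^1(N)=0$. If instead $c_1(N)=k\Sigma$ with $k\geq 2$, Bertini's Theorem forces $N=\oo_S(k\Sigma)$; composing the inclusion $\oo_S(\Sigma)\hookrightarrow N$ (given by a section of $\oo_S((k-1)\Sigma)$) with $N\hookrightarrow E_{C,A}$ produces an injection $\oo_S(\Sigma)\hookrightarrow E_{C,A}$ with $h^0(\oo_S(\Sigma))=2$ and $h^1(\oo_S(\Sigma))=0$, so Remark \ref{ice} gives (i) for $M:=L(-\Sigma)$. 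In both cases $L\otimes M^\vee$ equals $N$, respectively $\oo_S(\Sigma)$, so $h^1(L\otimes M^\vee)=0$; hence condition (2) of adaptedness holds, the restriction map $H^0(M)\to H^0(M\otimes\oo_C)$ is an isomorphism, and condition (1) follows from $h^0(M)=h^0(M\otimes\oo_C)\geq h^0(A)\geq 2$ together with $h^0(L\otimes M^\vee)\geq 2$.

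The heart of the matter is (ii), and here I would pass to the quotient $Q:=E_{C,A}/N$. Since $N$ is saturated, $Q$ is torsion free; being a quotient of the globally generated bundle $E_{C,A}$ with $h^2(E_{C,A})=0$, it is globally generated with $h^2(Q)=0$, i.e. $Q$ is a g.LM bundle of type (II). Chern class additivity in $0\to N\to E_{C,A}\to Q\to 0$ gives $c_2(E_{C,A})=c_2(Q)+c_1(N)\cdot c_1(Q)$, which I rewrite as
$$
\Cliff(A)=\Cliff(Q)+c_1(N)\cdot c_1(Q)-2.
$$
On the other side, restricting $M$ to $C$ via $0\to M\otimes L^\vee\to M\to M\otimes\oo_C\to 0$ and using that $L\otimes M^\vee$ is effective and nonzero (so $h^0(M\otimes L^\vee)=0$) together with $h^0(M^\vee)=0$, I obtain $h^0(M\otimes\oo_C)\geq h^0(M)\geq\chi(M)=2+c_1(M)^2/2$, whence the uniform bound
$$
\Cliff(M\otimes\oo_C)\leq c_1(M)\cdot c_1(L\otimes M^\vee)-2.
$$

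Combining the two estimates in each case closes the argument up to one point. When $M=L\otimes N^\vee$ one has $c_1(M)=c_1(Q)$ and $c_1(L\otimes M^\vee)=c_1(N)$, so the bound reads $\Cliff(M\otimes\oo_C)\leq c_1(N)\cdot c_1(Q)-2=\Cliff(A)-\Cliff(Q)$. When $M=L(-\Sigma)$ one has $c_1(L\otimes M^\vee)=\Sigma$ and $c_1(M)\cdot\Sigma=c_1(Q)\cdot\Sigma$ (since $\Sigma^2=0$), so the bound reads $\Cliff(M\otimes\oo_C)\leq c_1(Q)\cdot\Sigma-2$, while the additivity formula gives $\Cliff(A)=\Cliff(Q)+k\,\Sigma\cdot c_1(Q)-2$, and the difference equals $\Cliff(Q)+(k-1)\,\Sigma\cdot c_1(Q)$. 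In either case $\Cliff(M\otimes\oo_C)\leq\Cliff(A)$ follows as soon as $\Cliff(Q)\geq 0$ and $\Sigma\cdot c_1(Q)\geq 0$; the latter holds because $\det Q$ is globally generated, hence nef.

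The main obstacle is therefore to secure $\Cliff(Q)\geq 0$. When $c_1(Q)^2>0$ this is exactly Corollary \ref{index}, and the proof concludes. The delicate case is $c_1(Q)^2=0$, where Corollary \ref{index} fails and indeed $\Cliff(Q)=-2(\rk Q-1)$ may be negative; here I would invoke Proposition \ref{elliptic}, which forces $Q$ to be locally free and globally generated with $c_2(Q)=0$ and, once the trivial part is removed, a sum of copies of $\oo_S(\Sigma')$ for an elliptic curve $\Sigma'$. In this degenerate situation the crude restriction bound above is no longer sharp, and one must either compute $\Cliff(M\otimes\oo_C)$ directly from the explicit shape of $Q$ or replace $M$ by the adapted bundle attached to a single elliptic factor -- precisely as $L(-\Sigma)$ replaces $L\otimes N^\vee$ in the second case -- and re-verify (i), (ii) and adaptedness for this refined choice. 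This elliptic degeneration is where I expect the bulk of the case analysis to concentrate.
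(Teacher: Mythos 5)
Your setup reproduces the paper's argument almost step for step: the same choices of $M$, the same passage to the saturated quotient $Q=E_{C,A}/N$ (a g.LM bundle of type (II)), the same additivity formula $\Cliff(A)=\Cliff(Q)+c_1(N)\cdot c_1(Q)-2$, and the same reduction of (ii) to the positivity of $\Cliff(Q)$. But the proof is not complete: the case $c_1(Q)^2=0$, which you correctly isolate as the only delicate one, is exactly where the paper does its real work, and you leave it as a programme (``one must either compute \dots or replace \dots'') rather than an argument. The paper closes this case by a direct computation. Writing $Q=\oo_S(\Sigma')^{\oplus r}$ (Proposition \ref{elliptic}, legitimate because $h^1(Q)=0$ follows from $h^1(E_{C,A})=h^2(N)=0$), one shows that $h^0(\det Q\otimes\oo_C)=r+1$ \emph{exactly} -- this uses $h^1(N)=0$, which kills the cokernel of the restriction map $H^0(\oo_S(r\Sigma'))\to H^0(\oo_C(r\Sigma'))$ -- whence
$$
\Cliff(\det Q\otimes\oo_C)=r\,\Sigma'\cdot c_1(N)-2r=\Cliff(Q)+c_1(N)\cdot c_1(Q)-2=\Cliff(A),
$$
so (ii) holds with equality; and in the case $c_1(N)=k\Sigma$ the required inequality $r\,\Sigma\cdot\Sigma'-2\leq rk\,\Sigma\cdot\Sigma'-2r$ follows from numerical $2$-connectedness of $\Sigma+\Sigma'$ (giving $\Sigma\cdot\Sigma'\geq 2$) together with $k\geq 2$. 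Neither computation appears in your text, and your sufficient condition $\Cliff(Q)\geq 0$ genuinely fails here ($\Cliff(Q)=-2(r-1)<0$), so the gap cannot be waved away.

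Moreover, one of your two proposed fallbacks is wrong. In the case $h^1(N)=0$ with $Q=\oo_S(\Sigma')^{\oplus r}$, $r\geq 2$, you suggest replacing $M=\det Q=\oo_S(r\Sigma')$ by the ``single elliptic factor'' $\oo_S(\Sigma')$. But then part (i) fails for degree reasons: $\deg A=c_2(E_{C,A})=r\,\Sigma'\cdot c_1(N)$, while a divisor cut on $C$ by a member of $\vert\Sigma'\vert$ has degree $\Sigma'\cdot C=\Sigma'\cdot c_1(N)$, which is strictly smaller since $\Sigma'\cdot C>0$; so $\vert A\vert$ cannot be contained in the restriction of $\vert\oo_S(\Sigma')\vert$ to $C$. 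The correct resolution is to keep $M=\det Q$ -- which is still adapted to $\vert L\vert$ because $h^1(L\otimes M^\vee)=h^1(N)=0$, even though $h^1(M)=r-1>0$ -- and to verify (ii) by the sharp computation above. In short, you identified where the difficulty sits but did not resolve it, and the one concrete escape route you sketch does not work.
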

\begin{proof}
If $h^1(N)=0$, then $M:=L\otimes N^\vee$ is adapted to $\vert L\vert$ and, by Remark \ref{ice}, $\vert A\vert$ is contained in the restriction of $\vert M\vert$ to $C$; it only remains to verify the inequality on the Clifford indices. Since $N\subset E:=E_{C,A}$ is saturated, the quotient $E/N$ is a g.LM bundle of type (II). A trivial computation gives:
$$
\Cliff(A)=c_1(N)\cdot c_1(E/N)+\Cliff(E/N)-2.
$$
If $h^1(\det E/N)=0$, then $\Cliff(E/N)\geq 0$ and hence:
$$\Cliff(\det E/N\otimes\oo_C)=c_1(N)\cdot c_1(E/N)-2\leq \Cliff(A).$$
If instead $h^1(\det E/N)\neq 0$, use the vanishing of $h^1(E/N)$ along with Proposition \ref{elliptic} and write $E/N=\oo_S(\Sigma')^{\oplus r}$  with $\Sigma'$ being an irreducible elliptic curve. The equality $h^0(\det E/N\otimes\oo_C)=r+1$ thus yields $$\Cliff(\det E/N\otimes \oo_C)=c_1(N)\cdot c_1(E/N)-2r=\Cliff(A).$$

Assume now $h^1(N)>0$, that is, $c_1(N)=k\Sigma$ with $k\geq 2$ and $\Sigma$ as in the statement. One obtains an injective morphism $\oo_S(\Sigma)\hookrightarrow E$ (that is not saturated). Since $C$ is not linearly equivalent to a multiple of $\Sigma$, then $L(-\Sigma)= \det E/N((k-1)\Sigma)$ satisfies $h^1(L(-\Sigma))=0$. Therefore, $L(-\Sigma)$ is adapted to $\vert L\vert$ and $\vert A\vert$ is contained in the restriction of $\vert L(-\Sigma)\vert$ to $C$. It remains to prove the following inequality:
\begin{equation}\label{pacchi}
\Cliff(\oo_C(\Sigma))=\Sigma\cdot \det E/N-2\leq \Cliff(E/N)+k(\Sigma\cdot \det E/N)-2=\Cliff(A).
\end{equation}
If $h^1(\det E/N)=0$, this is trivial since $\Cliff(E/N)\geq 0$. If instead $h^1(E/N)=0$, then as above $E/N=\oo_S(\Sigma')^{\oplus r}$ and $\Sigma'\not\sim\Sigma$. Inequaliy (\ref{pacchi}) becomes
$$
\Cliff(\oo_C(\Sigma))=r\Sigma\cdot \Sigma'-2\leq r.k(\Sigma\cdot \Sigma')-2r=\Cliff(A);
$$
this is satisfied because $\Sigma+\Sigma'$ is numerically $2$-connected and hence $\Sigma\cdot\Sigma'\geq 2$.
\end{proof}
In Donagi-Morrison's proof of the conjecture for $r=1$ (cf. \cite{donagi}), the line bundle $N$ is obtained as the kernel of an endomorphism of $E_{C,A}$ dropping the rank everywhere. In the case $r=2$ (cf. \cite{ciotola}), the role of the line bundle $N$ is played by the  maximal destabilizing sheaf $E_1$ of $E_{C,A}$ w.r.t. $\mu_L$-stability. In fact, a case-by-case analysis of all the possible types of Harder-Narasimhan and Jordan-H\"older filtrations shows that the condition $\rho(g,2,d)<0$ forces $E_1$ to have rank $1$ and the quotient $E/E_1$ to be $\mu_L$-stable. Unfortunately, these proofs cannot be extended to the general case.

We can now prove Theorem \ref{chiana}.
 \begin{proof}[Proof of Theorem \ref{chiana}]
 Up to replacing $A$ with $A+bs(\omega_{C\otimes A^\vee})$, we can assume that $A$ is primitive, that is, the LM bundle $E:=E_{C,A}$ is globally generated. Look at the maximal destabilizing sequence of $(E,H^0(E))$:
 $$
 0\to (E_1,H^0(E_1))\to (E,H^0(E))\to (E_2,V_2)\to 0,
 $$
 where $E_2$ is a g.LM bundle of type (II) and rank $r_2$, while $E_1$ is generically generated by its global sections and has rank $r_1$. Up to replacing $(E_1,H^0(E_1))$ with a stable coherent subsystem appearing in its Jordan-H\"older filtration, we can assume that $(E_1,H^0(E_1))$ is stable. Since $S$ does not contain any ($-2$)-curve, we can also assume that $E_1$ is a g.LM bundle of type (I); indeed, if this were not the case, Proposition \ref{generated} would then provide a nef line bundle $N\hookrightarrow E_1$ and, after taking the saturation of $N\subset E$, the statement would be a straight consequence of Proposition \ref{carino}.  By \cite[Lemma 3.3]{ciotola}, both $\det E_1$ and $\det E_2$ are globally generated.
 
 Note that the conjecture follows directly from Proposition \ref{carino} as soon as $r_1=1$.

Let now $r_1>1$. If $c_1(E_1)^2=0$, Proposition \ref{elliptic} yields the existence of an irreducible elliptic curve $\Sigma_1\subset S$ along with a short exact sequence
 $$
 0\to\oo_S^{\oplus h^1(E_1)}\to \oo_S(\Sigma_1)^{\oplus h^1(E_1)+r_1}\to E_1\to 0,
 $$
 and hence $\oo_S(\Sigma_1)\hookrightarrow E_1$. Therefore, $\oo_S(\Sigma_1)$ is a subsheaf of $E$ and, after replacing it with its saturation, Conjecture \ref{dm} follows from Proposition \ref{carino}.

It remains to deal with the case $c_1(E_1)^2>0$, which yields $h^1(\det E_1)=0$. By Proposition \ref{general}, given a general subspace \mbox{$V_1\in G(r_1-1, H^0(E_1))$}, the degeneracy locus of the map $ev_1:V_1\otimes\oo_S\to E_1$ is a $0$-dimensional subscheme $\xi$ consisting of some distinct points which do not lie in $\mathrm{Sing}(E_2)$. We obtain the following diagram:
 $$
\xymatrix{
&&0&0&0\\
&0\ar[r]&E_2\ar[r]\ar[u]&E_2'\ar[r]\ar[u]&\kappa\ar[r]\ar[u]&0\\
0\ar[r]& V_1\otimes\oo_S\ar[r]&E\ar[r]\ar[u]&E'\ar[r]\ar[u]&\oo_\xi\oplus\kappa\ar[r]\ar[u]&0\\
0\ar[r]& V_1\otimes\oo_S\ar[r]\ar@{=}[u]&E_1\ar[r]\ar[u]&\det E_1\ar[r]\ar[u]&\oo_\xi\ar[r]\ar[u]&0.\\
&&0\ar[u]&0\ar[u]&0\ar[u]\\
}
$$
The sheaf $\kappa$ is $0$-dimensional and its support is contained in $\mathrm{Sing}(E_2)$. By construction, $E'$ is locally free and $E_2'$ has no torsion. Since $h^1(E_2)=h^2(E_1)=0$, then $E_2'$ is globally generated and satisfies $h^i(E_2')=0$ for $i=1,2$, i.e., it is a g.LM bundle of type (II). Since both $\det E_1$ and $E_2'$ are globally generated, the same holds for $E'$; moreover, $E'$ fulfills the condition $h^i(E')=0$ for $i=1,2$, hence it is the LM bundle associated with a primitive line bundle on a smooth curve in the linear system $\vert L\vert$. 

The image of $H^0(E)/V_1$ in $H^0(E')$, that we denote by $V'$, generates the bundle $E'$ outside of $\xi\cup\mathrm{Supp}(\kappa)$. Furthermore, $H^0(E')/V'$ surjects onto $H^0(\kappa)$ because both $h^1(E_2)=0$ and $h^1(\det E_1)=0$. If $V_2\in G(\rk E',V')$ is general, then the evaluation map $ev_2:V_2\otimes\oo_S\to E'$ is injective and we get a diagram:
$$
\xymatrix{
&&0&0\\
&0\ar[r]&B_1\ar[r]\ar[u]&B_1'\ar[r]\ar[u]&\oo_\xi\oplus\kappa\ar[r]&0\\
0\ar[r]&V_1\otimes\oo_S\ar[r]& E\ar[r]\ar[u]&E' \ar[u]\ar[r]&\oo_\xi\oplus\kappa\ar[r]\ar@{=}[u]&0;\\
0\ar[r]&V_1\otimes\oo_S\ar[r]\ar@{=}[u]& (V_1\oplus V_2)\otimes\oo_S\ar[r]\ar[u]& V_2\otimes\oo_S\ar[r]\ar[u]&0&\\
&&0\ar[u]&0\ar[u]
}
$$
here, $B_1$ and $B'_1$ are pure sheaves of dimension $1$ supported on a curve $X\in\vert L\vert$, which is integral but possibly singular along $\mathrm{Supp}(\kappa)$. Both $B_1$ and $B_1'$ are globally generated and, as sheaves on $X$, they have rank $1$ and no torsion. One can show that $H^0(B_1')$ surjects onto $H^0(\kappa)$. We define $B:= \mathcal{E}xt^1(B_1,\oo_S)$ and $B':=\mathcal{E}xt^1(B_1',\oo_S)$. It turns out that $H^0(B)\simeq H^1(B_1)^\vee$ and $H^0(B')\simeq H^1(B_1')^\vee$ (cf. \cite[Lemma 2.3]{gomez}), hence $V_1\oplus V_2\simeq H^0(B)^\vee$ and $V_2\simeq H^0(B')^\vee$; in particular, one has $h^0(B)=r+1$ and $h^0(B')=r_2+1$. By dualizing the second column in the diagram, one easily shows that $E_{X,B}= E$.
Look at the diagram:
$$
\xymatrix{
&&0&0\\
&&\oo_\xi\ar[r]^-\simeq\ar[u]&\mathcal{E}xt^2(\oo_{\xi},\oo_S)\ar[u]\\
0\ar[r]&B'\ar[r]&B\ar[r]\ar[u]&\mathcal{E}xt^2(\oo_{\xi}\oplus\kappa,\oo_S)\ar[u]\ar[r]&0\\
0\ar[r]&B'\ar[r]\ar[u]^\simeq&B(-\xi)\ar[r]\ar[u]&\mathcal{E}xt^2(\kappa,\oo_S)\ar[u]\ar[r]&0.\\
&&0\ar[u]&0\ar[u]
}
$$

Note that $H^0(\mathcal{E}xt^2(\kappa,\oo_S))\simeq H^0(\kappa)^\vee\hookrightarrow H^0(B_1')^\vee\simeq H^1(B')$ (cf. \cite[p. 39]{friedman} for the first isomorphism). Hence, we have $H^0(B')\simeq H^0(B(-\xi))$ and $\xi\in V^{e-f}_e(B)$ for $e=c_2(E_1)$ and $f=c_2(E_1)-r_1+1$. Easy computations give
\begin{equation}\label{end}
\mathrm{expdim}\,V^{e-f}_e(B)=-r_2c_2(E_1)+(r_2+1)(r_1-1)\leq -(r_1-1)(r_2-1),
\end{equation}
where the inequality follows from the fact that $\Cliff(E_1)\geq 0$. Therefore, the variety $V^{e-f}_e(B)$ has negative expected dimension unless $r_1=r$ (or equivalently, $r_2=1$) and $\Cliff(E_1)=0$; we can exclude this case by applying Corollary \ref{index}. Indeed, the bundle $E_1$ cannot be the LM bundle $E_{C_1,\omega_{C_1}}$ for a smooth curve $C_1$ of genus $g_1\geq 2$ because this is not compatible with inequalities $h^0(E_1)/r_1\geq h^0(E)/(r+1)\geq 2$. Analogously, if we had $E_1=E_{C_1,(r_1-1)g^1_2}$ for a hyperelliptic curve $C_1$ of genus $g_1>r_1$, then inequality $\rho(g_1,r_1-1,2r_1-2)< 0$ would hold thus contradicting the stability of $(E_1,H^0(E_1))$.

In summary, as soon as $r_1>1$, we have found a pair $(X,B)$ as above such that $\mathrm{expdim}\,V^{e-f}_e(B)<0$; as a consequence, the initial pair $(C,A)$ has some unexpected secant varieties up to deformation, in contradiction with the hypotheses. Hence, the only possibility is $r_1=1$ and this concludes the proof.
\end{proof}

We conclude with two remarks concerning the condition of {\em having some unexpected varieties up to deformation}. By definition, this involves integral curves $X\in\vert L\vert$ which might be singular and sheaves $B\in \overline {J}^d(X)$. First, we explain how to deform such a pair $(X,B)$ to another pair $(\tilde{C},\tilde{A})$ such that $\tilde{C}\in\vert L\vert$ is smooth and $\tilde{A}\in\Pic^d(\tilde{C})$ has some special properties.
\begin{rem}\label{deformo}
We will show that, if $(C,A)$ has some unexpected secant varieties up to deformation, then it can be deformed to a pair $(\tilde{C},\tilde{A})$ such that $\tilde{C}\in\vert L\vert$ is smooth and irreducible and $\tilde{A}\in\Pic^d(\tilde{C})$ fulfills either condition
\begin{itemize}
\item[(i)] $h^0(\tilde{A})>h^0(A)=r+1$
\end{itemize}
or condition
\begin{itemize}
\item[(i')] $V^{e-h}_e(\tilde{A})\neq \emptyset$ and $\mathrm{expdim}\,V^{e-h}_e(\tilde{A})<0$ for some integers $0\leq h<e$.
\end{itemize}

Note that condition (i) implies that the line bundle $\tilde{A}$ is of type $g^{s}_d$ with $s>r$; in particular, one has $\rho(g,s, d)<\rho(g,r,d)<0$. 

The construction of the pair $(\tilde{C},\tilde{A})$ uses the results in \cite{gomez}. By assumption, there is an integral curve $X\in \vert L\vert$ together with a sheaf $B\in \overline {J}^d(X)$ that satisfies $h^0(B)=r+1$  and sits in a short exact sequence
\begin{equation}\label{uffino}
0\to B'\to B\stackrel{q}{\longrightarrow} Q\to 0,
\end{equation}
where $q\in V_e^{e-f}(B)$ for some integers $e,f$ such that $\mathrm{expdim}\,V^{e-f}_e(B)<0$. We may assume $f=h^0(B')-h^0(B)+e$.

By \cite[Proposition 2.5 and Section 3]{gomez}, there exist a family of curves $\mathcal{C}$ parametrized by a connected (but not necessarily complete) curve $T$, a connected (possibly neither irreducible nor complete) curve $Y$ together with a map $p:Y\to T$, and a sheaf $\mathcal{B}'$ on $S\times Y$ flat over $Y$ such that:
\begin{itemize}
\item[(a)] $\mathcal{C}_{t_0}\simeq X$ for some $t_0\in t$, and for every $t\neq t_0$ the curve $\mathcal{C}_t\in \vert L\vert$ is smooth and irreducible;
 \item[(b)] there is a component $Y_1$ of $Y$ such that $p\vert_{Y_1}$  is a finite cover, and $p$ contracts all the other components of $Y$ to $t_0$;
\item[(c)] for all $y\in Y$ the sheaf $B'_y$ induced by $\mathcal{B}'$ on the fiber of $S\times Y\to Y$ over $y$ is a torsion free sheaf of rank $1$ on $\mathcal{C}_{p(y)}$, and $B'_{y_0}\simeq B'$ for some $y_0\in p^{-1}(t_0)$;
\item[(d)] $h^0(B'_y)\geq h^0(B')$ for all $y\in Y$.
\end{itemize}

By \cite[Proposition 3.6]{gomez}, up to replacing $Y$ with another curve $Y'$ (which maps to $Y$ and satisfies an analogue of (b))  and $\mathcal{B}'$ with its pullback to $S\times Y'$, we find sheaves $\mathcal{B}$ and $\mathcal{Q}$ on $S\times Y$ flat over $Y$ sitting in an exact sequence
\begin{equation}
0\to \mathcal{B}'\to \mathcal{B}\to \mathcal{Q}\to 0;
\end{equation}
this induces, for all $y\in Y$, a short exact sequence
$$
0\to B'_y\to B_y\to Q_y\to 0,
$$
which is equivalent to (\ref{uffino}) when $y=y_0$. Set $\tilde{C}:=\mathcal{C}_{t_1}$ for a general $t_1\in T$ and $\tilde{A}:=B_{y_1}$ for some $y_1\in p^{-1}(t_1)$; then, $B'_{y_1}=\tilde{A}(-D)$ for an effective divisor $D\in\tilde{C}_e$. If $h^0(\tilde{A})>h^0(B)$, then we are in case (i). If instead $h^0(\tilde{A})\leq h^0(B)$, then $D\in V^{e-h}_e(\tilde{A})$ for $h=e-h^0(\tilde{A})+h^0(\tilde{A}(-D))$. Since $h\geq f$, we get
$$
\mathrm{expdim}\,V^{e-h}_e(\tilde{A})=e-h\cdot h^0(\tilde{A}(-D))\leq e-f\cdot h^0(B')= \mathrm{expdim}\,V^{e-f}_e(B)<0,
$$ 
and condition (i') is satisfied.
\end{rem}
Next remark explains the implications of our condition in terms of points on the surface $S$.
\begin{rem}
As usual, let $A$ be a primitive $g^r_d$ on a curve $C\subset S$ and set $E:=E_{C,A}$. Then, for a general subspace $W\in G(r,H^0(A)^\vee)\subset G(r, H^0(E))$ we obtain a short exact sequence:
$$
0\to W\otimes \oo_S\to E\to L\otimes I_\zeta\to 0,
$$
where $I_\zeta$ is the ideal sheaf of a $0$-dimensional subscheme of $S$ of length $d=c_2(E)$. Assume that $L$ is very ample, i.e., it gives an embedding $S\hookrightarrow \mathbb{P}^{h^0(L)-1}=\mathbb{P}^g$. Since $h^0(L\otimes I_\zeta)=g-d+r+1$, then $\zeta$ is cut out by a ($d-r-1$)-plane which is $d$-secant to the surface $S$. 

Now, assume that $(C,A)$ has some unexpected secant varieties up to deformation and let $(X,B)$ be the pair given by the definition. As above, the subspaces lying in the Grassmannian $G(r,H^0(B)^\vee)\subset G(r, H^0(E))$ correspond to an $r$-dimensional family of $d$-secant ($d-r-1$)-planes to $S\subset\mathbb{P}^g$. One can show that the non-emptiness of $V^{e-f}_e(B)$ is equivalent to the requirement that at least one of these planes contains a ($d-r-1-f$)-plane which is ($d-e$)-secant to $S$. 

An interpretation of the condition $\mathrm{expdim}\,V^{e-f}_e(B)<0$ in terms of points on $S$ would also be interesting.
\end{rem}

\pagestyle{myheadings}
\markboth{ANDREAS LEOPOLD KNUTSEN,\, MARGHERITA LELLI-CHIESA}{}
\newpage

\appendice{ A counterexample to Conjecture \ref{dm}}
\begin{center}
{\normalsize
ANDREAS LEOPOLD KNUTSEN,\, MARGHERITA LELLI-CHIESA
\par
}
\end{center}\vspace{0.5cm}

In this Appendix we provide counterexamples to Conjecture \ref{dm} and show that the hypothesis in Theorem \ref{chiana} concerning secant varieties cannot be avoided. 

Let $S$ be a $K3$ surface of even genus $p\geq 2$ such that \mbox{$\Pic(S)=\mathbb{Z} [H]$} with $H^2=2p-2$. Lazarsfeld's Theorem (cf. \cite{lazarsfeld}) implies that every smooth irreducible curve $H_0\in\vert H\vert$ has gonality $k=(p+2)/2$. By \cite[Theorem 3]{mukai1}, the LM bundle $E:=E_{H_0,B_0}$ associated with a complete linear series $B_0$ of type $g^1_k$ on a smooth curve $H_0\in\vert H\vert$ does not depend on the choice of the pair $(H_0,B_0)$. Furthermore, $E$ is both globally generated and stable because the Picard group of $S$ is cyclic and $\Hom(\oo_S(H),E)=0$. The vector bundle $E\oplus E$ is generated by its global sections and satisfies $h^i(E\oplus E)=0$ for $i=1,2$, hence it coincides with the LM bundle associated with a complete and primitive linear series $A$ of type $g^3_{3p}$ on a smooth irreducible curve $C\in \vert2 H\vert$, which has genus $g:=4p-3$. Notice that $3p\leq g-1$ as soon as $p\geq 4$. An easy computation gives $\rho(g,3,3p)=-3$ and we claim that both Conjectures \ref{falsa} and \ref{dm} fail for the pair $(C,A)$. Notice that the only sheaf which might play the role of the line bundle $M$ appearing in the statement of the conjectures is $\oo_S(H)$. The condition that the linear system $\vert A\vert$ is contained in the restriction of $\vert H\vert$ to $C$ implies the inequality $h^0(A^\vee \otimes \oo_C(H))>0$. Tensoring by $\oo_S(-H)$ the short exact sequence
$$
0\to H^0(A)^\vee\otimes \oo_S\to E\oplus E\to \oo_C(2H)\otimes A^\vee\to 0,
$$
one finds that $h^0(A^\vee \otimes \oo_C(H))=2h^0(E(-H))=0$ and this proves our claim.

Now we show that the pair $(C,A)$ has some unexpected secant varieties up to deformation; as a consequence, the result of Theorem \ref{chiana} is as good as possible. Take a general section $\alpha\in H^0(E)$ and consider the diagram:
 $$
\xymatrix{
&&0&0&\\
&&E\ar@{=}[r]\ar[u]&E\ar[u]\\
0\ar[r]& \oo_S\ar[r]&E\oplus E\ar[r]\ar[u]& E'\ar[r]\ar[u]& \oo_\xi\ar[r]&0\\
0\ar[r]& \oo_S\ar[r]^\alpha\ar@{=}[u]&E\ar[r]\ar[u]&\oo_S(H)\ar[r]\ar[u]&\oo_\xi\ar[r]\ar@{=}[u]&0,\\
&&0\ar[u]&0\ar[u]&\\
}
$$
where $\xi\subset S$ is a $0$-dimensional subscheme consisting of $k$ distinct points and $E'$ is a rank $3$ vector bundle. As in the proof of Theorem \ref{chiana}, one finds a deformation $(\tilde{C},\tilde{A})$ of the pair $(C,A)$ such that $E\oplus E=E_{\tilde{C},\tilde{A}}$ and $E'=E_{\tilde{C}, \tilde{A}(-\xi)}$; since $\xi$ is curvilinear, one can choose $\tilde{C}$ to be smooth. The equalities $h^0(\tilde{A}(-\xi))=3=h^0(\tilde{A})-1$ prevent $\tilde{A}$ from being very ample; in other words, one has $\xi\in V^1_{(p+2)/2}(\tilde{A})$ and can easily check that $\mathrm{expdim}\, V^1_{(p+2)/2}(\tilde{A})=-p+1<0$.
\begin{rem}
The above counterexample contradicts part (i) of Conjecture \ref{dm}, that is, $A$ is not contained in the restriction of $\vert M\vert$ to $C$ for any line bundle $M$ which is adapted to $\vert 2H\vert$.
\end{rem}
\begin{rem}
Similarly, one can construct counterexamples to Conjecture \ref{dm} in cases where the line bundle \mbox{$L=\oo_S(C)$} is primitive. For instance, take a $K3$ surface $S$ with \mbox{$\Pic(S)=\mathbb{Z}[H]\oplus \mathbb{Z}[D]$} where $H^2=2r-2\geq 4$, $D^2=2p-2\geq 4$ and $H\cdot D=d$ such that both $p$ and $r$ are even and $p+r-d=0$. Necessary and sufficient conditions on $r$ and $p$ for the existence of such an $S$ with $H$ very ample and $D$ smooth and irreducible are given in \cite{andreas}. However, for our purposes it is enough just to assume that the integers $r$, $p$ are such that there are no divisors of self-intersection $0$ or $-2$ in the lattice; such a $K3$ surface exists as explained in \cite{andreas} (since the inequality $d^2-4(r-1)(p-1)>0$ is satisfied because of the condition $p+r-d=0$) and $D$ and $H$ are automatically very ample. By the results in \cite{arap}, the integers $r$, $p$, can be chosen in infinitely many ways so that the linear system $\vert H\vert$ does not contain any reducible or non-reduced member (we leave the details to the reader) and similarly for $\vert D\vert$ by symmetry. In particular, a general curve $H_1\in\vert H\vert$ has maximal gonality $k_1=(H^2+6)/4$, and there are no injective morphisms $N\hookrightarrow E_1$ from an effective line bundle $N$ to the LM bundle $E_1:=E_{H_0,A_0}$ associated with a linear series $A_0$ of type $g^1_{k_1}$ on $H_0$, and analogously for $\vert D\vert$. We set $k_2=(D^2+6)/4$ and denote by $E_2$ the LM bundle associated with a $g^1_{k_2}$ on a general curve $D_2\in\vert D\vert$.

Consider the vector bundle $E_1\oplus E_2$; this coincides with the LM bundle associated with a primitive linear series $A$ of type $g^3_{k_1+k_2+d}$ on a smooth irreducible curve \mbox{$C\in\vert H+D\vert$}. Trivial computations give $$\rho(g(C),3, k_1+k_2+d)=-1.$$ Since there are no morphisms from an effective line bundle $N$ to the LM bundle \mbox{$E_1\oplus E_2$}, one shows, arguing as above, that Conjecture \ref{dm} fails for the pair $(C,A)$.
\end{rem}

\vfill

\end{document}